\newtheorem{thm}{Theorem}[section]
\newtheorem{lem}[thm]{Lemma}
\newtheorem{prop}[thm]{Proposition}
\newtheorem{maintheo}{{\sc Theorem}}
 \newtheorem{maincor}[maintheo]{{\sc Corollary}}
\newtheorem{maindefn}[maintheo]{{\sc Definition}}
\theoremstyle{definition}
\newtheorem{defn}{Definition}[section]
\newtheorem{conj}{Conjecture}
\newtheorem{rem}{Remark}[section]
\theoremstyle{remark}
\theoremstyle{plain}
\newcommand{\Z}{{\mathbf Z}}
\newcommand{\ccal}{{\mathcal C}}
\newcommand{\fcal}{{\mathcal F}}
\newcommand{\hcal}{{\mathcal H}}
\newcommand{\dcal}{{\mathcal D}}
\newcommand{\lcal}{{\mathcal L}}
\newcommand{\rcal}{{\mathcal R}}
\newcommand{\kcal}{{\mathcal K}}
\newcommand{\wcal}{{\mathcal W}}
\newcommand{\scal}{{\mathcal S}}
\newcommand{\X}{{\mathbf X_{\Gamma}}}
\newcommand{\D}{\mathbf D}
\newcommand{\R}{{\mathbf R}}
\newcommand{\C}{{\mathbf C}}
\newcommand{\Lap}{\triangle}
\newcommand{\To}{\longrightarrow}
\newcommand{\Tr}{\operatorname{Tr}}
\newcommand{\Vol}{\operatorname{Vol}}
\newcommand{\hs}{\operatorname{HS}}
\newcommand{\half}{\frac{1}{2}}
\newcommand{\Op}{\operatorname{Op}}
\newcommand{\la}{\langle}
\newcommand{\ra}{\rangle}
\newcommand{\defi}{\stackrel{\rm def}{=}}
\providecommand{\norm}[1]{\lVert#1\rVert}
\def\1{{{\mbox{${\mathrm{1\negthinspace\negthinspace I}}$}}}}
\numberwithin{equation}{section}
\begin{document}

\title[Geodesic flow and Schr\"odinger flow on hyperbolic surfaces]
{Intertwining the geodesic flow and the Schr\"odinger group on hyperbolic surfaces}

\author{Nalini Anantharaman and  Steve Zelditch}
\address{Laboratoire de Math\'ematique, Universit\'e d'Orsay Paris XI, 91405 Orsay Cedex, France}
\address{Department of Mathematics, Northwestern University,
Evanston IL,  60208-2730, USA}

\thanks{Research partially supported by NSF grant
  DMS-0904252. NA wishes to acknowledge the support of Agence Nationale de la Recherche,
under the grants ANR-09-JCJC-0099-01 and ANR-07-BLAN-0361.}

\maketitle


\begin{abstract} We construct an explicit intertwining operator
$\lcal$ between the Schr\"odinger group $e^{ it \frac\Lap2} $ and
the geodesic flow $g^t$ on certain Hilbert spaces of symbols on
the cotangent bundle $T^* \X$ of a compact hyperbolic surface $\X
= \Gamma \backslash \D$. We also define $\Gamma$-invariant
$g^t$-eigendistributions $PS_{j, k, \nu_j,-\nu_k}$
(Patterson-Sullivan distributions) out of pairs of
$\Lap$-eigenfunctions, generalizing the diagonal case $j = k$ in
\cite{AZ}. The operator $\lcal$ maps  $PS_{j, k, \nu_j,-\nu_k}$ to
the Wigner distribution $W^{\Gamma}_{j, k}$ studied in quantum
chaos. We define Hilbert spaces $\hcal_{PS}^*$  spanned by
$\{PS_{j, k, \nu_j,-\nu_k}\}$, resp. $\hcal_W^*$ spanned by
$\{W^{\Gamma}_{j, k}\}$, and show that $\lcal$ is a unitary
isomorphism from $\hcal_{PS}^* \to \hcal_{W}^*. $ The symbols lie
in the dual Hilbert spaces.

\end{abstract}

\section{Introduction}

On a hyperbolic surface $\X = \Gamma \backslash \D$, there is an
intimate relation between the spectral properties of the laplacian
$\Lap$  and those of the geodesic flow $g^t$ on the unit tangent
bundle $S \X$. The Selberg trace formula  gives an exact formula
for the trace of the Schr\"odinger  flow $e^{ it \frac\Lap2} $ as
a sum over closed geodesics, and it may be interpreted as a trace
of the pull-back operator by $g^t$ \cite{G}. Equivalently,
eigenvalues of $\Lap$ are (re-parameterizations of) the resonances
of $g^t$ (see \cite{B} for background); see also \cite{Bis,Po2,M}
for some of the many different perspectives on this relation. In
this article, we give a yet stronger relation between the two
flows~: we construct an explicit intertwining operator $\lcal$
(Definition \ref{LDEF}) between the Schr\"odinger flow and the
geodesic flow. Our main result, Theorem \ref{INTintrodual}, is
that there exist Hilbert spaces of symbols on which $\lcal$ is a
unitary intertwining operator between the classical and quantum
flow. Much of the problem is to construct the appropriate Hilbert
spaces, which we denote by $\hcal_W, \hcal_{PS}$ (see Definitions
\ref{HCALWDEF}-\ref{HCALPSDEF}). They cannot be the standard
Hilbert spaces, $L^2(S \X)$ for $g^t$, resp. Hilbert-Schmidt
operators for $e^{ it \frac\Lap2} $, since the spectrum of $g^t$
is continuous, while that of $e^{ it \frac\Lap2} $ is discrete;
and they are also quite different from the Banach spaces
constructed in \cite{BT,BKL,BL,FRS,GL} in the theory of resonances
of $g^t$.

The construction of $\lcal, \hcal_W, \hcal_{PS}$ and the proof of
the intertwining property grow out of our previous work \cite{AZ},
where we introduced and studied a family of distributions (that we
called Patterson-Sullivan distributions) on the unit tangent
bundle of a hyperbolic surface. These distributions are invariant
under the geodesic flow, and we showed that they are closely
related to the {\em Wigner distributions} appearing in the theory
of quantum ergodicity. The Patterson-Sullivan distributions are
naturally constructed from the family of eigenfunctions of the
laplacian, and we showed that they also arise as residues of
dynamical zeta-functions at the poles located on the critical
line.

In this paper, we introduce the family of {\em off-diagonal} Patterson-Sullivan distributions,
and show how they are related to the off-diagonal Wigner
distributions (appearing in the study of quantum mixing).
This construction is a rather straightforward generalization
of the work done in  \cite{AZ}. More importantly, we show that
 these formulae directly lead to an operator intertwining the geodesic
 flow and the Schr\"odinger flow on the hyperbolic plane (or a compact
  quotient $\X$). Roughly speaking, the dual Hilbert spaces
  $\hcal^*_W, $ resp. $\hcal^*_{PS}$, intertwined by $\lcal$  are spanned by the Wigner,
  resp. Patterson-Sullivan, distributions.
  The main goal of this paper is to construct explicitly this intertwining operator,
   first on the hyperbolic plane, then on a compact quotient, and
to investigate some of its properties. The existence of this operator opens
the way to the construction of a quantization procedure satisfying the Egorov theorem in its exact form (without remainder term).

We have to explain in what sense one can find an intertwining operator between
the geodesic flow and the Schr\"odinger flow. The former acts on functions on the
 (co)tangent bundle $T\X$ whereas the latter acts on functions on the base manifold $\X$. In fact,
 we let the Schr\"odinger group act on the space of operators, by conjugation (as in the Heisenberg picture
 of quantum mechanics). Operators have a Schwartz kernel, which is a distribution on the product $\X\times \X$.
 By taking the local Fourier transform of the kernel with respect to the second component, we get a distribution
  on the cotangent bundle $T^*\X$, called the {\em symbol} of the operator. This way, we see that the Schr\"odinger
  group acts naturally on the space of distributions on $T^* \X$ (in the paper we will always identify the tangent
  and the cotangent bundles by means of the riemannian metric). With this formulation, the Schr\"odinger flow acts
   on the same space as the geodesic flow, and it is in this sense that we shall intertwine their actions.

\subsection{Notation} To state our results, we need to introduce some notation (see \S \ref{BACKGROUND} for
more details).  We will denote $G = PSU(1,1) \simeq PSL(2, \R)$,
$K=PSO(2, \R)$ a maximal compact subgroup, and $ G/K$ the
corresponding symmetric space, for which we will in general use
the picture of the hyperbolic disc $\D=\{z\in\C, |z|<1\}$, endowed
with the riemannian metric
$$ds^2=\frac{4|dz|^2}{(1-|z|^2)^2}.$$
This is a standard normalization in hyperbolic geometry, but we
caution that it differs by a constant factor from the
normalization used by Helgason \cite{He}; for us, the
$L^2$-spectrum of the laplacian on $\D$ is $(-\infty, -\frac14]$.
Hence some discrepancies between some of our formulae and
Helgason's.

 It is well-known that $G$ can be identified with the unit tangent bundle $S\D$ of the hyperbolic disc $\D$ (when using the theory of pseudodifferential operators, it is more natural to work on the cotangent bundle, but we will always identify both). We will be particularly interested in the geodesic flow, which acts on $G$ by right multiplication as follows~: for all $g\in  PSL(2, \R)$, for all $t\in\R$,
$g^t(g)=ga_t$ where
$a_t = \left( \begin{array}{ll} e^{t/2} & 0 \\ & \\
0 & e^{-t/2} \end{array} \right)\in SL(2, \R).$  We will also use the action of the horocycle
flow $(h^u)_{u\in\R}$, acting by $h^u(g)=gn_u$ where
$n_u = \left( \begin{array}{ll} 1 & u \\ & \\
0 & 1\end{array} \right)\in SL(2, \R). $

We will work with two special parameterizations of the unit
tangent bundle, identified with $G$. The first one is obtained by
writing $G\sim (G/K)\times K\sim \D\times B$ where $B$ is the
boundary at infinity of $\D$, identified with the unit circle
$S^1$ in the Poincar\'e disc model. The group $K$ and the boundary
at infinity $S^1$ are identified by the map
$$
 \left( \begin{array}{ll} \cos \theta & -\sin \theta \\ & \\
\sin\theta & \cos\theta \end{array} \right)\mapsto e^{2i\theta}.$$
This way, a point in $G$ can be parametrized by the coordinates $(z, b)$, where $z\in \D$ and $b\in B$. In geometric terms, if $(z, b)$ is identified with a unit tangent vector in $S\D$, then $b$
represents the (forward) limit point of the geodesic generated by $(z, b)$. The action of $G$ on itself by left-multiplication yields an action of $G$ on $B$ (Section \ref{s:HA}).

We shall also use the following parameterization~: denote
$B^{(2)}=\{(b', b)\in B\times B, b\not=b'\}$ the set of pairs of
distinct points in the boundary. Each oriented geodesic in $\D$ is
completely determined by its (unique) forward limit point $b$ in
$B$ and its (unique) backward limit point $b'\not= b$ in $B$~: we
will denote $\gamma_{b', b}$ the geodesic going from $b'\in B$ to
$b\in B$. Thus, $B^{(2)}$ can be naturally identified with the set
of oriented geodesics of $\D$. The elements of $G$ can be
parametrized by $(b', b, \tau)$ with $(b', b)\in B^{(2)}$ and
$\tau\in \R$. We identify $(b', b, \tau)$ with the point $(z, b)
\in \D \times B$, where  $z$ is on the geodesic $\gamma_{b', b}$,
situated $\tau$ units from  the point $z_{b, b'} \in \gamma_{b,
b'}$ closest to the origin $o\defi eK\in \D $.

Our final goal is to obtain formulae that are valid on a compact
quotient of $\D$; that is, we consider a co-compact discrete
subgroup $\Gamma\subset G$. We assume it has no
torsion\footnote{This assumption is probably not necessary.} and
contains only hyperbolic elements. Then the quotient
$\X=\Gamma\backslash \D$ is a compact hyperbolic surface.

\subsection{Quantization, Wigner distributions and Patterson-Sullivan distributions}

A quantization procedure adapted to the hyperbolic disc was defined in \cite{Z3}, using Helgason's version of the Fourier transform \cite{He}. For $(z, b)\in \D\times B$, define the Busemann function $\la z, b\ra$ as the signed distance to $o$ of the horocycle going through the points $z\in\D, b\in B$. The family of functions $z\mapsto e^{(\frac12+ir)\la z, b\ra}$ ($r>0, b\in B$) forms a basis of generalized eigenfunctions of the laplacian on $L^2(\D)$ \cite{He}. The hyperbolic pseudodifferential operators introduced by \cite{Z3} are defined by
\begin{equation} \label{e:opZ}\Op(a)e^{(\frac12+ir)\la \bullet, b\ra}=a(\bullet, b, r)e^{(\frac12+ir)\la \bullet, b\ra},
\end{equation}
if $a=a(z, b, r)$ is a function on $\D\times B\times \R\simeq S\D\times \R$ which must have ``reasonable'' decay and smoothness properties (section \ref{PSI}). The function $a$ is called the symbol of the operator. We note that by choosing $r<0$ instead of $r>0$ we obtain another basis of generalized eigenfunctions of the laplacian. We also note that the Schwartz-kernel of the operator is formally given by
\begin{equation} K_a(z,w) =  \int_B \int_{r\in \R_+} a(z, b', r) e^{( \frac12+ir )\langle z,
b' \rangle}  e^{( \frac12- ir )\langle w, b' \rangle} dp(r) db',
\end{equation}
where $dp(r)$ is the Plancherel measure defined in \S
\ref{MEASURES}. Paley-Wiener type theorems relating the decay and
regularity of $a$ and those of $K_a$ will be recalled in \S
\ref{s:Schw}. {\em In most formulae we must assume that $K_a$
decays sufficiently fast away from the diagonal $\{z=w\}$. This
implies in particular that the corresponding symbol $a$ has a
holomorphic extension to $r\in\C$}.

Let $\X$ be a compact quotient of $\D$ as above, and fix an orthonormal basis $(\phi_k)$ of $L^2(\X)$ formed of eigenfunctions of the laplacian. We use the standard notations in hyperbolic spectral theory~: the eigenfunctions $\phi_k$ satisfy
$$\Lap \phi_k=-\left(\frac14-\nu_k^2 \right)\phi_k=-\left(\frac14+r_k^2 \right)\phi_k,$$
where $\nu_k=ir_k\in \R\cup i\R$ is called the spectral parameter (on a compact surface, only a finite number of $r_k$s are imaginary). For each eigenvalue there are two possible choices for the spectral parameter.

If $a$ is $\Gamma$-invariant, it was shown in \cite{Z3, Z} that $\Op(a)$ preserves the space of $\Gamma$-invariant functions. We denote by $\Op_\Gamma(a)$ the operator $\Op(a)$
acting on $\Gamma$-invariant functions.
The ``Wigner distributions\footnote{This terminology is normally used in a euclidean context, but it is sometimes convenient to extend it to non-euclidean geometries.}'' $W_{j, k}$
 are defined on $  S\D \times \R \simeq  G\times \R$ by the formula
\begin{equation}  \int_{S\D\times \R} a \, dW_{j, k}\defi \langle \Op(a )
\phi_{j}, \phi_{k}\rangle_{\D},
\end{equation}
for $a$ a function on  $  S\D \times \R$, with appropriate growth and smoothness properties. The distribution $W_{j, k}$ is invariant by the action of $\Gamma$ on $S\D$, and thus can be used to define a distribution $W_{j, k}^\Gamma$ on the quotient
$  S\X \times \R \simeq \Gamma\backslash G\times \R$~: if $a$ is a function on
$ \Gamma\backslash G\times \R$, in other words a $\Gamma$-invariant function on $G\times \R$,
we define $$ \int_{S\X\times \R} a  \,dW_{j, k}^{\Gamma} =\int_{S\D\times \R} \chi a\,  dW_{j, k}, $$
where $\chi$ is a smooth fundamental cut-off function for the action of $\Gamma$ (see \S \ref{FUND}). It can easily be seen that this definition does not depend on the choice of $\chi$.
Moreover we have
\begin{equation}  \int_{S\X\times \R} a \, dW_{j, k}^{\Gamma} = \langle \Op_\Gamma(a )
\phi_{j}, \phi_{k}\rangle_{L^2(\X)} = \mbox{Tr}_{\X}\; \Op_\Gamma(a) \circ
(\phi_{j} \otimes \phi_{k}^*),
\end{equation}
for $a$ a function on  $  S\X \times \R$, with appropriate growth and smoothness properties (see Section \ref{PSI} for a detailed discussion).

 The Wigner distribution $W_{j, k}^{\Gamma}$ may also be expressed in terms of the boundary values of the eigenfunctions $\phi_j, \phi_{k}$. The boundary values $T_{k, \nu_k}$ of  $\phi_{k}$ is a distribution on the boundary $B$, with the property that
$$\phi_{k}(z) = \int_B e^{(\frac12 + \nu_k) \langle z,
b \rangle } T_{k, \nu_k} (db),$$ for all $z\in\D$. It depends on the choice of a spectral parameter $\nu_k$, and is unique if we pick $\nu_k$ such that
 $\frac{1}{2} + \nu_k \not= 0, -1,
-2,\cdots$ (\cite[Theorems 4.3 and 4.29]{He}; see also \cite{He2}). With a slight abuse of notation we shall denote $T_{ \nu_k}$ instead of $T_{k, \nu_k}$. Using the definition of $\Op$, we have
$$\int_{S\X\times \R} a \, d{W}^\Gamma_{j, k}= \int_{\D\times B} \chi(z, b) a(z, b, r_j)\overline{ \phi_{k}(z)  }e^{(\frac12+ \nu_j) \langle z,
b \rangle } T_{\nu_j} (db)\Vol( dz)$$
where $\chi$ is a smooth fundamental cut-off, see \S \ref{FUND}.

It follows from its definition that $W^{\Gamma}_{j,
k}$ is an eigendistribution of the quantum evolution. Define
\begin{equation} \label{EGORAUT}  \alpha^t (\Op(a)) = e^{- it \frac\Lap2} \Op(a) e^{i t \frac\Lap2}. \end{equation}
We then have
\begin{equation} \label{EIGENREL} \langle \alpha^t (\Op_\Gamma(a)) \phi_{j},
\phi_{k}\rangle =  e^{i t \frac{( \nu^2_j- \overline{\nu_k}^2)}2} \langle \Op_\Gamma(a )
\phi_{j}, \phi_{k}\rangle=e^{i t \frac{(r^2_k - r^2_j)}2} \langle \Op_\Gamma(a )
\phi_{j}, \phi_{k}\rangle \end{equation}
where the last identity holds only for real values of $r_j, r_k$.
We
henceforth denote by $V^t$
 the  operator on symbols, defined formally by
\begin{equation} \label{QEXACT}  \alpha^t (\Op(a)) =
\Op(V^t(a)). \end{equation} See Section \ref{PSI} for precise assumptions on $a$.
 We shall denote $V^t_\Gamma$ when we want to stress the fact that $V^t$ act on $\Gamma$-invariant symbols.

The Wigner distributions have been studied a lot in the theory of
quantum ergodicity and quantum mixing. In the context of $\Gamma
\backslash G$, the Wigner distributions of this article are
studied in \cite{Z,Z3,W,SV,SV2} as well as in \cite{AZ}. In this paper we introduce the
family of (off-diagonal) Patterson-Sullivan distributions. They
are also constructed from the eigenfunctions
 $\phi_{j}$. Take the boundary values   $T_{j, \nu_j}(db)$ of $\phi_{j}$ and  $ T_{k, -\nu_k}(db)$ of $\phi_{k}$.

\begin{maindefn} \label{introdefPS} $PS_{j, k, \nu_j,-\nu_k}(db', db, d\tau)$ is the $\Gamma$-invariant distribution on $B^{(2)}\times \R\sim G$ defined by
$$PS_{(j,  \nu_j),(k, -\nu_k)}(db',db, d\tau)
=\frac{T_{\nu_j}(db)\overline{T_{-\nu_k}}
(db^\prime)}{|b-b^\prime|^{1+\nu_j-\overline{\nu_k}}} e^{(\nu_j+\overline{\nu_k})\tau}d\tau.
$$ \end{maindefn}
We note that the Patterson-Sullivan distributions depend on the eigenfunctions $\phi_j, \phi_k$, but also on the choice of the spectral parameters $\nu_j, \nu_k$ (in contrast with the Wigner distributions, which depend only on the eigenfunctions); hence the notation $PS_{(j,  \nu_j),(k, -\nu_k)}$. In the sequel, we will in general use the shorter notation $PS_{ \nu_j,-\nu_k}$, although it is slightly abusive.
In Proposition \ref{PSGAMMA}, we check that the distributions $PS_{\nu_j, -\nu_k}$ are
(right)-$\Gamma$-invariant distributions on $ B^{(2)}\times \R\sim G$. Besides, since the geodesic flow reads
$$g^t(b', b, \tau)= (b', b, \tau+t),$$
they are eigendistributions
for the geodesic flow in the sense that
\begin{equation} \label{EIGFORM} g^{t }_\sharp
PS_{\nu_j,-\nu_k} = e^{- t (\nu_j+\overline{\nu_k})}
PS_{\nu_j,-\nu_k}=e^{it (r_k-r_j)}
PS_{\nu_j,-\nu_k}
\end{equation}
(the last identity holds only for real values of $r_j, r_k$).
As a result,
$PS_{\nu_j,- \nu_k}$ induces an eigendistribution
$PS^{\Gamma}_{\nu_j, -\nu_k}$ of the geodesic flow on $\Gamma
\backslash G = S \X$, defined by
\begin{equation} \label{PSGAMMAa} \int_{\Gamma \backslash G} a\,
dPS^{\Gamma}_{\nu_j,- \nu_k} = \int_G  (\chi a) \,dPS_{\nu_j, -\nu_k},
 \end{equation} for every smooth
$\Gamma$-invariant function $a$. Once again $\chi$ is a smooth
fundamental domain cutoff, see \S \ref{FUND}. When
$\nu_j+\overline{\nu_k}=0$, these Patterson-Sullivan distributions
are the family of {\em diagonal} Patterson-Sullivan distributions,
introduced in \cite{AZ} (these are invariant by the geodesic
flow). Recently, Hilgert and Schr\"oder have extended the
definition and properties of off-diagonal Patterson-Sullivan
distributions  to more general symmetric spaces \cite{HilSc,
SchDiss}.

It was pointed out in \cite{Z} that the distribution
$\epsilon_{\nu_j}(z, b)e^{\la z, b\ra}\Vol(dz)db\defi e^{(\frac12 + \nu_j) \langle z,
b \rangle } T_{\nu_j} (db) \Vol(dz)$ on $S\X$
is a joint eigendistribution of the
horocycle and geodesic flows, contained in the (spherical) irreducible representation of $G$ generated by $\phi_{j}$.
The $PS$-distributions are new objects, and it is illuminating to
express them in terms of these more familiar ones.

\begin{prop} \label{PSEINTRO}  The Patterson-Sullivan distributions are given by the (well-defined)
products $\left(\epsilon_{\nu_j} \cdot \iota \overline{\epsilon_{-\nu_k}} \right)$~:
$$PS_{\nu_j, -\nu_k} (db', db, d\tau)= \frac{2^{-(\nu_j-\overline{\nu_k})}}{2\pi}\left(\epsilon_{\nu_j} \cdot \iota \overline{\epsilon_{-\nu_k}} \right)(z, b) \,e^{\la z, b\ra}\Vol(dz)db, $$
with $(z, b)\simeq (b', b, \tau)$ and
where $\iota$ is the involution $(b', b, \tau)\mapsto (b, b', -\tau)$, corresponding to $(x, \xi) \mapsto (x, -\xi)$ on $S\D$ or to the action of the non-trivial element of the Weyl group, $g\mapsto gw$ on $G$, $w=  \left( \begin{array}{ll}0 & -1 \\ & \\
1 & 0 \end{array}\right)$.\end{prop}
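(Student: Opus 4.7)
My plan is to establish the identity by direct computation in the Hopf coordinates $(b',b,\tau)$, converting the formal product on the right-hand side into the explicit density on the left.

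The first step is to interpret $\iota\overline{\epsilon_{-\nu_k}}$ in $(z,b)$ coordinates. Since the involution $\iota:(b',b,\tau)\mapsto (b,b',-\tau)$ fixes the base point $z\in\D$ while exchanging forward and backward endpoints, it is given in $(z,b)$ coordinates by $(z,b)\mapsto(z,b')$. So, unpacking the defining relation $\epsilon_\nu(z,b)\,e^{\langle z,b\rangle}\Vol(dz)\,db=e^{(\tfrac12+\nu)\langle z,b\rangle}T_\nu(db)\Vol(dz)$, the product reads formally
$$\epsilon_{\nu_j}\cdot\iota\overline{\epsilon_{-\nu_k}}(z,b)=e^{(\nu_j-\tfrac12)\langle z,b\rangle+(-\overline{\nu_k}-\tfrac12)\langle z,b'\rangle}\,T_{\nu_j}(b)\,\overline{T_{-\nu_k}(b')}.$$
The key observation justifying the ``well-definedness'' clause is that the two boundary distributions act on the two \emph{independent} variables $b$ and $b'$ of $B^{(2)}$, so their tensor product is a bona fide distribution there (no problematic pointwise product of distributions arises).

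The second step is to rewrite the exponent using the two classical hyperbolic identities for a point $z$ lying on $\gamma_{b',b}$ at signed distance $\tau$ from the foot $z_{b,b'}$:
$$\langle z,b\rangle+\langle z,b'\rangle=-2\log\frac{|b-b'|}{2},\qquad \langle z,b\rangle-\langle z,b'\rangle=2\tau.$$
The sum identity follows from $\langle z,b\rangle=\log\frac{1-|z|^2}{|z-b|^2}$ together with the elementary relation $|z-b|\,|z-b'|=\tfrac12|b-b'|(1-|z|^2)$ valid when $z\in\gamma_{b',b}$; the difference identity is immediate from the fact that the geodesic flow increments $\langle z,b\rangle$ by $+t$ and $\langle z,b'\rangle$ by $-t$. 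Substituting these into the exponent makes the $\tau$-coefficient equal to $\nu_j+\overline{\nu_k}$ and produces an overall factor of $|b-b'|^{1+\overline{\nu_k}-\nu_j}$ times a power of $2$.

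The third step is to convert the reference measure. The Liouville (Haar) measure on $G\simeq S\D$ equals $\Vol(dz)\,d\theta_z$ where $d\theta_z$ is the tangent-angle measure, and the push-forward of $d\theta_z$ to $B$ via the forward-endpoint map is, by the definition of the Poisson kernel, $e^{\langle z,b\rangle}\,db$. On the other hand, in the Hopf coordinates this same Liouville measure is $\frac{db\,db'}{|b-b'|^2}\,d\tau$ (up to the normalization fixed by $db$). Hence $e^{\langle z,b\rangle}\Vol(dz)\,db=\frac{db\,db'}{|b-b'|^2}\,d\tau$, and multiplying the expression from the second step by this and by $\frac{2^{-(\nu_j-\overline{\nu_k})}}{2\pi}$, the exponents of $2$ collapse and the exponents of $|b-b'|$ combine to $-(1+\nu_j-\overline{\nu_k})$, producing exactly the formula of Definition~\ref{introdefPS}.

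The main obstacle is purely bookkeeping: one must pick compatible sign conventions for the Busemann function, the orientation of $\tau$, and the involution $\iota$, and one must be consistent about the normalization of the circle measure $db$ and of the Liouville measure so that the stated prefactor $\frac{2^{-(\nu_j-\overline{\nu_k})}}{2\pi}$ comes out exactly right. Once those choices are pinned down, every intermediate factor of $2^?$ and $|b-b'|^?$ is forced, and the identity reduces to the two Busemann identities recalled above.
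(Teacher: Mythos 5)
Your proof is correct and follows essentially the same route as the paper's: the paper first proves the identity for the plane-wave distributions $\epsilon_{\nu,b}$ on the universal cover (Proposition \ref{PSERB2}, which uses exactly your two Busemann identities via Lemma \ref{IDEN} and the same Hopf-coordinate measure conversion) and then superposes against $T_{\nu_j}(db)\,\overline{T_{-\nu_k}}(db')$, which your direct computation accomplishes in one step. The only constant to pin down is the measure conversion: with the paper's normalized $db$, Lemma \ref{POISBB} gives $e^{\la z,b\ra}\Vol(dz)\,db = 4\pi\,\frac{db\,db'}{|b-b'|^2}\,d\tau$ (not $1$ times it), and it is this $4\pi$, combined with your factor $2^{\nu_j-\overline{\nu_k}-1}$, that yields the stated prefactor $\frac{2^{-(\nu_j-\overline{\nu_k})}}{2\pi}$.
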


\begin{rem}We note that the Wigner distributions are naturally defined on $S\X\times \R$, whereas the Patterson-Sullivan distributions were only defined on $S\X$. In order to relate both, we need to extend the latter to $S\X\times \R$.
We do so by identifying $S\X$ with $S\X\times\{\frac{r_j+r_k}2\}.$ In other words, we extend the $PS$-distributions to
$S\X\times \R$ by considering
$$PS^{\Gamma}_{\nu_j, -\nu_k}\otimes \delta_{\frac{r_j+r_k}2}$$
at least for real values of $r_j, r_k$.
Doing so, we must pay special attention to the case of low laplacian eigenvalues, when $r$ is imaginary. For imaginary $r_j, r_k$, the formular above will
be generalized to
$$PS^{\Gamma}_{\nu_j, -\nu_k}\otimes \delta_{\frac{\nu_j-\overline{\nu_k}}{2i}},$$
and our formula will hold for functions $a(z, b, r)$ that extend holomorphically to imaginary values of $r$ (corresponding to kernels decaying fast enough away from the diagonal).

\end{rem}

\subsection{ \label{s:introinter}Definition of the intertwining operator.}

\begin{maindefn} \label{LDEF} The intertwining operator  $$\lcal:
C^\infty_c (G \times \R) \to C(G \times \R)$$ is defined by
\begin{equation} \label{INT} \lcal a(g, R)=\int  (1+u^2)^{-(\frac12+iR)}a (ga_{\tau-\frac{\log(1+u^2)}2} n_u, r)
e^{-2i(R-r)\tau}
dr du d\tau .
\end{equation}  \end{maindefn}
\noindent Here, $n_u \in N$ is the one-parameter unipotent
subgroup whose right-orbits define the horocycle flow and  $a_t \in A$
is  the one parameter subgroup whose right-orbits define the
geodesic flow (see \S \ref{REP}).

Extend the geodesic flow to $G\times \R$ by the formula\begin{equation}\label{e:speed}G^t(g, r)=(g a_{rt}, r).\end{equation}

We will also consider the geodesic flow as an operator acting on functions,
 by composition~: for a function $a$ on $G$, we denote $g^t a\defi a\circ g^t$, and for a function on $G\times \R$, $G^ta\defi a\circ G^t$.

 \subsection{Statement of results}

The main result
of this paper is that we have the intertwining relation
\begin{equation}\label{e:inter} \lcal\circ V^t=G^t\circ \lcal.\end{equation}
We prove \eqref{e:inter} in several levels. At first, we prove that
the dual (or adjoint) equation holds if we apply both sides to the
Patterson-Sullivan distributions. At this level, we do not specify
the domain and range of the dual intertwining operator  $\lcal$ on
symbols. Indeed, the domain and mapping properties of $\lcal$ on
$\X$ are rather subtle, and we only introduce the domain  in \S
\ref{s:quotient} after first studying the  mapping properties of
$\lcal$ in \S \ref{s:UC} on the hyperbolic disc  (Theorem
\ref{t:statement}). The final result is Theorem
\ref{INTintrodual}.

We now state the first level of the result, whereby the
intertwining relation comes from an exact relation between the
Wigner and Patterson-Sullivan families of distributions. If $a$ is
a function on $G$, decaying fast enough, and $\nu\in\C$, define
$$L_\nu a(g)=\int_\R (1+u^2)^{-(\frac12+\nu)} a(gn_u)du.$$
If $a$ is a function on $G\times\C$, and $\nu\in\C$, define the
function $a_\nu$ on $G$ by $a_\nu(g)=a(g, \nu)$. The following
(somewhat imprecise) statement asserts that the adjoint of $\lcal$
maps Patterson-Sullivan distributions to Wigner distributions. It
is stated precisely in \S \ref{s:interGamma}.

\begin{maintheo}\label{t:WPS} Let $a=a(z, b, r)$ be a $\Gamma$-invariant function, with
$$a(z, b, r)=\sum_{\gamma\in\Gamma}\tilde a(\gamma\cdot z, \gamma\cdot b, r),$$
with $\tilde a$ satisfying adequate decay and smoothness properties. Then
we have $$W_{j, k}^\Gamma(a)=2^{1+\nu_j-\overline{\nu_k}} PS_{\nu_j, -\nu_k}(L_{-\overline{\nu_k}}\tilde a_{\nu_j})
=  PS_{\nu_j, -\nu_k}(\lcal \tilde a).$$
\end{maintheo}
Theorem \ref{t:WPS} is first proved on the hyperbolic disc (Section \ref{s:UC}) by
introducing analogues of the Wigner and Patterson-Sullivan families there, and then goes down to
 the quotient $\X$ (Section \ref{s:quotient}). The ``adequate decay and smoothness properties'' are
  described in \S \ref{s:interGamma} ($\tilde a\in\scal_\infty^{\, \infty}$).

 We now state the final result in a somewhat imprecise form. In \S \ref{s:interGamma}, we define
 Hilbert spaces
  $\hcal_W^*$ and $\hcal_{PS}^*$.  The Wigner distributions form an orthonormal
 of  $\hcal_W^*$, while the Patterson-Sullivan distributions form
 an orthonormal basis of $\hcal_{PS}^*$. Both are modelled on the
 Hilbert space of
 Hilbert-Schmidt pseudo-differential operators. The dual Hilbert
 spaces $\hcal_W,$ resp. $\hcal_{PS}$ are the spaces of symbols on
 which $\lcal$ acts as an intertwining operator. The space
 $\Pi\scal_{\omega}^{\infty}$ is defined in Definition
 \ref{PIINFTY}.

 \begin{maintheo}
 \label{INTintrodualINTRO} The interwining operator  $
\lcal_{\Gamma\sharp} : \hcal_{PS}^* \to \hcal_{W}^*$  is an
isometric isomorphism, and   $ \lcal_{\Gamma\sharp} $ sends
$PS_{\nu_j, -\nu_k}$ to $W_{j, k}. $ Dually, there exists a class
of automorphic symbols,  $a\in \Pi\scal_{\omega}^\infty$, so that
$$\lcal_\Gamma\circ V^t_\Gamma a=G_\Gamma^t\circ \lcal_\Gamma a,$$
as an equality between two elements of $\hcal_{PS}$.

\end{maintheo}

There are similar partial results for the wave flows $e^{it
\sqrt{-\Lap}}$ and the modified wave flow $e^{it \sqrt{-\Lap -
\frac{1}{4}}}$  (see \S \ref{WAVEFLOW}).

\subsection{Asymptotic equivalence of  Wigner and Patterson-Sullivan distributions}
In \cite{AZ}, it is proved that,  after suitable normalization,
the diagonal Wigner distributions  and  Patterson-Sullivan
distributions are asymptotically the same in the semi-classical
limit. The same is true for the off-diagonal elements:
\begin{maintheo} \label{mainintro2} Let  $a \in C^{\infty}(\Gamma
\backslash G)$.  Given a sequence of pairs $(\nu_{j_n}, \nu_{k_n})$ of spectral parameters with $-i\nu_{j_n} \to + \infty$
and $|\nu_{j_n} - \nu_{k_n}| \leq \tau_0$ for some $\tau_0 \geq 0$, we have  the asymptotic formula
$$\int_{S\X} a(g)W^\Gamma_{{j_n}, {k_n}} (dg)= 2^{1+\nu_{j_n}-\overline{\nu_{k_n}}}\left(\frac{\pi}{r_{k_n}}\right)^{1/2} e^{-\frac{i\pi}4}\int_{S\X} a(g) PS^\Gamma_{\nu_{j_n}, -\nu_{k_n}} (dg)+ O(\nu_{k_n}^{-1}).$$
\end{maintheo}
The proof is very similar to that in the diagonal case in
\cite{AZ}, starting from Theorem \ref{t:WPS}. Hence we only sketch
the key points in \S \ref{APPENDIX}.
 This result has been extended to more general symmetric
spaces by Hilgert and Schr\"oder \cite{HilSc}.

 In \cite{Z2} (see also
\cite{Z5}), it is shown that the off-diagonal Wigner distributions
$W_{{j_n}, {k_n}}$ with $j_n\not= k_n$ and with a limiting spectral gap $r_{j_n} -
r_{k_n}\To \tau_0$ tend to zero when the geodesic flow is mixing,
at least after the removal of a subsequence of spectral density
zero. It then follows from Theorem \ref{mainintro2} that:

\begin{maincor}  Take a sequence of pairs $(j_n, k_n)$, with $j_n \not= k_n$ and $r_{j_n} - r_{k_n}\To \tau_0$. Assume that this sequence has positive density, in the sense that
$$\liminf_{\lambda\to +\infty}\frac{\sharp\{n, |r_{j_n}|\leq \lambda\}}{\sharp\{ j, |r_j|\leq \lambda\}}>0.$$

Then there exists a subsequence of full density such that $ r_{k_n}^{-1/2}PS_{\nu_{j_n}, -\nu_{k_n}} \To 0$.
\end{maincor}

 \subsection{Relations to other work}
 The existence of
the intertwining operator is rather unexpected from the viewpoint
of microlocal analysis and quantum chaos, but is quite natural
from the viewpoint of automorphic distributions and invariant
triple products  \cite{BR,BR2,MS,D,SV,SV2}, where it may be
interpreted as intertwining the family of Wigner  triple products
$\ell^W(a, \phi_{j}, \phi_{k} ) = \langle \Op(a)
\phi_{j}, \phi_{k} \rangle$ and the family of
Patterson-Sullivan triple products $\ell^{PS} (a, \phi_{j},
\phi_{k}) = \langle a, d PS_{\nu_j, -\nu_k} \rangle.$ It follows
from general principles that there exist constants $C_{r_j, r_k}$
$\ell^{PS} (a, \phi_j, \phi_k) = C_{r_j, r_k} \ell^{W} (a,
\phi_{j}, \phi_{k})$ and essentially $\lcal$ is an integral
operator with matrix elements $C_{r_j, r_k}$. Explicit formula are
given in \cite{AZ2} relating the triple products when evaluated on
automorphic symbols of a fixed weight. This approach through
explicit formulae and representation theory is a (less global)
alternative to the study of $\lcal$.  But $\lcal$ might have an
independent interest in representation theory as the intertwining
operator between these two families of triple products.

The relations between Wigner and Patterson-Sullivan distributions,
and the exact formulae relating them in \cite{AZ2}, shed some
light on the limit formula for  quantum variances of Wigner
distributions proved by Luo-Sarnak \cite{LS} and Zhao \cite{Zh}.
The quantum variance for a zeroth order pseudo-differential
operator $A$ is defined as
\begin{equation} \label{diag} V_A(\lambda) \defi
\frac{1}{N(\lambda)} \sum_{j:  \lambda_j \leq \lambda} |\langle A
\phi_j, \phi_j\rangle - \int a_0 d\omega|^2, \;\;\;\;\;
(N(\lambda) = \#\{j: \lambda_j \leq \lambda\})
\end{equation}
where $ \int a_0 d\omega$ is the Liouville average of the
principal symbol $a_0$ of $A$. It was suggested by Feingold and
Peres \cite{FP} that the quantum variance should tend to $0$ the
following way~:
 $$V_A(\lambda)\sim \frac{B(a_0, a_0)}{\sqrt{\lambda}}.$$
 The bilinear form $B$ should be proportional to $\hat{\rho}_{a_{0}, a_0}(0)$, where
 \begin{equation} \label{CORFUN} \rho_{\phi,\psi}(t)=\int_{S\X} \phi( x)\psi(g^tx)d\omega(x)-\int \phi d\omega\int
\psi d\omega \end{equation} is the ``dynamical correlation
function'', and $\hat{\rho}_{\phi,\psi}(\tau) = \int_{-
\infty}^{\infty} e^{- i \tau t} \rho_{\phi,\psi}(t) dt$ is its
Fourier transform. In \cite{LS,Zh} a version of this conjecture
(with additional arithmetic factors) was proved for the basis of
Hecke eigenfunctions.

We postpone further discussion to \cite{AZ2}, but would like to
state a conjecture that connects Patterson-Sullivan distributions
to the quantum variances in \cite{LS,Zh}. Unlike Wigner
distributions, Patterson-Sullivan distributions are defined
independently on the choice of a quantization procedure. In view
of Theorem \ref{mainintro2}, it is natural to consider the
classical variances for the diagonal $PS$-distributions:
\begin{equation} \label{PSdiag} PS_a(\lambda) \defi
\frac{1}{N(\lambda)} \sum_{j:  \lambda_j \leq \lambda} |\langle a,
\widehat{PS}_{\nu_j, -\nu_j} \rangle  - \int a d\omega|^2.
\end{equation}
Here, $\widehat{PS}_{\nu_j, -\nu_j} \defi \frac{1}{\langle \1,
PS_{\nu_j, -\nu_j} \rangle_{S\X}} \; PS_{\nu_j, -\nu_j}$ are
normalized $PS$-distributions (see \cite{AZ}) so that the
statement is correct for constant functions.  Since they are sums
of $g^t$-invariant bilinear forms, they should  have a closer
relation to dynamical correlation functions than variances for
Wigner distributions.

\begin{conj} Let $\X$ be a compact hyperbolic surface. There exists a constant $C_{\X}$ such that  $$PS_{a}(\lambda) \sim
\frac{C_{\X} \;\;\hat{\rho}_{a_0, a_0}(0) }{\sqrt{\lambda}}.
$$
\end{conj}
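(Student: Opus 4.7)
The plan has two stages. The first reduces the conjecture to the corresponding asymptotic for the quantum (Wigner) variance $V_A(\lambda)$ defined in \eqref{diag}; the second, which is the substantive one, attempts to derive the Feingold--Peres law for $V_A(\lambda)$ using the exact intertwining provided by $\lcal$. A small obstacle worth flagging at the outset is that the conjectural leading order $\lambda^{-1/2}$ itself is not proved on a general compact hyperbolic surface, so any ``proof'' of the conjecture will really be a conditional derivation of $C_{\X}$.

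\textbf{Stage 1: reduction to the quantum variance.} Apply Theorem \ref{mainintro2} with $j_n = k_n$, i.e.\ $\tau_0=0$, obtaining for smooth $a$
$$W^\Gamma_{j,j}(a) \;=\; C_j\, PS^\Gamma_{\nu_j,-\nu_j}(a) \;+\; E_j(a), \qquad |C_j| = 2\sqrt{\pi/r_j},\;\; E_j(a) = O(r_j^{-1}).$$
Evaluating at $a$ and at $\1$ and forming the ratio $\widehat{PS}_j(a) = PS_j(a)/PS_j(\1)$, the factor $C_j$ cancels, giving $\widehat{PS}_j(a) = \langle A\phi_j,\phi_j\rangle + O(r_j^{-1})$ uniformly in $j$. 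Squaring and summing via Weyl's law $N(\lambda) \sim \Vol(\X)\lambda/(4\pi)$ yields $PS_a(\lambda) = V_A(\lambda) + O((\log\lambda)/\lambda) = V_A(\lambda) + o(\lambda^{-1/2})$. Thus the conjecture is equivalent to the Feingold--Peres asymptotic $V_A(\lambda) \sim C_{\X}\,\hat\rho_{a_0,a_0}(0)/\sqrt\lambda$ for a self-adjoint zeroth-order $\Psi$DO with principal symbol $a_0 = a$.

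\textbf{Stage 2: attacking $V_A(\lambda)$ through $\lcal$.} Set $A_0 = A - (\int a\,d\omega)\Id$. By \eqref{QEXACT}, $\alpha^t(A_0) = \Op(V^t a_0)$, and by Theorem \ref{INTintrodualINTRO} the operator $V^t$ conjugated through $\lcal$ is \emph{exactly} the geodesic flow $G^t$ on $\hcal^*_{PS}$. The idea is then to express the sum $\sum_{\lambda_j\leq\lambda}|\langle A_0\phi_j,\phi_j\rangle|^2$ via a Fourier inversion as a regularized trace
$$\frac{1}{2\pi}\int_{-\infty}^{\infty}\!\!\int_{-\infty}^{\infty} \hat\chi(t)\,\rho_\lambda(\mu)\,\Tr\!\bigl[A_0^*\,\alpha^t(A_0)\,E(d\mu)\bigr]\,d\mu\,dt,$$
with $\chi$ a smooth cutoff approximating $\1_{[0,\lambda]}$ and $E$ the spectral projector of $\Lap$. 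Interpreting $\alpha^t(A_0)$ as $\Op(a_0\circ g^t)$ plus lower-order corrections (exact under $\lcal$), a stationary-phase analysis in $t$ at the semiclassical rescaling $t = s/\sqrt\lambda$, together with mixing of $g^t$ and the Selberg identity term, should produce $\sqrt\lambda \cdot C_{\X}\,\hat\rho_{a_0,a_0}(0)$ on the right-hand side; dividing by $N(\lambda)$ gives the claimed rate. The constant $C_{\X}$ is expected to be a universal Plancherel-density constant arising from the semiclassical density of states of $\Lap$, hence \emph{independent} of $\X$ (justifying writing $C_\X = C$).

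\textbf{Main obstacle.} The Tauberian step from the smoothed test-function trace back to the sharp cutoff in \eqref{diag} requires controlling the off-diagonal matrix elements $|\langle A\phi_j,\phi_k\rangle|^2$ for $|r_j - r_k|$ small, and this is precisely the point where the leading Feingold--Peres asymptotic is unproved on a general compact hyperbolic surface. Luo--Sarnak and Zhao bypass this via the Kuznetsov formula and Hecke multiplicativity; in the present geometric setting, the natural substitute is to exploit the exact intertwining by $\lcal$ to transport the problem into the Ruelle--Pollicott resonance picture of $g^t$ on $\hcal^*_{PS}$, and to show that the sub-leading spectral contributions are of size $o(\sqrt\lambda/N(\lambda))$. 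Establishing this quantitative resonance control, uniformly in the semiclassical parameter, is where the real work lies; the algebraic and microlocal parts of the proof (Stages 1 and 2 above) are essentially dictated by the formalism already developed in the preceding sections.
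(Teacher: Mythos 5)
This statement is a \emph{conjecture}: the paper offers no proof of it, and it is stated precisely because the asymptotics of quantum (and classical) variances on a general compact hyperbolic surface are open. Your proposal does not close that gap. Stage 2 is a program, not an argument: the decisive steps are phrased as ``should produce'' and ``is expected to be,'' and you yourself concede in the final paragraph that the quantitative control of the sub-leading spectral contributions --- the only place the rate $\lambda^{-1/2}$ and the constant could actually come from --- is not carried out. The exact intertwining by $\lcal$ converts $V^t$ into $G^t$ on the spaces $\hcal_W^*,\hcal_{PS}^*$, but those spaces are built tautologically so that the Wigner, resp.\ Patterson--Sullivan, distributions are orthonormal; the intertwining by itself yields no spectral gap, no rate of mixing in a norm relevant to the variance sum, and no Tauberian theorem. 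Mixing of $g^t$ gives only $V_A(\lambda)=o(1)$; extracting the $\lambda^{-1/2}$ law is exactly the hard analytic (in the known cases, arithmetic) input that is absent here. The assertion that $C_{\X}$ is a universal constant independent of $\X$ is likewise unsupported.

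Stage 1 also deserves scrutiny. First, your estimate of the cross term $\frac{1}{N(\lambda)}\sum_j \Re\bigl(x_j\overline{e_j}\bigr)$ (with $x_j=\langle A\phi_j,\phi_j\rangle-\int a\,d\omega$ and $e_j=O(r_j^{-1})$) requires $V_A(\lambda)^{1/2}(\log\lambda/\lambda)^{1/2}=o(\lambda^{-1/2})$, i.e.\ it already assumes a decay rate for $V_A$ of the strength one is trying to prove; with only quantum ergodicity the cross term is a priori of the \emph{same} order $\lambda^{-1/2}$ as the conjectured main term, so the claimed equivalence $PS_a(\lambda)=V_A(\lambda)+o(\lambda^{-1/2})$ is circular. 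Second, even granting it, the reduction works against you: on arithmetic surfaces with Hecke bases, Luo--Sarnak and Zhao give $V_A(\lambda)\sim \hat\rho_{a_0,a_0}(0)\,L(\frac12,a_0)/\sqrt\lambda$, which is \emph{not} of the form $C_{\X}\hat\rho_{a_0,a_0}(0)/\sqrt\lambda$; if $PS_a$ and $V_A$ agreed to $o(\lambda^{-1/2})$, the conjecture would fail there. The authors' motivation for stating the conjecture for $PS$-distributions --- that, being $g^t$-invariant, they should relate to $\hat\rho_{a_0,a_0}(0)$ \emph{more} directly than Wigner distributions do --- presupposes that the two variances differ at leading order, i.e.\ that the $O(r_j^{-1})$ corrections in Theorem \ref{mainintro2} contribute at order $\lambda^{-1/2}$ after squaring and averaging. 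Discarding them, as Stage 1 does, therefore misses the essential point rather than reducing the problem.
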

We note that the quantum variance associated with
Patterson-Sullivan distributions may still depend on the choice of
a basis of eigenfunctions.

Finally, we point out a possibly  tenuous relation of our
intertwining problem to the one studied by Bismut on locally
symmetric spaces of non-compact type in Chapter 10 of \cite{Bis}.
On the infinitesimal level, we are intertwining the generator of
the geodesic flow to the operator $P$ taking a symbol $a$ to the
symbol of $[\Op(a), \Lap]$. As discussed in \cite{Z3} the latter
operator $P$  has the form $H^2 + 4 X_+^2 + i r H$, where $H^2 + 4
X_+^2$ is elliptic along the stable foliation and $ir H$ is the
semi-classical operator of order $2$ where $H$ generates the
geodesic flow. By comparision, Bismut's hypoelliptic laplacian
$L_b^X$ is essentially the weighted sum of the harmonic oscillator
on the fiber of $T^* \X$ and $b H$. We note that the $H$ terms are
identical if we set $b = r$, while the other terms are in a sense
orthogonal (Bismut's is vertical while ours is horizontal). But
both have the essential property that as the semi-classical
parameter $b = r \to \infty$, the operators converge to the
generator of the geodesic flow.  There is a possible
 parallel of our conjugation problem to  the conjugation between the hypoelliptic
laplacian and a certain elliptic operator in \cite{Bis} (Chapter
10). We encounter similar problems in defining the domain of
$\lcal$ and its inverse.

\vspace{.5cm}
{\bf Notational issue~:} In what follows we have to face the issue that we are sometimes using
 the {\em sesquilinear} pairing between two $L^2$ functions (or more generally, two elements of a
  complex Hilbert space), and sometimes the {\em bilinear} pairing between a distribution and a test function
  (more generally, an element of a vector space and a linear form). We will try to keep distinct notations to
   avoid confusion, denoting by $\la a, b\ra$ the scalar product of two $L^2$ functions (linear w.r.t. $a$,
    antilinear w.r.t. $b$), and by $T(a)$ the pairing between a distribution $T$ and a test function $a$.
     More generally, we shall use the bracket notation $\la ., .\ra$ {\em exclusively} for sesquilinear pairings.

If $L$ is a linear operator on a space endowed with a sesquilinear form, we shall denote by $L^\dagger$
 its adjoint in the hermitian sense, that is, $\la  a,L b\ra=\la L^\dagger a, b\ra$.

We will denote $L_\sharp$ the adjoint of $L$ in the usual sense of duality~: if $T$ is a linear form, then $L_\sharp T$
 is the linear form defined by $(L_\sharp T)(a)\defi T(La)$. Thus, $L\mapsto L_\sharp$ is linear whereas $L\mapsto L^\dagger$ is antilinear.

If $T$ is a distribution and $\Phi$ a diffeomorphism, we shall also denote by $\Phi_\sharp T$ the
 pushforward of $T$ by $\Phi$~: $(\Phi_\sharp T)(a)\defi  T(a\circ\Phi)$. The two notations should not interfere.

\section{\label{BACKGROUND} Coordinates on $S\D$.}

 \subsection{\label{REP}Dynamics and group theory of  $G = PSL(2, \R)$}

A set of  generators of the Lie algebra $sl(2, \R)$ is given by
$$H = \left( \begin{array}{ll} 1 & 0 \\ & \\ 0 & - 1 \end{array}
\right), \;\;\; X_+ = \left( \begin{array}{ll} 0 & 1 \\ & \\ 0 & 0
\end{array} \right), \;\; Y =  \left( \begin{array}{ll} 0 & -1 \\
&
\\1 & 0 \end{array} \right).$$  The subgroups they generate are denoted by $A, N, K$ respectively.  We also put
$X_- = \left( \begin{array}{ll} 0 & 0 \\ & \\ 1 & 0
\end{array} \right),$
and denote the associated subgroup by $\overline{ N}$. In the
identification $S\D\equiv PSL(2, \R)$ the geodesic flow $(g^t)_{t\in\R}$ is given
by the right action of the group $A$ of diagonal matrices with positive entries~:
$g\mapsto ga_t$ where
$a_t = \left( \begin{array}{ll} e^{t/2} & 0 \\ & \\
0 & e^{-t/2} \end{array} \right).$  The action of the horocycle
flow $(h^u)_{u\in\R}$ is defined by the right action of $N$, in
other words by $g\mapsto gn_u$ where
$n_u = \left( \begin{array}{ll} 1 & u \\ & \\
0 & 1\end{array} \right).$
We shall also denote $\bar n_u = \left( \begin{array}{ll} 1 & 0 \\ & \\
u & 1\end{array} \right).$

 \subsection{\label{AC} Adapted coordinates}
As explained in the introduction, the identification $G\sim (G/K)\times (G/NA)\sim (G/K)\times K$ leads to the coordinates $(z, b)$ (where $z\in \D$, $b\in G/NA\sim K\sim S^1$) to parametrize points in $G\sim S\D$. The identification $G\sim (G/K)\times (G/NA)$ is $G$-equivariant, and thus the action of $g$ on itself by left-multiplication reads $g \cdot (z, b)=(g \cdot z, g \cdot b)$, where on the first component $G$ acts by isometry on the symmetric space $G/K$, and on the second coordinate $G$ acts on the boundary $G/NA\sim K$.

We denote by $o$ the origin $eK$ in $G/K$, and by $ \langle z, b \rangle $ the
signed distance to $o$ of the horocycle through the points $z \in
\D, b \in B$. This notation follows \cite{He}, but we warn again that our normalization of the metric differs by a factor $2$ from Helgason's.

We also use the identification $G\equiv S
\D \equiv  B^{(2)}\times \R.
$
It is based on the identification of $B^{(2)}=B \times B \backslash
\Delta$ (where $\Delta \subset B \times B$ is the diagonal) with
the space of oriented geodesics of $\D$. To $(b', b)\in B \times B \setminus
\Delta$ there corresponds a geodesic $\gamma_{b', b}$ whose
forward endpoint at infinity equals $b$ and whose backward
endpoint equals $b'$. The choice of time parameter is defined so
that  $(b', b, 0)$ is the closest point $z_{b', b} $ to the origin $o$ on
$\gamma_{b', b}$, and $(b', b, t)$ denotes the point $t$ units from
$(b', b, 0)$ in signed distance towards $b$. We note that
$\langle z_{b, b^\prime},b\rangle=\langle z_{b,
b^\prime},b^\prime\rangle$.  We define
$g(b', b) \in PSU(1,1)$ to be the unique element satisfying
\begin{itemize}

\item[]  $g(b', b) \cdot  1 = b$,

\item[] $g(b',b) \cdot  (-1) = b',$

\item[] $g(b', b) \cdot o = z_{b', b}$,

\end{itemize}
where $1, -1, b, b'$ are points of the boundary $B=S^1$, seen as the unit circle in $\C$ in the disc model.
\noindent We thus identify $  B^{(2)}\times \R\simeq G $ by $(b', b, t) \mapsto g(b', b) a_t$.
In these coordinates, the action of $g\in G$ by left-multiplication is expressed by
\begin{equation}\label{e:gaction}g\cdot(b', b, t)=\left(g\cdot b', g \cdot b, t+\frac{\la g\cdot o, g\cdot b\ra- \la g\cdot o, g\cdot b'\ra}2\right).
\end{equation}


 We will need the following formula~:
 \begin{lem}
\label{IDEN}  $$\log \frac{|b - b'|}2 + \langle g(b',b)a_{\tau} n_u\cdot o, b
\rangle = \tau,$$
where $b, b'$ are seen as elements of $S^1\subset \C$, and $|b-b'|$ is their usual distance in $\C$. \end{lem}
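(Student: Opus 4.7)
The plan is to decompose the desired identity by exploiting the cocycle property of the Busemann function, and then verify each resulting piece separately. The key identity (a consequence of $\langle o, \cdot\rangle \equiv 0$ and the fact that $G$ acts on $\D$ by isometries) is
\[
\langle h\cdot z,\, h\cdot b\rangle \;=\; \langle z, b\rangle + \langle h\cdot o,\, h\cdot b\rangle
\qquad \text{for all } h\in G,\ z\in \D,\ b\in B.
\]
Applying this with $h = g(b',b)$, $z = a_{\tau}n_u\cdot o$, and using the defining properties $g(b',b)\cdot 1 = b$ and $g(b',b)\cdot o = z_{b',b}$, the left-hand side Busemann term becomes
\[
\langle g(b',b)\,a_\tau n_u\cdot o,\, b\rangle \;=\; \langle a_\tau n_u\cdot o,\, 1\rangle \;+\; \langle z_{b',b},\, b\rangle.
\]
So it suffices to show two independent identities: (A) $\langle a_\tau n_u\cdot o, 1\rangle = \tau$, and (B) $\langle z_{b',b}, b\rangle = -\log\frac{|b-b'|}{2}$.

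For (A), I would use the cocycle relation again. Since $A$ fixes the boundary points $\pm 1$, we have $a_\tau\cdot 1 = 1$, and therefore
\[
\langle a_\tau n_u \cdot o,\, 1\rangle \;=\; \langle n_u\cdot o,\, 1\rangle \;+\; \langle a_\tau\cdot o,\, 1\rangle.
\]
The first term vanishes because $N$ stabilizes the point $1\in B$ and its right-orbits are horocycles tangent to $1$: the image $n_u\cdot o$ lies on the same horocycle as $o$, hence has Busemann value $0$. The second term equals $\tau$ by construction — $a_\tau$ is the geodesic flow along the axis through $o$ with forward endpoint $1$, so $a_\tau \cdot o$ lies at signed hyperbolic distance $\tau$ from $o$ toward $1$.

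For (B), I would use the closed-form expression for the Busemann function in the disc model with our metric normalization, namely $e^{\langle z, b\rangle} = (1-|z|^2)/|z-b|^2$ (this is a standard identity, easily verified from the definition together with the fact that $\langle o, b\rangle = 0$). By the rotation invariance of the metric, of the Busemann function, and of the quantity $|b-b'|$ under $z\mapsto e^{i\alpha}z$, it is enough to treat the symmetric case $b = e^{i\theta}$, $b' = e^{-i\theta}$, so that $\gamma_{b',b}$ is the Euclidean circle orthogonal to $S^1$ through these two points and $z_{b',b}$ lies on the positive real axis. An elementary computation (center of the orthogonal circle at $\sec\theta$, radius $\tan\theta$) yields $z_{b',b} = (1-\sin\theta)/\cos\theta$; substituting into the Busemann formula gives $e^{\langle z_{b',b}, b\rangle} = 1/\sin\theta = 2/|b-b'|$, as required.

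The main obstacle is Step (B): it is not deep but requires care with the factor-of-two in the metric normalization $ds^2 = 4|dz|^2/(1-|z|^2)^2$, which is where the $|b-b'|/2$ (rather than $|b-b'|$) enters on the nose. Once (A) and (B) are established, summing them gives $\langle g(b',b)a_\tau n_u\cdot o,\, b\rangle = \tau - \log\frac{|b-b'|}{2}$, which is exactly the claimed identity.
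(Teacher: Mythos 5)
Your proof is correct, and its first half coincides with the paper's: both reduce the lemma, via the cocycle identity $\langle h\cdot z, h\cdot b\rangle = \langle z,b\rangle + \langle h\cdot o, h\cdot b\rangle$ applied with $h=g(b',b)$, to the two claims $\langle a_\tau n_u\cdot o,1\rangle=\tau$ and $\langle z_{b',b},b\rangle=-\log\frac{|b-b'|}{2}$. (The paper simply asserts the first claim inside a displayed identity; your horocycle/geodesic argument for it is a welcome elaboration and is correct.) Where you genuinely diverge is in the second claim. The paper deduces it from the conformal-distortion identity $|g\beta-g\beta'|^2\,e^{\langle g\cdot o,g\cdot\beta\rangle+\langle g\cdot o,g\cdot\beta'\rangle}=|\beta-\beta'|^2$ evaluated at $\beta=1$, $\beta'=-1$, combined with the symmetry $\langle z_{b',b},b\rangle=\langle z_{b',b},b'\rangle$, which immediately gives $4=|b-b'|^2e^{2\langle z_{b',b},b\rangle}$; this is coordinate-free and reuses an identity the paper needs elsewhere (e.g.\ for the Haar-measure formula in the $(b',b,t)$ coordinates). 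You instead compute directly in the disc model from $e^{\langle z,b\rangle}=(1-|z|^2)/|z-b|^2$, reducing by rotation to $b=e^{i\theta}$, $b'=e^{-i\theta}$ and finding $z_{b',b}=(1-\sin\theta)/\cos\theta$, whence $e^{\langle z_{b',b},b\rangle}=1/\sin\theta=2/|b-b'|$; I checked the algebra and it is right (the degenerate case $\theta=\pi/2$, where $z_{b',b}=o$, is handled trivially or by continuity). Your route is more elementary but normalization-sensitive; the paper's is shorter once \eqref{BID} is granted. Both are valid proofs.
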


\begin{proof} To prove this, we use the identity
$$\langle g
a_t n_u\cdot o, g \cdot 1 \rangle = \langle a_t n_u\cdot o, 1 \rangle +
\langle g \cdot o, g \cdot 1 \rangle = t + \langle g \cdot o, g
\cdot 1 \rangle  $$ to  reduce the lemma  to the claim that
\begin{equation} \label{CLAIM} \langle g(b',b) \cdot o, g(b',b) \cdot 1 \rangle = - \log \frac{|b
- b'|}2. \end{equation}  However, a basic identity gives
\begin{equation}\label{BID}  |g \beta - g \beta'|^2  e^{  \langle g \cdot o, g\cdot \beta \rangle +
\langle g \cdot o, g \cdot \beta' \rangle} = |\beta - \beta'|^2.
\end{equation}
If we let $\beta = 1, \beta' = -1$ (so that $g(b', b) 1 = b, g(b',
b) (-1) = b'$) and recall that $\langle g(b', b)\cdot o, b \rangle =
\langle g(b', b)\cdot o, b' \rangle$, then (\ref{BID}) implies
\begin{equation}\label{BID2}  4 = |b - b'|^2  e^{ [ \langle z_{b', b}, b \rangle +
\langle z_{b, b'}, b' \rangle]} = |b - b'|^2  e^{2  \langle z_{b',
b}, b \rangle},
\end{equation}
which completes the proof of (\ref{CLAIM}) and hence of the lemma.

\end{proof}

\subsection{Time reversal}

Time reversal is the map $\iota~:(x, \xi) \to (x, - \xi)$ on the
tangent bundle. In the coordinates $(b', b, t)$ it takes the
form, \begin{equation} \label{TBBT} \iota(b', b, t) =  (b, b', -t).
\end{equation} That is, it reverses the endpoints of the oriented geodesic
$\gamma_{b, b'}$ and preserves the point $z_{b,b'}$ closest to $o$. In the group theoretic picture, time reversal is given by the action of the non-trivial element of the Weyl group,
$g\mapsto gw$ on $G$, where $w=  \left( \begin{array}{ll}0 & -1 \\ & \\
1 & 0 \end{array}\right)$.

\subsection{A coordinate change\label{s:calc}}
The formulae below are useful at several places in the paper.
\begin{equation} \label{KANa} n_u = k_u a_{- \log (1 + u^2)}
\bar n_{f(u)}, \end{equation} where $f(u) = \frac{u}{1 + u^2}$ and
where
$$k_u = \begin{pmatrix} \frac{1}{\sqrt{1 + u^2}}
&
\frac{u}{\sqrt{1 + u^2}} \\ & \\
- \frac{u}{\sqrt{1 + u^2}} & \frac{1}{\sqrt{1 + u^2}}
\end{pmatrix} .$$

This comes from the explicit calculation,
\begin{equation} \label{KAN}  \begin{pmatrix} 1 &u \\ & \\
0 & 1 \end{pmatrix}  =    \begin{pmatrix} \frac{1}{\sqrt{1 + u^2}}
&
\frac{u}{\sqrt{1 + u^2}} \\ & \\
- \frac{u}{\sqrt{1 + u^2}} & \frac{1}{\sqrt{1 + u^2}}
\end{pmatrix} \begin{pmatrix} \frac{1}{ \sqrt{1 + u^2}}& 0 \\ & \\
0  &\sqrt{1 + u^2}\end{pmatrix}  \begin{pmatrix} 1  & 0 \\ & \\
 \frac{u}{1 + u^2}  & 1 \end{pmatrix}. \end{equation}

 This formula implies that the element $g=n_u a_t\in G$ corresponds to the endpoints $b=1$,
 $b'=k_u w\in K$, in other words, using the $S^1$ model,
 $$b'=e^{2i\theta}, \qquad\mbox{ where } e^{i\theta}=\frac{u}{\sqrt{1+u^2}}+\frac{i}{\sqrt{1+u^2}}.$$

By calculation, we find $|b'-b|^2=\frac{4}{1+u^2}$ and $db'=\frac1\pi \frac{du}{1+u^2}.$ These calculations also show that
 $$\la n_u a_t, 1\ra=t$$ and
 $$ \la n_u a_t, b'\ra=-t+\log(1+u^2).$$
 For $t=\frac{\log(1+u^2)}2$, we see that $\la n_u a_t, 1\ra= \la n_u a_t, b'\ra$, and thus
 $n_ua_{\frac{\log(1+u^2)}2}=g_{b', b}$ (with $b=1, b'=e^{2i\theta}$ as above).
It follows that $g=n_u a_t\in G$ has the coordinates $(b', b, \tau)=(e^{2i\theta}, 1, t-\frac{\log(1+u^2)}2)$.

\section{Harmonic analysis on the hyperbolic disc and its compact quotients\label{s:HA}}

 \subsection{\label{MEASURES} Poisson 1-form, Haar measure and Plancherel measure}
 We shall denote by $db$ the normalized Haar measure on $K$, identified with the boundary $B$
 or with $S^1$.
The Poisson 1-form is defined by
\begin{equation} \label{P1FORM} P(z, b) db = e^{\langle z, b \rangle} db.
\end{equation}
Using the identities
\begin{equation}
\label{HELGID} \langle g  \cdot  z, g  \cdot  b \rangle = \langle
z, b \rangle + \langle g \cdot o, g \cdot  b \rangle,
\end{equation}
and \begin{equation}\label{e:derivative} \frac{d}{db} g \cdot b = e^{- \langle  g
\cdot o, g \cdot b \rangle}, \end{equation} it follows that
\begin{equation} P(g  \cdot  z, g \cdot   b) d (g \cdot b) = P(z, b) db.
\end{equation}
Haar measure on $G$ is denoted $d g$. In terms of $z, b$
coordinates it is given by
\begin{equation} \label{HAAR} d g = P(z, b)\Vol(dz) db,
\end{equation}
where $\Vol(dz)$ is the hyperbolic area form. Under the identification $G\sim S\D$, the Haar measure on $G$ is the same as
Liouville measure on $S\D$.
In the $(b', b, t)$ coordinates, Haar measure reads as follows:

\begin{lem} \label{POISBB} Under the identifications $(b, b', t) \simeq g=g(b, b') a_t \simeq (z, b),  $ we have $$dg = P(z, b)\Vol(dz)db=4\pi\frac{db \otimes db'}{|b
- b'|^2} \otimes dt . $$
\end{lem}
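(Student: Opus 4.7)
I would prove Lemma \ref{POISBB} as follows. The first equality $dg=P(z,b)\Vol(dz)db$ is exactly \eqref{HAAR}, so I focus on the second equality. The strategy is standard: verify that $\frac{db\otimes db'}{|b-b'|^2}\otimes dt$ is left-$G$-invariant on $G\simeq B^{(2)}\times\R$, invoke uniqueness of Haar on the unimodular group $G$ to conclude that $dg$ differs from it by a positive scalar, and then determine that scalar by matching at a convenient basepoint.

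For the $G$-invariance, I combine \eqref{e:derivative}, which gives $d(g_0\cdot b)=e^{-\langle g_0\cdot o,g_0\cdot b\rangle}db$ (and the analogous formula for $b'$), with the identity \eqref{BID}, $|g_0\cdot b-g_0\cdot b'|^2=e^{-\langle g_0\cdot o,g_0\cdot b\rangle-\langle g_0\cdot o,g_0\cdot b'\rangle}|b-b'|^2$. The exponential factors cancel exactly, so $\frac{db\,db'}{|b-b'|^2}$ is $G$-invariant on $B^{(2)}\simeq G/A$. Moreover, \eqref{e:gaction} shows that the left action of $g_0$ translates $t$ by a quantity depending only on $(g_0,b,b')$, so $dt$ is preserved. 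By uniqueness of Haar, $dg=C\cdot\frac{db\,db'}{|b-b'|^2}\otimes dt$ for some $C>0$.

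To pin down $C$, I equate this with \eqref{HAAR}. Lemma \ref{IDEN} with $u=0$ combined with identity \eqref{BID2} yields $P(z,b)=e^{\langle z,b\rangle}=2e^{t}/|b-b'|$, so the identity reduces to $\Vol(dz)=\frac{Ce^{-t}}{2|b-b'|}\,db'\,dt$ (for $b$ fixed). At the basepoint $b=1$, $b'=-1$, $t=0$ (so $z=o$, $|b-b'|=2$) this becomes $\Vol(dz)=(C/4)\,db'\,dt$. Parametrizing $b'=e^{i(\pi+\psi)}$ with $db'=d\psi/(2\pi)$, I compute the two tangent vectors at $z=o$: $\partial_t z=1/2$ along the real diameter, and $\partial_\psi z=-i/4$, obtained by tracking the foot of perpendicular $z_{b',1}$ to $o$ as the geodesic $\gamma_{b',1}$ tilts away from the real axis. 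These are orthogonal, with hyperbolic lengths $1$ and $1/2$ respectively (using $ds^2=4|dz|^2$ at $z=0$), so $\Vol(dz)=\tfrac12\,d\psi\,dt=\pi\,db'\,dt$ at the basepoint, and comparison gives $C=4\pi$.

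The main technical obstacle is the Jacobian calculation $\partial_\psi z=-i/4$, a short but finicky disc-model computation. A cleaner alternative uses the Iwasawa-like coordinates $g=k_\theta n_u a_t$ from the end of \S\ref{s:calc}: the paper already supplies $|b-b'|^2=4/(1+u^2)$, $db'=du/(\pi(1+u^2))$, and $\tau=t-\tfrac12\log(1+u^2)$ for $g=n_u a_t$, which when combined with \eqref{HAAR} evaluated at $(\theta,u,t)=(0,0,0)$ also produces $C=4\pi$ with fewer hand-computed Jacobians.
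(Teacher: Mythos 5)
Your proof is correct and follows the same route the paper sketches: you establish the $G$-invariance of $\frac{db\,db'}{|b-b'|^2}\otimes dt$ from \eqref{e:derivative}, \eqref{BID} and \eqref{e:gaction} and then fix the constant by a pointwise comparison, which is exactly the paper's argument except that the paper explicitly declines to carry out the normalization (``we leave aside the calculation of the normalization factor\dots which can be done thanks to the formulae in \S \ref{s:calc}''). Your evaluation of the constant is also correct --- in particular the Jacobian $\partial_\psi z=-i/4$ checks out, since the point of $\gamma_{e^{i(\pi+\psi)},1}$ closest to $o$ is $-i\psi/4+O(\psi^2)$, which together with $\partial_t z=1/2$ and the normalization $db'=d\psi/2\pi$ yields $C=4\pi$.
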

The fact that the measure $\frac{db \otimes db'}{|b - b'|^2}
\otimes dt  $ is invariant under the action of $G$ follows from
formulae \eqref{e:gaction}, \eqref{BID}, \eqref{e:derivative}. We
leave aside the calculation of the normalization factor, which can
be done thanks to the formulae in \S \ref{s:calc}, but does not
play a very important role.

Non-Euclidean Fourier analysis is based on the family of
non-Euclidean ``plane waves'' $$e_{\nu, b}(z)\defi e^{(\frac12+\nu)\langle
z, b \rangle},$$
$\nu\in\C, b\in B$.
They are complex-valued  eigenfunctions of the
laplacian~:\begin{equation*}  \Lap e_{\nu, b}  = - \left(\frac14 -\nu^2\right) e_{\nu, b}.\end{equation*}  The $L^2$ spectral decomposition of the laplacian on $\D$ only requires the tempered spectrum,
that is, the case $\nu=ir$ where $r\in\R$ (corresponding to a laplacian eigenvalue $ \frac14 -\nu^2 \geq \frac14$).
The Helgason-Fourier transform of a function $f$ on $\D$ is defined by
$$\fcal f(b, r)=\int_{\D} e^{(\frac12-ir)\la z, b\ra}f(z)\Vol(dz),$$
$b\in B, r\in\R$. The Fourier transform automatically has the following symmetry property under $r\mapsto -r$~:
\begin{equation}\label{e:Fsym}\int_B \fcal f(b, r) e^{(\frac12+ir)\la z, b\ra}db =\int_B \fcal f(b, -r) e^{(\frac12-ir)\la z, b\ra}db,
\end{equation} for all $z\in\D$ and $r\in\R$.
Plancherel measure is the measure on $\R$ defined by
\begin{equation}\label{PLAN}  dp(r) = \frac{1}{2\pi}r \tanh( \pi r) dr,\end{equation}
and the Fourier inversion formula reads
$$f(z)=\frac12\int_\R \int_B \fcal f(b, r)e^{(\frac12+ir)\la z, b\ra}dp(r) db=\int_{\R_+} \int_B \fcal f(b, r)e^{(\frac12+ir)\la z, b\ra}dp(r) db,$$
see \cite{He}.
We have the Plancherel formula for $f\in L^2(\D)$, $\norm{f}_{L^2(\D, {\rm Vol})}
=\norm{\fcal f}_{L^2(B\times \R_+, db\times dp(r))}.$

 \subsection{Integral representation of eigenfunctions}
 We now consider Fourier analysis on the quotient $\X$ of $\D$ by a
discrete co-compact subgroup $\Gamma \subset G$.

\begin{thm}\label{Helga} (\cite{He}, Theorems 4.3 and 4.29; see also \cite{He2})  Let $\phi$ be an
 eigenfunction with exponential growth, for the
eigenvalue $\lambda=-\left(\frac14-\nu^2\right)\in \C$. Then there exists a
distribution $ T_{\nu, \phi} \in {\mathcal D}'(B)$ such that
$$\phi(z) = \int_B e^{(\frac12 + \nu) \langle z,
b \rangle } T_{\nu, \phi}(db),$$ for all $z\in\D$. The
distribution is unique if $\frac{1}{2} + \nu \not= 0, -1,
-2,\cdots$.
\end{thm}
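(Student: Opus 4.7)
The plan is to reduce the problem to matching $K$-type Fourier coefficients of $\phi$ against those of the plane waves $e_{\nu,b}$, and to obtain $T_{\nu,\phi}$ by dividing Fourier coefficients. The overall strategy is classical (in the spirit of Helgason--Kashiwara--Kowata--Minemura--Okamoto--Oshima--Tanaka), but for this setting it can be carried out very concretely using geodesic polar coordinates around the basepoint $o$.

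First, I would write $z\in\D$ in geodesic polar coordinates $(r,\theta)$ centered at $o$, and expand
\begin{equation*}
\phi(z)=\sum_{n\in\Z}a_n(r)\,e^{in\theta}.
\end{equation*}
Invariance of $\Lap$ under $K$ decouples the equation $\Lap\phi=-(\tfrac14-\nu^2)\phi$ into an ODE in $r$ for each $a_n$, a hypergeometric-type equation with regular singular points at $r=0$ and $r=\infty$. Its one-parameter family of solutions regular at $o$ is explicit in terms of the Gauss hypergeometric function. Second, I would compute the same $K$-type expansion of the plane wave
\begin{equation*}
e_{\nu,b}(z)=e^{(\frac12+\nu)\langle z,b\rangle},\qquad b=e^{i\psi},
\end{equation*}
obtaining $e_{\nu,b}(z)=\sum_{n}\Phi_n(r,\nu)\,e^{in(\theta-\psi)}$ with $\Phi_n(r,\nu)$ an explicit hypergeometric function, again regular at $o$ (since $e_{\nu,b}$ itself is smooth at $o$).

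Since both $a_n(r)$ and $\Phi_n(r,\nu)$ solve the \emph{same} ODE and are regular at $o$, they must be proportional, so the ratio $c_n(\nu)=a_n(r)/\Phi_n(r,\nu)$ is independent of $r$ as soon as $\Phi_n(\cdot,\nu)\not\equiv0$. One checks using standard hypergeometric identities that $\Phi_n(\cdot,\nu)\equiv0$ precisely when $\tfrac12+\nu\in\{0,-1,-2,\ldots\}$ and $|n|$ is larger than some explicit bound, which is exactly the exceptional case excluded from the uniqueness statement. Defining
\begin{equation*}
T_{\nu,\phi}=\sum_{n\in\Z}c_n(\nu)\,e^{in\psi}
\end{equation*}
as a formal Fourier series on $B$, I would then verify that exponential growth of $\phi$ translates (via the known large-$r$ asymptotics of $\Phi_n(r,\nu)$, which grow like $e^{(\frac12+\mathrm{Re}\,\nu)r}$ times algebraic factors in $n$) into polynomial growth of the $c_n(\nu)$ in $n$. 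This is the key analytic estimate: it guarantees that $T_{\nu,\phi}\in\mathcal{D}'(B)$.

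Finally, integrating the plane-wave expansion of $e_{\nu,b}$ against $T_{\nu,\phi}$ term-by-term (justified by the polynomial bound on $c_n$ and the Schwartz decay of the $K$-type coefficients of test functions on $B$) reproduces $\phi(z)$, giving the desired integral representation. Uniqueness follows from the same Fourier matching: any candidate $T$ has its Fourier coefficients forced to equal $c_n(\nu)$ as long as $\Phi_n(r,\nu)\not\equiv0$ for all $n$, which is exactly the condition $\tfrac12+\nu\notin\{0,-1,-2,\ldots\}$. The main obstacle I expect is the growth estimate on $c_n(\nu)$: one needs uniform control of the asymptotic behavior of the hypergeometric functions $\Phi_n(r,\nu)$ in both $r$ and $n$, which requires either careful use of integral representations of $_2F_1$ or an argument via the Harish-Chandra $\mathbf{c}$-function; the delicate point is that for $\mathrm{Re}\,\nu$ large the two linearly independent asymptotics at infinity nearly coincide, and one must isolate the coefficient of the dominant term.
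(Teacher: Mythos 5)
This theorem is not proved in the paper at all: it is quoted verbatim from Helgason (\cite{He}, Theorems 4.3 and 4.29), so there is no internal argument to compare against. Your sketch is essentially Helgason's own proof for the disc: decompose into $K$-types, observe that the radial part of each $K$-type of an eigenfunction and of the plane wave $e_{\nu,b}$ solve the same hypergeometric ODE with a one-dimensional space of solutions regular at $o$, define $T_{\nu,\phi}$ by the resulting coefficient ratios $c_n(\nu)$, and convert the exponential growth of $\phi$ into a polynomial bound on $c_n(\nu)$. The structure is right and the exceptional set $\tfrac12+\nu\in\{0,-1,-2,\dots\}$ is correctly identified as the locus where some generalized spherical function $\Phi_n(\cdot,\nu)$ vanishes identically.

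Two cautions on the one step you defer. First, your stated large-$r$ asymptotics are off by the $\rho$-shift: in this normalization $\Phi_n(r,\nu)$ grows like $e^{(\mathrm{Re}\,\nu-\frac12)r}$ (times the Harish-Chandra-type coefficient), not $e^{(\mathrm{Re}\,\nu+\frac12)r}$; compare the paper's statement $\varphi_o(z)\asymp d(z,o)e^{-d(z,o)/2}$. Second, and more importantly, the polynomial bound on $c_n(\nu)$ cannot be extracted at a fixed radius, since there $\Phi_n(r,\nu)\sim(\tanh r)^{|n|}$ decays exponentially in $n$; the classical argument evaluates $a_n(r)/\Phi_n(r,\nu)$ at a radius $r(n)\approx\log|n|$, where the lower bound on $\Phi_n$ costs only an algebraic factor in $n$ and the hypothesis $|\phi|\le Ce^{Cd(o,z)}$ contributes $e^{Cr(n)}=|n|^{C}$. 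With that adjustment your outline is the standard proof of the cited result.
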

The distribution $T_{\nu, \phi}$ is usually called the {\em boundary values} of $\phi$ (for the spectral parameter $\nu$), in analogy with the theory of boundary values of harmonic functions.
This theorem applies, in particular, to a $\Gamma$-invariant eigenfunction of the laplacian, since such a function is bounded. By uniqueness of $T_{\nu, \phi}$, we see that $\phi$ being $\Gamma$-invariant is equivalent to
\begin{equation}\label{e:conform}\gamma^{-1}_\sharp T_{\nu, \phi}(db)=e^{-(\frac12 +\nu)\la \gamma\cdot o, \gamma \cdot b\ra}T_{\nu,\phi}(db)
\end{equation}
 for $\gamma\in\Gamma$ and $b\in B$.

\subsection{\label{FUND} Fundamental domains and cutoffs for $\Gamma \backslash G$}

 We  denote by $\dcal$ a fundamental
domain for the action of $\Gamma$ on $\D=G/K$. We use
the same notation for the fundamental domain $\dcal$ lifted to $G$.

When dealing with integrals against irregular distributions, it is
convenient to replace the characteristic function of a fundamental
domain by a smooth (compactly supported) cutoff $\chi$ on $G$ satisfying
$$\Pi\chi=1,$$
where we define the periodization operator $\Pi$ by
$$\Pi\chi(g)=\sum_{\gamma\in\Gamma}\chi(\gamma g).$$
Existence of such functions $\chi$ is obvious. We will call such a function $\chi$ a {\em smooth fundamental cutoff} for the action of $\Gamma$ on $G$. When needed, we may assume that $\chi$ is a right-$K$-invariant functions, that is, $\chi(z, b)=\chi(z)$.

Let $\chi, \chi'$ be two smooth fundamental cutoffs.
We will use repeatedly the following~: if $T$ is a $\Gamma$-invariant distribution on $G$, then  $T(\chi f)=T(\chi' f)$, for any $f \in C^\infty(\Gamma \backslash G)$ (seen as a $\Gamma$-invariant function on $G$).
To see this, write
\begin{equation} T(\chi f)=T(\chi f .(\Pi\chi'))= T(\Pi(\chi f) .\chi')=T(f\chi').
\end{equation}

\section{\label{PSI} Pseudo-differential calculus on the
Poincar\'e disc}

 Throughout this article,   we use a special hyperbolic
calculus of pseudodifferential operators introduced in \cite{Z3}.
In the hyperbolic calculus,  a
complete symbol $a(z, b, r)$ ($(z, b)\in \D\times B, r\in\R$) is quantized by the operator $\Op(a)$ on $\D$
defined  by
$$\Op (a) e_{ir, b}(z) = a(z, b, r) e^{(\frac12 +ir)\langle z, b
\rangle}$$
for $z\in \D, b\in B$ and $r\in\R_+$.
By the non-Euclidean Fourier inversion formula, we define $\Op(a)$
on  $C_c^{\infty}(\D)$:
$$\Op(a)u(z)=\int_B \int_{\R_+} a(z, b, r)e^{(\frac12+ir)\langle z, b \rangle} \fcal u(b, r)
dp(r)db.$$ We recall that  the measure $d p(r)=\frac{1}{2\pi}r \tanh( \pi r) dr$  is the Plancherel measure for $G$ (\ref{PLAN}).  At the formal level, the
kernel of $\Op(a)$ is thus given by
\begin{equation} \label{KERNEL} K_a(z,w) =  \int_B \int_{\R_+} a(z, b, r) e^{( \frac12+ir )\langle z,
b \rangle}  e^{( \frac12- ir )\langle w, b\rangle} dp(r) db.
\end{equation}

Now assume that $a$ has the following symmetry w.r.t. the transformation $r\mapsto -r$:
\begin{equation}\label{e:Wsym} \int a(z, b, r)e^{(1/2+ir)\la z, b\ra}e^{(1/2-ir)\la w, b\ra}db
= \int a(z, b, -r)e^{(1/2-ir)\la z, b\ra}e^{(1/2+ir)\la w, b\ra}db
\end{equation}
for all $z, w\in \D$ and $r\in\R$.
It then follows from the Plancherel formula for the
hyperbolic Fourier transform that we can recover the symbol from
the kernel by
\begin{equation} \label{SYMBOL} a(z, b, r) = e^{-( \frac12+ir )\langle z, b
\rangle} \int_{\D} K_a(z,w) e^{(\frac12+ir) \langle w, b
\rangle} \Vol(dw)
\end{equation}
for all $r\in\R$. In this case, formula \eqref{KERNEL} holds with $\int_{\R_+}$ replaced by $\int_{\R_-}$.
We now discuss several particular classes of symbols $a$.

\subsection{Hilbert-Schmidt operators on $\D$ and $L^2$ symbols}
We recall that if ${\mathcal H}$ is a Hilbert space, the
algebra of Hilbert-Schmidt operators on ${\mathcal H}$ is the
algebra of  operators $A$ for which the trace $\Tr A A^\dagger$ is finite; it is endowed
with the inner product $\langle A, B
\rangle_{\hs}\defi \Tr A B^\dagger$. It is well known that the Hilbert-Schmidt
operators on ${\mathcal H}=L^2(M, d\nu)$ for any measure space form a Hilbert
space isomorphic to $L^2(M \times M, d\nu \times d\nu)$. In the case $M=\D$
 and $\nu=\Vol$, we will denote $\hs(\D)$ the space of Hilbert-Schmidt operators on $\D$.
We denote $L_W^2(G \times \R, dg \times dp(r))$ the space of functions in $L^2(G \times \R, dg \times dp(r))$
 that have the symmetry \eqref{e:Wsym} with respect to the Weyl group. We endow it with the norm
\begin{multline*} \norm{a}_{L_W^2}^2=\frac12 \int_{\D} \int_B \int_{\R} |a(z, b, r)|^2 P(z, b) \Vol(dz)
db dp(r)\\=\int_{\D} \int_B \int_{\R_+} |a(z, b, r)|^2 P(z, b) \Vol(dz)
db dp(r).
\end{multline*}

The following is a consequence of the Plancherel formula~:
\begin{prop} \label{HSD} The quantization map $a \to \Op(a)$
defines a unitary equivalence
$$L^2_W(G \times \R, dg \times dp) \simeq \hs(\D). $$ In other words
$$||\Op(a)||^2_{\hs(\D)
} =\frac12 \int_{\D} \int_B \int_{\R} |a(z, b, r)|^2 P(z, b) \Vol(dz)
db dp(r). $$
\end{prop}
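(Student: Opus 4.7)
The plan is a direct Plancherel computation for the Helgason-Fourier transform, applied fibrewise in one of the two arguments of the Schwartz kernel $K_a$, together with careful bookkeeping of the Weyl symmetry $r\leftrightarrow -r$ that appears in the definition of $L^2_W$.

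First, using the standard identification of Hilbert-Schmidt operators on $L^2(\D,\Vol)$ with $L^2(\D\times\D,\Vol\otimes\Vol)$, I start from
$$\norm{\Op(a)}_{\hs(\D)}^2 \;=\; \int_\D\int_\D |K_a(z,w)|^2\,\Vol(dz)\,\Vol(dw).$$
Fix $z\in\D$ and view $K_a(z,\cdot)$ as an element of $L^2(\D)$. Rewriting \eqref{SYMBOL} with $r$ replaced by $-r$ identifies its Helgason-Fourier transform,
$$\fcal_w[K_a(z,\cdot)](b,r)\;=\;a(z,b,-r)\,e^{(\frac12-ir)\la z,b\ra},$$
whose squared modulus equals $|a(z,b,-r)|^2 e^{\la z,b\ra}=|a(z,b,-r)|^2 P(z,b)$. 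Applying the Plancherel formula for $\fcal$ in the $w$-variable and integrating over $z$ then gives
$$\norm{\Op(a)}_{\hs(\D)}^2 \;=\; \int_\D\int_B\int_{\R_+}|a(z,b,-r)|^2 P(z,b)\,\Vol(dz)\,db\,dp(r),$$
and the substitution $r\mapsto -r$ (using that the Plancherel density $r\tanh(\pi r)$ is even) rewrites this as the integral of $|a(z,b,r)|^2 P(z,b)$ over $\D\times B\times\R_-$.

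To complete the identity I invoke the Weyl symmetry \eqref{e:Wsym}. Its content is that, for each fixed $z$, the symbol values $a(z,\cdot,r)$ and $a(z,\cdot,-r)$ are related by the standard Knapp-Stein intertwiner between the spherical principal-series representations with parameters $\pm ir$, which is an isometry with respect to the pairing induced by $P(z,b)\,db$. Equivalently, pairing \eqref{e:Wsym} with its conjugate in $w$ and using the $(w,w')$-symmetry of the reproducing kernel $\int_B e^{(\frac12+ir)\la w,b\ra}e^{(\frac12-ir)\la w',b\ra}db$ shows that the quantity $\int_\D\int_B|a(z,b,r)|^2 P(z,b)\,\Vol(dz)\,db$ is even in $r$. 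The $\R_-$-integral above therefore equals the corresponding $\R_+$-integral, and their common value is $\frac12$ times the full $\R$-integral, producing the stated norm formula.

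Surjectivity, and hence the full unitary isomorphism, follows automatically: for any $A\in\hs(\D)$ with kernel $K\in L^2(\D\times\D)$, define $a$ via \eqref{SYMBOL}; the $(w,w')$-symmetry of the reproducing kernel shows that this $a$ automatically satisfies \eqref{e:Wsym}, hence lies in $L_W^2$, and the computation above shows $\Op(a)=A$ isometrically. The main obstacle is precisely the bookkeeping around the Weyl symmetry $r\leftrightarrow -r$: the factor $\frac12$ in the definition of $\norm{\cdot}_{L^2_W}$ is placed exactly to absorb the double-counting it produces, and a careless Plancherel application would give the wrong constant.
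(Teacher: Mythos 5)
Your proof is correct and is precisely the Plancherel computation the paper alludes to (the paper gives no details beyond ``a consequence of the Plancherel formula''): identify the fibrewise Helgason--Fourier transform of $K_a(z,\cdot)$ as $a(z,b,-r)e^{(\frac12-ir)\la z,b\ra}$ via \eqref{SYMBOL}, apply Plancherel in $w$, and integrate in $z$. The one step where you lean on an external fact --- that $\int_{\D}\int_B |a(z,b,r)|^2 P(z,b)\,\Vol(dz)\,db$ is even in $r$ for symbols obeying \eqref{e:Wsym}, which you justify by unitarity of the normalized intertwiner --- is exactly the identity $\frac12\int_{\R}=\int_{\R_+}$ that the paper already builds into its displayed definition of $\norm{\cdot}_{L^2_W}$, so your bookkeeping of the factor $\frac12$ is consistent with the paper's conventions.
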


It is then clear that the time evolution $V^t$ \eqref{QEXACT} defines a unitary operator on $L_W^2(G \times \R, dg \times dp)$.

\subsection{Schwartz class and associated symbols\label{s:Schw}}
Schwartz functions on $G$ were first defined by Harish-Chandra \cite{HC66}; the definition was extended to $G/K$ by Eguchi and his collaborators  \cite{Eg74, Eg79}.
Writing the hyperbolic disc as $G/K$, $f$ belongs to the Schwartz space $\ccal^p(G/K)$ (for $0<p\leq 2$) if and only if
$f$ is a function on $G$ which is right-$K$-invariant, and
$$\sup_{g\in G} \varphi_o(gK)^{-2/p}(1+ d(gK, o))^q |LRf(g)|<+\infty,$$
for any $q>0$, and for any differential operators $L, R$ on $G$ which are respectively left- and right-invariant. Here $\varphi_o$ stands for the spherical function on $G/K$, $\varphi_o(z)=\int e^{\frac12\la z, b\ra}db$. It satisfies $\varphi_o(z)\asymp  d(z, o)e^{-d(z, o)/2}$ as the hyperbolic distance $d(z, o)\To +\infty$.
Functions on $\ccal^p(G/K)$ are, in particular, in $L^p$ (they are sometimes called Schwartz functions of $L^p$-type).

The Fourier transforms of Schwartz functions of $L^p$-type were characterized by Eguchi \cite{Eg74, Eg79}~: letting $\epsilon=\epsilon(p)=\frac2p-1$,
$\fcal \left(\ccal^p(G/K)\right)$ coincides with the space $\ccal( B\times \R^{\epsilon})_W$ of functions $u$ on $ B\times \R$ such that\\
-- $u$ extends holomorphically to the strip $\R^\epsilon=\{|\Im m(r)|<\frac{\epsilon}2\}$ (this condition is empty for $\epsilon =0$);\\
-- on this strip (or, in the case $\epsilon =0$, on the real axis), we have a bound
\begin{equation}\label{e:schw}\sup_{(b, r)}(1+|r|)^q \left|\frac{\partial^\alpha}{\partial r^\alpha} D u(b, r)\right|<+\infty,\end{equation}
for all $q>0$, every integer $\alpha$, and every $K$-left-invariant differential operator $D$ acting on $B$ (here we use the identification $B\sim K$);\\
-- besides, $u$ must satisfy the symmetry \eqref{e:Fsym} (this symmetry condition with respect to the Weyl group is indicated by the subscript $_W$).

 It is clear from this characterization that the Schwartz space $\ccal^p(G/K)$
is stable under $e^{it\frac{\Lap}2}$.

We now define the space $\kcal^{p, q}(G/K\times G/K)$ (resp. $\kcal_{p, q}(G/K\times G/K)$, $\kcal^p_{\, q}(G/K\times G/K)$, $\kcal_p^{\, q}(G/K\times G/K)$) of kernels of operators sending $\ccal^p(G/K)$
continuously
to $\ccal^q(G/K)$ (resp. $(\ccal^p(G/K))'$
to $(\ccal^q(G/K))'$, $\ccal^p(G/K)$
to $(\ccal^q(G/K))'$, $(\ccal^p(G/K))'$
to $\ccal^q(G/K)$). We denote the corresponding symbol classes by $\scal^{p, q}(G/K\times B\times \R)_W$, $\scal_{p, q}(G/K\times B\times \R)_W$, $\scal^{p}_{\, q}(G/K\times B\times \R)_W$ and so on.

All these classes of operators are obviously stable under conjugation by $e^{it\frac{\Lap}2}$, and thus $V^t$ preserves the corresponding symbol classes.

We will in particular consider the space $\kcal_{\infty}^{\,
\infty}(G/K\times G/K)$ of ``smoothing'' operators, sending
$(\bigcap \ccal^p(G/K))'$ to $\bigcap \ccal^p(G/K)$.
 Corresponding symbols $a(z, b, r)$ are characterized by the fact that $a(z, b, r)e^{(\frac12+ir)\la z, b\ra}$ belongs to
$\bigcap_\epsilon \bigcap_p \ccal( B\times \R^{\epsilon};
\ccal^p(G/K))_W$ (i.e. functions $a(z, b, r)$ with the $\ccal(
B\times \R^{\epsilon})$-regularity
 in the $(b, r)$ variables, taking values in $\ccal^p(G/K)$). We will denote this space of
 ``smoothing'' symbols by \begin{equation}
 \label{SCAL} \scal_{\infty}^{\, \infty}: = \scal_{\infty}^{\, \infty}(G/K\times B\times \R)_W
 : = \bigcap_\epsilon \bigcap_p \ccal( B\times \R^{\epsilon};
\ccal^p(G/K))_W. \end{equation}

\subsection{$\Op(a)$ and $\Op_{\Gamma}(a)$}

A key point of the non-Euclidean pseudo-differential algebra is
that it is automatically left invariant \cite{Z3}. We say that a symbol $a$
is $\Gamma$-invariant if $a(\gamma \cdot z, \gamma  \cdot b, r) = a(z, b, r)$.  Denote $t_g$ the action of $g\in G$ on functions on $G/K$, defined by $t_g f(z)=f(g^{-1}z)$.
We recall from \cite{Z3} that $a$ being $\Gamma$-invariant is equivalent to having $[t_{\gamma}, \Op(a)] = 0$ for all $\gamma \in\Gamma$. This commutation relation is also equivalent to the fact that $K_a(\gamma  \cdot z,\gamma  \cdot w) =
 K_a(z,w)$.
In this case, one may view $\Op(a)$ in either
of two ways: as an operator on $C_c(\D)$ or as an operator
 on $\Gamma$-invariant functions. The operators differ in the
 domains they are given. When we need to emphasize that the action takes place on $\Gamma$-invariant functions we denote the operator by $\Op_{\Gamma}(a)$. More generally, if an operator $A$ defined on functions on $\D$ commutes with all the translations $t_{\gamma}$ ($\gamma\in\Gamma$), we shall denote by $A_\Gamma$ the same operator acting on $\Gamma$-invariant functions.

 By the decay properties of the spherical function $\varphi_o$, we see that $L^2(\X)$ can be continuously embedded in $(\ccal^p(G/K))'$ if $p\leq 1$. As a result, if the kernel $K(z, w)$ is $\Gamma$-invariant, and is such that $\chi(z)K(z, w)\in \kcal_{p}^{\,p}(G/K\times G/K)$ (where $\chi$ is our smooth fundamental cut-off \S \ref{FUND}), then $K$ defines naturally a bounded operator on the quotient, $L^2(\X)\To L^2(\X)$~: for $\phi\in L^2(\X)$, one can define $K\phi$ by the identity
 $$\la K\phi, \psi\ra_\X\defi \la \chi K\phi, \psi\ra_\D,$$
for all $\psi\in L^2(\X)$ . Besides, this definition does not depend on the choice of the fundamental cut-off $\chi$.

We can rephrase this in terms of symbols. Assume that $a(z, b, r)$ is a $\Gamma$-invariant symbol, and is of the form $a(z, b, r)=\sum_{\gamma\in\Gamma} \tilde a(\gamma \cdot z, \gamma  \cdot b, r)$ where $\tilde a\in \scal_{p}^{\, p}$ and $p\leq 1$ (this holds, for instance, if $\tilde a( z, b, r)\defi\chi(z)  a( z, b, r)$ belongs to $\scal_{p}^{\, p}$). Then we can define the bounded operator
$\Op_\Gamma(a):L^2(\X)\To L^2(\X)$ by
$$\la \Op_\Gamma(a)\phi, \psi\ra_\X\defi \la \Op(\tilde a)\phi, \psi\ra_\D,$$
for $\phi, \psi\in L^2(\X)$. Again, this definition does not depend on the choice of $\tilde a$.

We shall denote by $\Pi$ the periodization operator
$$\Pi \tilde a(z, b, r)=\sum_{\gamma\in\Gamma} \tilde a(\gamma \cdot z, \gamma  \cdot b,
r).$$ The class of symbols in Theorem \ref{INTintrodualINTRO} is
given in:

\begin{defn}\label{PIINFTY}  We denote by  $\Pi \scal_{\infty}^{\,\infty}$ the image of
$\scal_{\infty}^{\,\infty}$ under $\Pi$, where
$\scal_{\infty}^{\,\infty}$ is defined in (\ref{SCAL}). \end{defn}

 For $a\in \Pi
\scal_{\infty}^{\,\infty}$, we can use Helgason's integral
 representation theorem \ref{Helga} to have an alternative formula for the action of a pseudodifferential operator on the quotient:
\begin{equation} \label{OpaGamma} \Op_{\Gamma}(a ) \phi_{j}(z) =
\int_B a(z, b, -i\nu_j) e^{(\frac12 + \nu_j) \langle z, b \rangle } T_{\nu_j} (db). \end{equation}
This expression makes sense for all values $\nu_j\in \C$ since $a(z, b, r)$ has a holomorphic continuation to $r\in\C$.

There is a standard relation between the Schwartz kernel
$K_a(z,w)$ of $\Op(a)$ on $\D$ and the Schwartz kernel
$K_a^{\Gamma}(z,w)$  of $\Op_{\Gamma}(a)$ on $\X$, namely
 $$K_a^{\Gamma}(z,w) = \sum_{\gamma \in \Gamma} K_a(z, \gamma w),
 $$ where the kernel $K_a(z,w)$
on $\D$ is defined in (\ref{KERNEL}).    One has formally
 $$\int_{\X} K_a^{\Gamma}(z,w) f(w) \Vol(dw) = \int_{\D} K_a(z,w)
 f(w)  \Vol (dw)$$
 for any $\Gamma$-automorphic $f$ and $a$.
On the other hand, $K_a^{\Gamma}$  has a distributional
eigenfunction expansion \begin{equation} \label{EFNEXP}
K_a^{\Gamma}(z, w) = \sum_{j, k} \langle \Op_\Gamma(a) \phi_{j},
\phi_{k} \rangle \; \overline{\phi_{j}(z)} {\phi_{k}}(w).
\end{equation}

\section{Intertwining the geodesic flow and the Schr\"odinger group on the universal cover\label{s:UC}}
In this section, we prove the intertwining formula \eqref{e:inter} on the universal cover $\D$. This is done by defining analogues of Wigner and Patterson-Sullivan distributions on $\D$ and by finding an explicit relation between both.

\subsection{``Wigner distributions'' on $\D$.}


\begin{defn}\label{DWIGNER} For $b, b'\in B$ and $\nu, \nu'\in i\R$,
the  Wigner distributions ${ W}_{(\nu, b), (\nu',
b')} \in \dcal'(S\D \times \R)$ are defined formally by:
$$\int_{S\D \times \R} a(z, \tilde{b}, r)  W_{(\nu, b), (\nu', b')}(dz, d\tilde{b}, dr)
= \langle \Op(a) e_{\nu, b},{e_{\nu', b'}}\rangle$$
for $a$ having the symmetry \eqref{e:Wsym}.
\end{defn}

For $b\in B$, we denote
$\delta_b(d\tilde{b}) $ the distribution density on $B$ corresponding to the Dirac mass at $b$, defined by $\int_B f(\tilde b)\delta_b(d\tilde{b})=f(b)$ for every smooth $f$.

\begin{prop} \label{WIGNER} We have:
 $ W_{(\nu, b), (\nu', b')}(dz, d\tilde{b},d r) = e_{\nu, b}(z)
\overline{{e_{\nu', b'}}(z)} \delta_b(d\tilde{b}) \delta_{-i\nu}(dr)\Vol(dz).$

\end{prop}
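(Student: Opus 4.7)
The plan is to verify the proposition by direct computation from Definition \ref{DWIGNER}, since both sides of the claimed identity are distributions on $S\D \times \R$ and it suffices to test them against an arbitrary symbol $a$ (satisfying \eqref{e:Wsym}).

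First, I apply the defining relation \eqref{e:opZ} of the hyperbolic pseudodifferential calculus. Writing $\nu = ir$ with $r \in \R$ (so that $-i\nu = r$), the formula $\Op(a) e_{ir,b}(z) = a(z, b, r) e^{(1/2+ir)\langle z, b\rangle}$ becomes
\begin{equation*}
\Op(a)\, e_{\nu, b}(z) \;=\; a(z, b, -i\nu)\, e_{\nu, b}(z).
\end{equation*}
Here the Weyl symmetry \eqref{e:Wsym} is precisely what guarantees that this formula is well-defined regardless of the sign of $\Im \nu$, i.e.\ whether we use the $r > 0$ or $r < 0$ parametrization of the generalized eigenbasis.

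Next, I expand the sesquilinear pairing as an integral on $\D$ with respect to the hyperbolic volume:
\begin{equation*}
\langle \Op(a)\, e_{\nu, b},\, e_{\nu', b'} \rangle \;=\; \int_{\D} a(z, b, -i\nu)\, e_{\nu, b}(z)\, \overline{e_{\nu', b'}(z)}\, \Vol(dz).
\end{equation*}
The key observation is that this single integral over $\D$ can be rewritten, trivially, as an integral over $\D \times B \times \R$ against Dirac masses: inserting $\int_B \delta_b(d\tilde b)$ to pin the boundary variable to $\tilde b = b$ and $\int_\R \delta_{-i\nu}(dr)$ to pin the spectral parameter to $r = -i\nu$, we obtain
\begin{equation*}
\int_{\D \times B \times \R} a(z, \tilde b, r)\, e_{\nu, b}(z)\, \overline{e_{\nu', b'}(z)}\, \delta_b(d\tilde b)\, \delta_{-i\nu}(dr)\, \Vol(dz).
\end{equation*}
Comparing this with Definition \ref{DWIGNER} identifies the distribution $W_{(\nu,b),(\nu', b')}$ with the claimed expression.

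The argument is essentially tautological once one unpacks the definition, so there is no real obstacle; the only point requiring minor care is to make sure the evaluation $a(\cdot, b, -i\nu)$ makes sense. For $\nu \in i\R$ this is immediate since $-i\nu \in \R$ is in the natural domain of the symbol, and the Weyl symmetry ensures consistency under the reflection $r \mapsto -r$. For non-tempered $\nu$ (not in $i\R$), the formula persists provided $a$ admits the holomorphic extension discussed in \S\ref{s:Schw}, which is implicit in the standing assumption that $K_a$ decays sufficiently fast off the diagonal.
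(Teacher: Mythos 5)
Your proof is correct and follows exactly the paper's argument: the paper likewise notes that for $\nu = ir$ the pairing $\langle \Op(a) e_{\nu,b}, e_{\nu',b'}\rangle$ is immediately, from the definition of $\Op$, the integral $\int_{\D} a(z,b,r)\, e_{ir,b}(z)\overline{e_{\nu',b'}(z)}\Vol(dz)$, which is the claimed distribution tested against $a$. Your additional remarks on the Weyl symmetry and the non-tempered case are harmless elaborations of the same one-line computation.
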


\begin{proof} If $\nu=ir$, it is immediate from the definitions that
$$ \langle \Op(a) e_{\nu, b},
e_{\nu', b'}) = \int_{\D} a(z, b, r) e_{ir, b}(z) \overline{ {e_{\nu', b'}}(z)} \Vol(dz).
 $$

\end{proof}

We  define the {\it Wigner transform}
of a function $a \in C_c^{\infty}(G\times\R)$ obeying the symmetry \eqref{e:Wsym} by
\begin{eqnarray*} \wcal: C_c^{\infty}(G\times\R ) &\to& L^2 \left(B \times i\R \times B \times i\R, db \otimes p(dr) \otimes db'\otimes
p(dr')  \right), \\ \wcal a(\nu, b, \nu', b') &=& W_{(\nu, b), (-\nu',
b')}(a). \end{eqnarray*}
Note the ``minus'' sign in front of $\nu'$. The following proposition proves completeness of the Wigner
distributions.

\begin{prop} \label{WIGNERINV} The Wigner transform extends to $L^2_W(G\times\R,  dg \times dp(r))$ as an isometry and  satisfies the inversion formula,
$$a(z, b, r) = e^{-(\frac12+ir) \langle z, b \rangle}\frac12 \int_B \int_{\R}
e^{(\frac12-ir') \langle z, b' \rangle}  \wcal a(ir, b,i r', b') db'
dp(r'). $$
\end{prop}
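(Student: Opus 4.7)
The plan is to reduce Proposition~\ref{WIGNERINV} to the Helgason-Fourier Plancherel and inversion theorems on $\D$, by regarding $(b,r)$ as a parameter. Starting from Proposition~\ref{WIGNER} and the definition $\wcal a(\nu, b, \nu', b') = W_{(\nu, b), (-\nu', b')}(a)$ with $\nu=ir$, $\nu'=ir'$, an explicit computation gives
\begin{equation*}
\wcal a(ir, b, ir', b') = \int_\D a(z, b, r)\, e^{(\frac12+ir)\langle z, b\rangle}\, e^{(\frac12+ir')\langle z, b'\rangle}\, \Vol(dz).
\end{equation*}
Setting $u_{b,r}(z) \defi a(z, b, r)\, e^{(\frac12+ir)\langle z, b\rangle}$, one recognises this as the Helgason-Fourier transform of $u_{b,r}$ in the $z$-variable evaluated at $(b', -r')$, namely $\wcal a(ir, b, ir', b') = \fcal u_{b,r}(b', -r')$. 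This identification is the conceptual heart of the proof; the rest is a matter of applying known results for $\fcal$ parameterwise and tracking normalisations.

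For the isometry, observe that by \eqref{P1FORM} one has $|u_{b,r}(z)|^2 = |a(z,b,r)|^2 P(z,b)$. The Plancherel theorem for $\fcal$ applied to $u_{b,r}$, combined with the Weyl symmetry~\eqref{e:Fsym} automatically satisfied by $\fcal u_{b,r}$, the evenness of $dp$ and the substitution $r'\mapsto -r'$, gives
\begin{equation*}
\int_\D |a(z,b,r)|^2 P(z,b)\,\Vol(dz) = \tfrac12 \int_B\!\int_\R |\wcal a(ir, b, ir', b')|^2\, dp(r')\, db'.
\end{equation*}
Integrating this identity against $\tfrac12\, db\, dp(r)$ and comparing with the expression for $\norm{a}_{L^2_W}$ from Proposition~\ref{HSD} yields the isometry. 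The inversion formula is then the dual calculation: Fourier inversion for $u_{b,r}$, followed by the change of variable $r'\mapsto -r'$ and the trade $\int_{\R_+} = \tfrac12 \int_\R$ via the Weyl symmetry, produces exactly the stated formula after multiplication by $e^{-(\frac12+ir)\langle z, b\rangle}$.

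Finally, $\wcal$ is well defined on the dense subspace $C_c^\infty(G\times\R)\cap L^2_W$ by the pointwise formula, and the isometric identity above extends it by continuity to all of $L^2_W(G\times\R, dg\times dp(r))$; the inversion formula then holds in the $L^2$ sense. The only real obstacle I anticipate is bookkeeping: the paper integrates the spectral parameter over all of $\R$ whereas the natural Plancherel statement is on $\R_+$, so one must systematically invoke the Weyl symmetry to trade factors of $\tfrac12$ between the two formulations and get the normalisation of the target $L^2$ space exactly right.
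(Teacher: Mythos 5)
Your proposal is correct and follows exactly the paper's route: the paper's own (very terse) proof likewise identifies $\wcal a(ir,b,ir',b')$ as the Helgason--Fourier transform of $a(z,b,r)e^{(\frac12+ir)\langle z,b\rangle}$ in $z$ evaluated at $(b',-r')$, and then invokes the Plancherel and inversion formulae for $\fcal$. You have simply filled in the normalisation bookkeeping (the factor $\tfrac12$, the Weyl symmetry, the change $r'\mapsto -r'$) that the paper leaves implicit.
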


\begin{proof} For $r, r'\in\R$, the Wigner transform is given by
$$ \wcal a(ir, b, ir', b') = \int_{\D} a(z, b, r) e^{(\frac12+ir) \langle z, b \rangle}
e^{(\frac12+ir') \langle z, b' \rangle}\Vol(dz) ,$$
it is the Fourier transform of $a(z, b, r) e^{(\frac12+ir) \langle z, b \rangle}$ with respect to $z$, evalutated at $(b', -r')$.
The inversion formula and the isometry
  \begin{eqnarray}\label{e:isomHaarW}||a||_{L^2_W(G\times\R,  dg \times dp(r))} &=& ||\wcal a(ir, b, ir', b')||_{L^2(B \times
i\R_+ \times B \times i\R_+, db \otimes p(dr) \otimes db'\otimes
p(dr')  )} \\
 &=&\frac12 ||\wcal a(ir, b, ir', b')||_{L^2(B \times
i\R \times B \times i\R, db \otimes p(dr) \otimes db'\otimes
p(dr')  )}.
\end{eqnarray}
follow from the Plancherel and inversion formulae for $\fcal$.
\end{proof}

\subsection{ \label{PSD} Patterson-Sullivan  distributions on $\D$}

\begin{defn} \label{DPS}  For $\nu, \nu'\in i\R$, the Patterson-Sullivan distribution $PS_{(\nu,b), (-\nu', b')}\defi
PS_{e_{(\nu,b)}, e_{(-\nu', b')}}$ associated  to the  two eigenfunctions
$e_{(\nu, b)}(z)= e^{(\frac12+\nu)\langle z, b\rangle}$ and $e_{(-\nu',
b')}(z)= e^{(\frac12-\nu')\langle z, b'\rangle}$  is the
distribution on $S\D = B^{(2)} \times
\R  $ defined by
\begin{equation}\label{introdefPSERB}  PS_{e_{(\nu,b)}, e_{(-\nu', b')}}(d\tilde{b},d\tilde{b}^\prime,
d\tau) =  \frac{\delta_b(d\tilde{b})
\delta_{b^\prime}(d\tilde{b}')}{|\tilde{b}-\tilde{b}'|^{1+\nu-\overline{\nu'}}} e^{(\nu+\overline{\nu'})\tau}d\tau.
  \end{equation}
\end{defn}
\noindent We use the coordinates defined in \S \ref{AC}. We note that
$PS_{e_{(\nu,b)}, e_{(-\nu', b')}}$ is undefined if $b = b'$. We chose this somewhat awkward notation (writing $\overline{\nu'}$ instead of $-\nu'$) so that the definition can be straightforwardly extended to $\nu\in\C$ when we go to a compact quotient. For higher rank symmetric spaces, this formula was generalized by Schr\"oder \cite{SchDiss}. He pointed out the fact that $\nu'$ has to be replaced by $- w.\nu'$ (where $w$ is the {\em longest element} of the Weyl group) if one wants the ``diagonal''  ($\nu=\nu'\in i{\frak a}^*$) Patterson-Sullivan distributions to be $A$-invariant.

We now  prove an analogue of Proposition \ref{PSEINTRO} on the universal cover. Recall that $S\D$ is naturally endowed with the density $e^{\la z, b\ra}\Vol(dz)db$, corresponding to the Liouville measure on the unit tangent bundle, or to the Haar measure in the group theoretic picture $S\D=G$. In what follows we have to distinguish between {\em distributions} and {\em distribution densities} on a manifold, see \cite[Ch.\ VI]{Hor}. The choice of a preferred density allows to identify both.
On the boundary $B$ (endowed with the density $db$) we will denote $\delta_{b_o}(b)$ the distribution defined by the Dirac mass at a point $b_o$, and $\delta_{b_o}(db)=\delta_{b_o}(b)db$ the corresponding distribution density, defining the linear form $f\mapsto f(b_o)$ on $C^\infty(B)$.
We recall that distributions can be multiplied under certain assumptions on their wavefront sets \cite{Hor} Thm 8.2.10.

\begin{prop} \label{PSERB2}
On $S\D$, define the distribution $\epsilon_{\nu, b}(z, \tilde b)=e^{(-\frac12+\nu)\langle z, \tilde b\rangle}   \delta_b(\tilde b), $ corresponding to the distribution density  $\epsilon_{\nu, b}(z, \tilde b)e^{\la z, \tilde b\ra}\Vol(dz)d\tilde b=e^{(\frac12+\nu)\langle z,\tilde  b\rangle}  \delta_b(\tilde b)d\tilde b \Vol(dz) . $

We have
$$ \left( \epsilon_{\nu, b}. \iota \overline{\epsilon_{-\nu', b'}}\right)(z, \tilde b)e^{\la z, \tilde b\ra}\Vol(dz)d\tilde b= 2\pi.  2^{(\nu-\overline{\nu'})}PS_{e_{(\nu,b)}, e_{(-\nu', b')}}(dz, d\tilde b) $$
where $\iota$ denotes time-reversal. The product on the left-hand side is well-defined for $b\not= b'$.
 \end{prop}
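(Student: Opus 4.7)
The plan is to verify the identity by a direct coordinate computation, working in the parametrization $S\D = B^{(2)}\times\R$ from \S\ref{AC}, where a point is labeled $(B_1,B_2,\tau)$ with $B_1$ the backward endpoint, $B_2$ the forward endpoint, and $\tau$ the signed arclength from $z_{B_1,B_2}$. In the $\D\times B$ picture we identify $\tilde b=B_2$, and since $\iota$ preserves the footpoint of a unit tangent vector while reversing direction, in $(z,\tilde b)$-coordinates one has $\iota(z,\tilde b)=(z,\tilde b^*)$, where $\tilde b^*=B_1(z,\tilde b)$ is the opposite endpoint of the geodesic through $(z,\tilde b)$. In particular $\iota\overline{\epsilon_{-\nu',b'}}(z,\tilde b)=e^{(-\frac12-\overline{\nu'})\la z,\tilde b^*\ra}\delta_{b'}(\tilde b^*)$.

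The first thing I would check is that the product is well-defined off $\{b=b'\}$, via H\"ormander's criterion (Thm.\ 8.2.10 of \cite{Hor}). The factor $\delta_b(\tilde b)$ in $\epsilon_{\nu,b}$ has wavefront set conormal to $\{\tilde b=b\}$, i.e.\ purely in the $\tilde b$-cotangent direction. The factor $\delta_{b'}(\tilde b^*)$ in $\iota\overline{\epsilon_{-\nu',b'}}$ is the pullback of a delta function by the smooth submersion $(z,\tilde b)\mapsto\tilde b^*$, which genuinely depends on $z$ (moving $z$ transversally to the geodesic changes the opposite endpoint), so its wavefront set has nontrivial $z$-component. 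The two wavefronts are therefore transverse whenever $b\neq b'$, and no covector cancellation can occur, so the product exists as a distribution.

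Next I would multiply the exponents. In $(B_1,B_2,\tau)$-coordinates, Lemma~\ref{IDEN} (with $u=0$) combined with \eqref{BID2} gives
\[
\la z,B_2\ra=\tau-\log\tfrac{|B_1-B_2|}{2},\qquad \la z,B_1\ra=-\tau-\log\tfrac{|B_1-B_2|}{2},
\]
the second following by applying the first to the time-reversed triple. Substituting into
\[
\epsilon_{\nu,b}(z,\tilde b)\cdot\iota\overline{\epsilon_{-\nu',b'}}(z,\tilde b)=e^{(-\frac12+\nu)\la z,B_2\ra+(-\frac12-\overline{\nu'})\la z,B_1\ra}\,\delta_b(B_2)\delta_{b'}(B_1),
\]
the $\tau$-coefficient collects to $\nu+\overline{\nu'}$ and the $\log\tfrac{|B_1-B_2|}{2}$-coefficient to $1-\nu+\overline{\nu'}$, producing
\[
e^{(\nu+\overline{\nu'})\tau}\bigl(\tfrac{|B_1-B_2|}{2}\bigr)^{1-\nu+\overline{\nu'}}\delta_b(B_2)\delta_{b'}(B_1).
\]

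Finally I would multiply by the reference density $e^{\la z,\tilde b\ra}\Vol(dz)\,d\tilde b$, which by Lemma~\ref{POISBB} is the Haar measure $4\pi\,\dfrac{dB_1\,dB_2}{|B_1-B_2|^{2}}\,d\tau$. Combining the $|B_1-B_2|$ exponents gives $1-\nu+\overline{\nu'}-2=-(1+\nu-\overline{\nu'})$, exactly matching the denominator in Definition~\ref{DPS}, while the numerical constant simplifies as $4\pi\cdot 2^{-(1-\nu+\overline{\nu'})}=2\pi\cdot 2^{\nu-\overline{\nu'}}$, yielding precisely $2\pi\cdot 2^{\nu-\overline{\nu'}}\,PS_{e_{(\nu,b)},e_{(-\nu',b')}}$. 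The main delicate point will be keeping the distinction between distributions and distribution densities straight under the change of coordinates, and correctly handling the pullback of $\delta_{b'}$ under the implicit map $\tilde b\mapsto\tilde b^*$; the wavefront transversality is what makes this bookkeeping legitimate, and once it is in place the rest is routine algebra.
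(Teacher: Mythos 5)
Your proof is correct and follows essentially the same route as the paper's: express both factors in the $(b',b,\tau)$ coordinates via Lemma \ref{IDEN}, collect the exponents of $\tau$ and of $\log\frac{|b-b'|}{2}$, and convert with the Haar-measure formula of Lemma \ref{POISBB}; all your coefficients and the final constant $2\pi\cdot 2^{\nu-\overline{\nu'}}$ match. The only addition is your explicit wavefront-set verification of well-definedness, which the paper asserts without detail, so this is a welcome but inessential refinement.
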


 \begin{proof} Writing $ \epsilon_{\nu, b}$ in $(b, b', t)$ coordinates, we
 have
\begin{equation}\epsilon_{\nu, b}(\tilde b, \tilde b', t) = e^{(- \frac12+\nu)\langle g(\tilde{b}, \tilde{b}') a_t \cdot 0, \tilde{b} \rangle}
\delta_b(\tilde{b}). \end{equation} Its time reversal is thus
\begin{equation} \iota \epsilon_{\nu, b} (\tilde{b}, \tilde{b}', t) = e^{(- \frac12+\nu)\langle g(\tilde{b'}, \tilde{b}) a_{-t} \cdot 0, \tilde{b'} \rangle}
\delta_b(\tilde{b'}). \end{equation} By the identity of Lemma
\ref{IDEN} of \S \ref{AC}, we have \begin{equation} \langle
g(\tilde b',\tilde b)a_{t}, b \rangle = t - \log \frac{|\tilde b - \tilde b'|}2.
\end{equation}   Multiplying
the two distributions gives
\begin{equation}  \left( \epsilon_{\nu, b}. \iota \overline{\epsilon_{-\nu', b'}}\right)(\tilde b, \tilde b', t) = 2^{-1+\nu-\overline{\nu'}}\frac{e^{ t (\nu +\overline{\nu'}) }}{ |b -
b'|^{-1 + \nu - \overline{\nu'}}} \delta_b(\tilde{b}) \delta_{b'}(\tilde{b'}).
\end{equation}
Multiplying by $e^{\la z, b\ra}\Vol(dz)db=4\pi\frac{db\otimes db'}{|b-b'|^2}dt,$ we find
$$\left( \epsilon_{\nu, b}. \iota \overline{\epsilon_{-\nu', b'}}\right)(\tilde b, \tilde b', t)  4\pi\frac{db\otimes db'}{|b-b'|^2}dt= 2\pi. 2^{\nu-\overline{\nu'}} \frac{e^{ t  (\nu + \overline{\nu'})}}{ |b -
b'|^{1 + \nu-\overline{\nu'} }} \delta_b(\tilde{b}) \delta_{b'}(\tilde{b'})db\,db'\, dt.$$

 \end{proof}

 \begin{rem} \label{r:transport}As in \cite{AZ}, we observe that Patterson-Sullivan distributions are
eigendistributions of the geodesic flow. For a distribution density on $S\D$ we define $g^t_\sharp$ as the pushforward by $g^t$. For $g=(z, \tilde b)\in G$, we have $\epsilon_{\nu, b}(ga_t)=e^{(-\frac12+\nu)t}\epsilon_{ir, b}(g).$ This implies that
 $$g^t_\sharp(\epsilon_{\nu, b}(z, \tilde b)e^{\la z, \tilde b\ra}\Vol(dz)d\tilde b)=e^{(\frac12-\nu)t}
 \epsilon_{\nu, b}(z, \tilde b)e^{\la z, \tilde b\ra}\Vol(dz)d\tilde b.$$
 \end{rem}

 \begin{prop} For $\nu, \nu'\in i\R$, we have:
$$g^t_\sharp PS_{e_{(\nu,b)}, e_{(-\nu', b')}} = e^{-t(\nu + \overline{\nu'})} PS_{e_{(\nu,b)}, e_{(-\nu',
b')}}.$$
\end{prop}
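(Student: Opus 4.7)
The cleanest route is a direct computation from Definition \ref{DPS}. By definition of pushforward of a distribution density, $(g^t_\sharp PS_{e_{(\nu,b)},e_{(-\nu',b')}})(a) = PS_{e_{(\nu,b)},e_{(-\nu',b')}}(a \circ g^t)$ for every test function $a$ on $S\D$. In the $(b',b,\tau)$-coordinates of \S \ref{AC} the geodesic flow acts by the formula $g^t(\tilde b',\tilde b,\tau) = (\tilde b',\tilde b,\tau+t)$, and the factors $\delta_b(d\tilde b)\delta_{b'}(d\tilde b')|\tilde b - \tilde b'|^{-(1+\nu-\overline{\nu'})}$ in the $PS$-density depend only on the boundary variables, not on $\tau$. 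Hence only the $\tau$-factor $e^{(\nu+\overline{\nu'})\tau}\,d\tau$ enters the change of variable. Setting $\sigma = \tau + t$ produces the scalar $e^{-t(\nu+\overline{\nu'})}$ in front, and I read off
\[
g^t_\sharp PS_{e_{(\nu,b)},e_{(-\nu',b')}} = e^{-t(\nu+\overline{\nu'})} PS_{e_{(\nu,b)},e_{(-\nu',b')}}.
\]

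As an alternative consistency check, I could derive the same identity from Proposition \ref{PSERB2} together with Remark \ref{r:transport}. From the transport formula $\epsilon_{\nu,b}(g a_t) = e^{(-1/2+\nu)t}\epsilon_{\nu,b}(g)$ and the analogous computation for $\iota\overline{\epsilon_{-\nu',b'}}$ (using $a_t w = w a_{-t}$, so $\iota\overline{\epsilon_{-\nu',b'}}(ga_t) = e^{(1/2+\overline{\nu'})t}\iota\overline{\epsilon_{-\nu',b'}}(g)$), one gets $(\epsilon_{\nu,b}\cdot \iota\overline{\epsilon_{-\nu',b'}})(ga_t) = e^{(\nu+\overline{\nu'})t}(\epsilon_{\nu,b}\cdot \iota\overline{\epsilon_{-\nu',b'}})(g)$. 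Since the Haar measure $e^{\langle z,\tilde b\rangle}\Vol(dz)d\tilde b$ is $g^t$-invariant, the pushforward of the product density picks up exactly the factor $e^{-t(\nu+\overline{\nu'})}$.

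There is essentially no obstacle: this is a bookkeeping statement that codifies the elementary fact that $PS$ is built out of an explicit $\tau$-exponential on top of $\tau$-independent boundary data. The only care needed is to use the sign convention $(\Phi_\sharp T)(a) = T(a\circ\Phi)$ consistently (as declared in the Notational issue paragraph), so that the shift $\tau \mapsto \tau + t$ under $g^t$ yields $-t$ in the exponent after substitution. The formula matches the eigenvalue $e^{it(r_k - r_j)}$ advertised in \eqref{EIGFORM} once one sets $\nu = \nu_j = ir_j$, $\nu' = \nu_k = ir_k$ (so $\overline{\nu'} = -ir_k$ for real $r_k$).
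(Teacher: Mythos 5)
Your proof is correct and is exactly the computation the paper has in mind when it declares the proof ``immediate'': the substitution $\tau\mapsto\tau+t$ in the $e^{(\nu+\overline{\nu'})\tau}\,d\tau$ factor of Definition \ref{DPS}, with the boundary factors untouched and the sign fixed by the convention $(\Phi_\sharp T)(a)=T(a\circ\Phi)$. The alternative check via Proposition \ref{PSERB2} and Remark \ref{r:transport} is also sound, but nothing more needs to be said.
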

The proof is immediate. If we extend $PS_{e_{(\nu,b)}, e_{(-\nu', b')}}$ to $S\D\times \R$ by taking
$PS_{e_{(\nu,b)}, e_{(-\nu', b')}}\otimes \delta_{\frac{\nu-\overline{\nu'}}{2i}}$, and if we extend the geodesic flow  to $S\D\times \R$ by letting
$$G^t(z, b, r)=(g^{rt}(z, b), r),$$ we have
$$G^t_\sharp\left( PS_{e_{(\nu,b)}, e_{(-\nu', b')}} \otimes \delta_{\frac{\nu-\overline{\nu'}}{2i}}\right)= e^{i \frac{(\nu^2- \overline{\nu'}^{2})t}2} PS_{e_{(\nu,b)}, e_{(-\nu', b')}} \otimes \delta_{\frac{\nu-\overline{\nu'}}{2i}}.$$
In other words, for $\nu=ir, \nu'=ir'$,
$$G^t_\sharp\left( PS_{e_{(ir,b)}, e_{(-ir', b')}} \otimes \delta_{\frac{r+r'}{2}}\right)= e^{-i \frac{(r^2-r^{'2})t}2} PS_{e_{(ir,b)}, e_{(-ir',
b')}}\otimes \delta_{\frac{r+r'}{2}}.$$

When working on compact quotients we will have to worry about a
possible extension of these formulae to the case of complex $r$,
and this is why we pay attention to write formulae that can be
adapted in a straightforward manner to $r\in\C$.

\subsection{Radon-Fourier transform along geodesics} $PS$-distributions are closely connected to the Radon transform along
geodesics.
 As reviewed in
\S \ref{AC}, the unit tangent bundle $S\D$ can be
identified with $B^{(2)}\times\R$: the set
$B^{(2)}$ represents the set of oriented
geodesics, and $\R$ gives the time parameter along geodesics. We
denote by $\gamma_{b', b}$ the oriented geodesic with endpoints
$b', b$.

\begin{defn} \label{RT} The geodesic  Fourier-Radon transform is defined by
$$\rcal: C_c(S \D) \to C_c( B^{(2)} \times \R), \;\;\;
\mbox{by}\;\; \rcal f(b', b, r) = \int_{\R} f(g(b', b)
a_{t}) e^{- i r t} dt.$$
\end{defn}
It is clear that $\rcal$ intertwines composition with
$g^t$ and multiplication by $e^{i r t}$, i.e.
\begin{equation} \label{RADINT}  \rcal ( f\circ g^t) (b', b, r) = e^{i r t}
\rcal f(b', b, r). \end{equation}
By the Fourier inversion formula, \begin{equation}
\label{RADINV} f(g(b', b) a_t) = \frac1{2\pi}\int_{\R} \rcal f(b', b, r)
e^{i r t} dr. \end{equation}

We call ``Patterson-Sullivan transform'' the pairing of the
family of $PS$-distributions with a test function.
\begin{defn} \label{PSTRDEF} The $PS$-transform is
defined as follows:

  $PS: C_c^{\infty}(G \times \R) \to
C^{\infty}(B^{(2)}\times i\R\times i\R)$
on $G$ by $$ PS  a(\nu, b, \nu', b') \defi PS_{(\nu, b), (-\nu',
b')} \left(a_{\frac{\nu-\overline{\nu'}}{2i}}\right) = \frac{1}{ |b - b'|^{1 + \nu-\overline{\nu'} }} \int_{\R} a\left(g(b',
b) a_{\tau}, \frac{\nu-\overline{\nu'}}{2i}\right) e^{(\nu+\overline{\nu'}) \tau} d \tau   $$
 \end{defn}

The $PS$-transform is  related to the Fourier-Radon transform as
follows:
\begin{equation} \label{PSRADON} PS a(\nu, b, \nu', b')
 =  \frac{1}{ |b - b'|^{1 + \nu-\overline{\nu'}} } \rcal a_{\frac{\nu-\overline{\nu'}}{2i}}(b', b,
i(\nu+\overline{\nu'})).
 \end{equation}

 Using the inversion formula for $\rcal$, one gets the inversion formula for the $PS$-transform~:

\begin{lem} \label{PSINVGa} The function $a$ is determined from
its $PS$-transform $PS  a(ir, b, ir', b')$ by
$$a(b', b, t, R) = \frac1\pi e^{2 iR t}|b - b'|^{1 +2iR}  \int_{\R}  PS  a(ir, b, i(2R-r), b') e^{-2i r t}
dr.
$$
\end{lem}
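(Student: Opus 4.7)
The proof is a direct manipulation based on the Fourier inversion formula \eqref{RADINV} for the geodesic Radon transform and the relation \eqref{PSRADON} expressing $PS$ in terms of $\rcal$. The plan is to choose the parameters $(\nu,\nu')$ in the $PS$-transform so that the ``spectral'' slot $\frac{\nu-\overline{\nu'}}{2i}$ is fixed to $R$, and the transform variable $i(\nu+\overline{\nu'})$ sweeps out the real line as $r$ varies; then invert.

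Concretely, I would first substitute $\nu=ir$ and $\nu'=i(2R-r)$ into \eqref{PSRADON}. A short computation gives $\frac{\nu-\overline{\nu'}}{2i}=R$ and $i(\nu+\overline{\nu'})=-2(r-R)$, so that
\begin{equation*}
PS\,a(ir,b,i(2R-r),b')\;=\;\frac{1}{|b-b'|^{1+2iR}}\,\rcal a_R\bigl(b',b,\,2R-2r\bigr),
\end{equation*}
or equivalently
\begin{equation*}
|b-b'|^{1+2iR}\,PS\,a(ir,b,i(2R-r),b')\;=\;\rcal a_R(b',b,\,2R-2r).
\end{equation*}

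Next I would apply the Fourier inversion formula \eqref{RADINV} to the function $a_R$, which gives
\begin{equation*}
a(b',b,t,R)\;=\;a_R(g(b',b)a_t)\;=\;\frac{1}{2\pi}\int_{\R}\rcal a_R(b',b,s)\,e^{ist}\,ds .
\end{equation*}
Performing the change of variables $s=2R-2r$ (so $ds=-2\,dr$, with the orientation of the integration interval reversed) yields
\begin{equation*}
a(b',b,t,R)\;=\;\frac{1}{\pi}\int_{\R}\rcal a_R(b',b,\,2R-2r)\,e^{i(2R-2r)t}\,dr
\;=\;\frac{1}{\pi}\,e^{2iRt}\int_{\R}\rcal a_R(b',b,\,2R-2r)\,e^{-2irt}\,dr .
\end{equation*}
Substituting the expression for $\rcal a_R(b',b,2R-2r)$ obtained in the first step then produces the claimed identity
\begin{equation*}
a(b',b,t,R)\;=\;\frac{1}{\pi}\,e^{2iRt}\,|b-b'|^{1+2iR}\int_{\R}PS\,a(ir,b,i(2R-r),b')\,e^{-2irt}\,dr .
\end{equation*}

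There is no genuine obstacle here: the statement is essentially a repackaging of one-dimensional Fourier inversion along each geodesic $\gamma_{b',b}$, once one has unwound the affine reparametrization $r\mapsto 2R-2r$ that relates the $PS$-transform variable to the Radon variable. The only points requiring mild care are the bookkeeping of the factor $|b-b'|^{1+2iR}$ (which is a pure boundary weight and commutes with the $r$-integration) and the Jacobian from the change of variables, which accounts for the factor $\tfrac{1}{\pi}$ rather than $\tfrac{1}{2\pi}$. Thus the lemma will follow directly, under the standing assumption that $a$ has sufficient Schwartz-type decay in $r$ and $\tau$ to justify all the Fourier manipulations.
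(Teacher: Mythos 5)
Your proof is correct and is exactly the argument the paper intends (the paper simply states that the lemma follows from the inversion formula for $\rcal$ without writing out the details): the parameter choice $\nu=ir$, $\nu'=i(2R-r)$ fixes $\frac{\nu-\overline{\nu'}}{2i}=R$ and gives $i(\nu+\overline{\nu'})=2R-2r$, after which one-dimensional Fourier inversion along the geodesic plus the change of variables $s=2R-2r$ (contributing the factor $2$ that turns $\frac{1}{2\pi}$ into $\frac{1}{\pi}$) yields the stated formula.
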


Using the Parseval identity for the 1-dimensional Fourier transform, as well as Lemma \ref{POISBB} expressing the Haar measure in the $(b', b, t)$-coordinates, one gets

\begin{lem}\label{HaarPS}
 $$\norm{a}^2_{L^2(G \times \R , dg \otimes dp)}=\frac1{\pi}\int_{b,b'\in B, r, r'\in\R}  |PS a(ir, b, ir', b')|^2 db db' \left(\frac{r+r'}2\right)  \tanh\left( \pi \frac{r+r'}2\right) dr dr'.$$
 \end{lem}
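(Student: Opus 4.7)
The plan is to compute $\norm{a}^2_{L^2(G\times\R,\, dg\otimes dp)}$ by three elementary reductions: pass to $(b',b,t)$ coordinates via Lemma \ref{POISBB}, apply the one-dimensional Plancherel theorem in $t$ to bring in the Fourier-Radon transform $\rcal$, and finally use \eqref{PSRADON} together with a linear change of variables to convert the result into the $PS$-norm.

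First, Lemma \ref{POISBB} gives $dg = 4\pi\,|b-b'|^{-2}\,db\,db'\,dt$, so
$$\norm{a}^2_{L^2(G\times\R)} \;=\; 4\pi \int_{\R}\int_{B\times B}\int_{\R} |a(g(b',b)a_t,R)|^2 \,\frac{db\,db'}{|b-b'|^2}\,dt\,dp(R).$$
Second, fix $(R,b,b')$. Definition \ref{RT} identifies $\rcal a_R(b',b,\rho)$ as the one-dimensional Fourier transform in $t$ of the function $t\mapsto a(g(b',b)a_t,R)$, so the usual Parseval identity yields $\int_{\R}|a(g(b',b)a_t,R)|^2\,dt = (2\pi)^{-1}\int_{\R}|\rcal a_R(b',b,\rho)|^2\,d\rho$. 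This turns the prefactor $4\pi$ into $2$.

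Third, I invoke \eqref{PSRADON}. With $\nu=ir$, $\nu'=ir'$, the parameters appearing in that formula become $R=(r+r')/2$ and $\rho=r'-r$; the exponent $1+i(r+r')$ in $|b-b'|^{-(1+i(r+r'))}$ has real part $1$, so that
$$|PS\,a(ir,b,ir',b')|^2 \;=\; |b-b'|^{-2}\,|\rcal a_R(b',b,\rho)|^2,$$
and the $|b-b'|^{-2}$ factor inherited from Haar measure cancels exactly. The substitution $(R,\rho)\mapsto(r,r')$ is linear with Jacobian one, and $dp(R)\,d\rho = (2\pi)^{-1}\,\tfrac{r+r'}{2}\tanh\!\bigl(\pi\tfrac{r+r'}{2}\bigr)\,dr\,dr'$. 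Collecting the overall constant $4\pi\cdot(2\pi)^{-1}\cdot(2\pi)^{-1}=\pi^{-1}$ produces the formula in the lemma.

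No analytic difficulty is expected, since for the regularity classes considered the 1D Plancherel theorem applies on each geodesic. The only real obstacle is bookkeeping: correctly tracking the normalization factors ($4\pi$ in Haar measure, $(2\pi)^{-1}$ in Plancherel on $\R$, $(2\pi)^{-1}$ in $dp$) and verifying that the pair of rescalings $\{R=(r+r')/2,\rho=r'-r\}$ has unit Jacobian so that the spectral parameter $r'-r$ of the Fourier-Radon transform matches the transverse parameter in $dp$.
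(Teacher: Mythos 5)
Your proof is correct and follows exactly the route the paper indicates (its proof is just the one-line remark "use Parseval in $t$ together with Lemma \ref{POISBB}"); your constant bookkeeping ($4\pi\cdot(2\pi)^{-1}\cdot(2\pi)^{-1}=\pi^{-1}$), the identification $\rho=r'-r$, $R=\frac{r+r'}{2}$ with unit Jacobian, and the cancellation of $|b-b'|^{-2}$ against the modulus-one factor $|b-b'|^{-i(r+r')}$ all check out.
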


This formula can be compared to that obtained for the Wigner
transform \eqref{e:isomHaarW}. Note, however, that the $dr
dr'$-density is different in the two formulae. We also stress the
fact that we do not ask $a$ to have the symmetry \eqref{e:Wsym}
here.

\subsection{\label{LERINFTY}Operator sending the Patterson-Sullivan distributions to the Wigner distributions}
If $a$ is a function on $S\D\simeq G$, and $\nu\in\C$, we define the function $L_{\nu} a$ on $G$ by
$$L_{\nu} a(g)=\int_\R a(g n_u) (1+u^2)^{-(\frac12 +\nu)} du.$$

In this section, we prove the following~:
\begin{prop}\label{LPSW} Let $a\in C_c^\infty(G)$, $\nu, \nu'\in i\R$ and $(b', b)\in B^{(2)}$. Then $L_{-\overline{\nu'}}(a)\in C^\infty(S\D)$. Although $L_{-\overline{\nu'}}(a)$ is not compactly supported, the pairing $ PS_{(\nu, b)(-\nu', b')}\left(L_{-\overline{\nu'}}(a)\right)$ is well defined, and we have
$$  PS_{(\nu, b)(-\nu', b')}\left(L_{-\overline{\nu'}}(a)\right)= 2^{-(1+\nu-\overline{\nu'})}   W_{(\nu, b)(-\nu', b')}(a).$$
\end{prop}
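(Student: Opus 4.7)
The plan is to unwind both sides as iterated integrals over $\R^2$ and match them via an explicit change of variables. Unfolding the Patterson--Sullivan pairing (Definition \ref{DPS}) together with the definition of $L_{-\overline{\nu'}}$ gives
\[
PS_{(\nu,b),(-\nu',b')}(L_{-\overline{\nu'}}(a)) = \frac{1}{|b-b'|^{1+\nu-\overline{\nu'}}} \iint_{\R^2} a\bigl(g(b',b)\,a_\tau n_u\bigr)\,(1+u^2)^{-(1/2-\overline{\nu'})}\, e^{(\nu+\overline{\nu'})\tau}\, du\, d\tau.
\]
Before proceeding I should verify that this expression is well-defined even though $L_{-\overline{\nu'}}(a)$ is not compactly supported: since $a \in C_c^\infty(G)$ and the map $(\tau,u)\mapsto g(b',b)\,a_\tau n_u$ is a proper diffeomorphism of $\R^2$ onto the fibre $\{g\in G : g\cdot 1 = b\}$ (both $a_\tau$ and $n_u$ fix $1\in B$), the integrand is compactly supported in $(\tau,u)$ and the iterated integral converges absolutely. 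On the Wigner side, Proposition \ref{WIGNER} (applied to the function $a$, independent of $r$) gives
\[
W_{(\nu,b),(-\nu',b')}(a) = \int_\D a(z, b)\, e^{(1/2+\nu)\langle z,b\rangle + (1/2-\overline{\nu'})\langle z, b'\rangle}\, \Vol(dz).
\]

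The core step is the change of variables $z = g(b',b)\,a_\tau n_u \cdot o$ in this Wigner integral. Under the identification $G \simeq \D\times B$ via $g \leftrightarrow (g\cdot o,\, g\cdot 1)$, this map is a diffeomorphism $\R^2 \to \D$ carrying $a(g(b',b)\,a_\tau n_u)$ to $a(z,b)$, since the forward endpoint of $g(b',b)\,a_\tau n_u$ is $b$. A direct computation in the upper half-plane model---where for $b=1,\,b'=-1$ one can take $g(b',b) = \mathrm{id}$ and the map reduces to $(\tau,u)\mapsto e^\tau(u+i)$---shows $\Vol(dz) = du\, d\tau$ with no extra Jacobian constant; the general case follows by $G$-invariance of $\Vol$. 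I then evaluate the two Busemann exponents using the cocycle identity \eqref{HELGID} together with $\langle n_u \cdot o, 1\rangle = 0$ (since $n_u\cdot o$ lies on the horocycle through $o$ based at $1$) and $\langle g(b',b)\cdot o, b\rangle = \langle g(b',b)\cdot o, b'\rangle = -\log(|b-b'|/2)$ from \eqref{CLAIM}:
\[
\langle z, b\rangle = \tau - \log\tfrac{|b-b'|}{2},\qquad \langle z, b'\rangle = -\tau - \log(1+u^2) - \log\tfrac{|b-b'|}{2},
\]
the second formula using $\langle a_\tau n_u \cdot o, -1\rangle = -\tau - \log(1+u^2)$, which is immediate in UHP from $\langle w, 0\rangle = \log(\Im w/|w|^2)$ (or derivable from the Iwasawa formulas in \S \ref{s:calc}).

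Substituting these into the Wigner integrand collects the exponent into
\[
e^{(\nu+\overline{\nu'})\tau}\,(1+u^2)^{-(1/2-\overline{\nu'})}\,(|b-b'|/2)^{-(1+\nu-\overline{\nu'})},
\]
so the Wigner integral equals $(|b-b'|/2)^{-(1+\nu-\overline{\nu'})}$ times the very same double integral appearing on the PS side. Comparing the two prefactors yields $W_{(\nu,b),(-\nu',b')}(a) = 2^{1+\nu-\overline{\nu'}}\cdot PS_{(\nu,b),(-\nu',b')}(L_{-\overline{\nu'}}(a))$, which rearranges to the stated identity. The only real obstacle is bookkeeping---pinning down signs in the Busemann formulas and verifying the absence of an extra constant in the Jacobian---once those are in place, the identification of the two sides is purely algebraic.
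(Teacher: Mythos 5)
Your argument is correct, but it follows a genuinely different route from the paper's. You unwind both pairings into explicit iterated integrals and match them through the change of variables $z=g(b',b)a_\tau n_u\cdot o$, checking that $\Vol(dz)=du\,d\tau$ in the paper's curvature $-1$ normalization and evaluating the two Busemann exponents from Lemma \ref{IDEN}, the cocycle identity \eqref{HELGID} and \eqref{CLAIM}; the constant $2^{-(1+\nu-\overline{\nu'})}$ then falls out of comparing $(|b-b'|/2)^{-(1+\nu-\overline{\nu'})}$ with the factor $|b-b'|^{-(1+\nu-\overline{\nu'})}$ in Definition \ref{DPS}. All of these steps check out (in particular $\langle a_\tau n_u\cdot o,-1\rangle=-\tau-\log(1+u^2)$ is consistent with the formulas of \S\ref{s:calc} via $a_\tau n_u=n_{ue^\tau}a_\tau$), and your observation that the integrand is compactly supported in $(\tau,u)$ because $AN$ is closed in $G$ settles the well-definedness of the pairing. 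This is essentially the ``direct computation'' that the paper attributes to \cite{AZ} in the diagonal case and deliberately replaces here: the paper instead factors the identity through Proposition \ref{PSERB2}, writing $PS_{e_{(\nu,b)},e_{(-\nu',b')}}$ as the product $\epsilon_{\nu,b}\cdot\iota\,\overline{\epsilon_{-\nu',b'}}$ times the Haar density, and then applies $L_{-\overline{\nu'}}$ using only the $N$-invariance of $\epsilon_{\nu,b}$ and the $A$-eigendistribution property of $\iota\epsilon_{-\nu',b'}$ (Proposition \ref{Lactions}), so that the whole computation reduces to the single identity $L_{\nu}(\iota\epsilon_{\nu,b})=\pi e_{(\nu,b)}$. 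The structural route isolates exactly which invariance properties make the identity work and is the form that generalizes (e.g.\ to higher-rank symmetric spaces); yours is more elementary and self-contained, and makes the Jacobian, the constant, and the convergence completely explicit.
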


A proof of this property by direct computation was given in \cite{AZ}, in the ``diagonal'' case $\nu=\nu'\in i\R$. The proof given there could be transposed to the general ``off-diagonal'' case. Here we give an alternative presentation based on Proposition \ref{PSERB2} and on the invariance properties of
the distributions $\epsilon_{\nu, b}$.

To check the formula,
 we use that $a_t n_u = n_{ue^{ t}} a_t$ and  the
$KAN$ (Iwasawa) decomposition,
\begin{equation}  n_u = k_u a_{- \log (1 + u^2)}
\bar n_{f(u)}, \end{equation}
as in \eqref{KANa}.

 We now consider the action of $L_{-\overline{\nu'}}$ on the product
 $ \left( \epsilon_{\nu, b}. \iota \overline{\epsilon_{-\nu', b'}}\right)$ (we note that the Poisson density $e^{\la z, \tilde b\ra}\Vol(dz)d\tilde b$ is $n_u$-invariant, so can be taken out of the integral defining  $L_{-\overline{\nu'}}$).

\begin{prop}  \label{Lactions}\begin{enumerate}

\item   $\epsilon_{\nu, b}$ is $n_u$-invariant;

\item $  L_{\nu} (\iota\epsilon_{\nu, b})(z, \tilde b)  = \pi e_{(\nu, b)}(z) $ (it does not depend on $\tilde b$, in other words, it is a right-$K$-invariant function);

\item we have $L_{-\overline{\nu'}} \left( \epsilon_{\nu, b}. \iota \overline{\epsilon_{-\nu', b'}}\right) =  \pi \epsilon_{\nu, b} \overline{e_{(-\nu', b')}} $.

\end{enumerate}

\end{prop}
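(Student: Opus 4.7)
My plan addresses the three parts in order: (1) is a direct invariance check, (2) is the computational core, and (3) follows from (1) and (2) by a short manipulation.

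For part (1), I would verify that $\epsilon_{\nu,b}(gn_u)=\epsilon_{\nu,b}(g)$ directly. Since $N$ stabilizes $1\in B$, we have $(gn_u)\cdot 1=g\cdot 1$, which handles the $\delta_b$-factor. For the exponential factor, note that $\{n_u\cdot o\}$ is the horocycle through $o$ based at $1$, so $\langle n_u\cdot o,1\rangle=0$; then \eqref{HELGID} yields $\langle gn_u\cdot o,\,gn_u\cdot 1\rangle=\langle n_u\cdot o,1\rangle+\langle g\cdot o,g\cdot 1\rangle=\langle g\cdot o,g\cdot 1\rangle$.

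For part (2), the plan is to compute the integral by changing variables to the boundary. Write $(\iota\epsilon_{\nu,b})(gn_u)=\epsilon_{\nu,b}(gn_uw)$. Since $w\in K$ fixes $o$, one has $gn_uw\cdot o=gn_u\cdot o$, while $gn_uw\cdot 1=g\cdot(n_u\cdot(-1))$. The Iwasawa formula \eqref{KANa} identifies $n_u\cdot(-1)=k_u\cdot(-1)=:b'(u)$, and the end of Section~\ref{s:calc} records $|b'(u)-1|^{2}=4/(1+u^{2})$ and $db'(u)=\frac{1}{\pi}\frac{du}{1+u^{2}}$. The basic identity \eqref{BID} applied to $\beta=1,\beta'=-1,g=n_u$, together with $\langle n_u\cdot o,1\rangle=0$ from (1), gives $\langle n_u\cdot o,b'(u)\rangle=\log(1+u^{2})$, so \eqref{HELGID} yields
\[
\epsilon_{\nu,b}(gn_uw)=(1+u^{2})^{-\frac12+\nu}\,e^{(-\frac12+\nu)\langle g\cdot o,\,g\cdot b'(u)\rangle}\,\delta_{b}(g\cdot b'(u)).
\]
Multiplying by the weight $(1+u^{2})^{-(\frac12+\nu)}$ collapses the $u$-dependence to $(1+u^{2})^{-1}du$, which by \eqref{e:derivative} equals $\pi\,e^{\langle g\cdot o,\,g\cdot b'(u)\rangle}\,d(g\cdot b'(u))$. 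Substituting $\beta=g\cdot b'(u)\in B\setminus\{g\cdot 1\}$, the integral becomes $\pi\int_{B}e^{(\frac12+\nu)\langle g\cdot o,\beta\rangle}\delta_{b}(\beta)\,d\beta=\pi\,e^{(\frac12+\nu)\langle z,b\rangle}=\pi\,e_{(\nu,b)}(z)$, which depends only on $z=g\cdot o$, so the right-$K$-invariance is automatic.

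For part (3), I would use (1) to factor the $N$-invariant $\epsilon_{\nu,b}(g)$ out of the integral defining $L_{-\overline{\nu'}}$, leaving $\epsilon_{\nu,b}(g)\cdot L_{-\overline{\nu'}}(\iota\overline{\epsilon_{-\nu',b'}})(g)$. Since the Busemann function is real-valued, $\overline{\epsilon_{-\nu',b'}}=\epsilon_{-\overline{\nu'},b'}$, and part (2) applied with parameters $-\overline{\nu'}$ and $b'$ gives $L_{-\overline{\nu'}}(\iota\epsilon_{-\overline{\nu'},b'})(z,\tilde b)=\pi\,e_{(-\overline{\nu'},b')}(z)=\pi\,\overline{e_{(-\nu',b')}}(z)$. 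The only point that requires care is the distributional product in (3): Proposition~\ref{PSERB2} guarantees this product is well-defined on $\{b\neq b'\}$, and the $N$-invariance from (1) permits $\epsilon_{\nu,b}$ to commute with the smooth integration in $u$ defining $L_{-\overline{\nu'}}$. Beyond this distributional bookkeeping there is no serious obstacle.
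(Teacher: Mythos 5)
Your proof is correct and follows essentially the same route as the paper's: part (3) is reduced to part (2) via the $N$-invariance of part (1), and part (2) is the same Iwasawa-decomposition computation --- the factor $(1+u^2)^{-\frac12+\nu}$ you extract via \eqref{BID} is exactly the $A$-eigenvalue the paper pulls out of $n_u = k_u a_{-\log(1+u^2)}\bar n_{f(u)}$, and both arguments conclude by recognizing the Poisson integral representation of $e_{(\nu,b)}$. The only cosmetic difference is that you perform the final change of variables on the boundary $B$ rather than on $K$.
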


\begin{proof}
(1) is obvious, and (3) follows from (2), so we just have to prove (2).

\begin{equation} \begin{array}{lll}  L_{\nu} (\iota\epsilon_{\nu, b})(g) & = &
  \int_{\R} (1 +
u^2)^{-(\frac{1}{2} +\nu)}    \iota\epsilon_{r, b}(g  n_u)
 du. \end{array}\end{equation}

We rewrite $n_u$ using  (\ref{KAN}). Then we:
 \begin{itemize}

 \item  remove the right $\overline{ N}$ factor since $  \iota\epsilon_{\nu, b} $
 is right-$\overline{ N}$-invariant;

 \item  replace the  $A$ factor inside
 $   \iota\epsilon_{\nu, b} $ by a factor $ e^{(-\frac12 +\nu)\log(1+u^2)}$ outside, since $\iota\epsilon_{\nu, b} $ is an
$A$-eigendistribution of eigenvalue $ = \frac{1}{2} - \nu$
which is evaluated for  $a_{- \log (1 + u^2)}$.

\item then change variables to $K$ with $\theta = \arctan u$.

\end{itemize}

We then have,
\begin{equation}
\begin{array}{lll}  L_{\nu} (\iota\epsilon_{\nu, b})(g) & = &
  \int_{\R} (1 +
u^2)^{-1 }   (\iota\epsilon_{\nu, b})(g) (g  k_u) du
  =\pi \int_K (\iota\epsilon_{\nu, b})(gk) dk \\
  &=& \pi \int_K \epsilon_{\nu, b}(gk) dk= \pi \int \epsilon_{\nu, b}(z, b') e^{\la z, b'\ra} db'
  =\pi  e^{(\frac12+\nu)\la z, b\ra} \end{array}\end{equation}
for $g=(z, \tilde b)$.
\end{proof}
Multiplying by the Poisson density, we finally get
$$L_{-\overline{\nu'}} \left( \epsilon_{\nu, b}. \iota \overline{\epsilon_{-\nu', b'}}\right)
e^{\la z, \tilde b\ra}\Vol(dz)d\tilde b
=  \pi  e_{(\nu, b)(z)} \overline{e_{(-\nu', b')}}(z) \Vol(dz)\delta_b(\tilde b)d\tilde b.$$
We recognize from Proposition \ref{WIGNER} the expression of the Wigner distribution, for a
test function $a$ that does not depend on the $r$-parameter. Comparing with Proposition \ref{PSERB2}, we obtain
Proposition \ref{LPSW}.

Proposition  \ref{LPSW} was proven for a function $a$ defined on $S\D$, in other words a function
on $S\D\times \R$ that does not depend on the last variable. In the sequel, we will apply
Proposition \ref{LPSW} to an arbitrary function on $S\D\times \R$, using it in the following form.
If $a$ is a function on $S\D\times \R \simeq G\times \R$, and $r\in\R$, we define the function $a_r$ on $S\D\simeq G$ by $a_r(g)=a(g, r)$. Proposition  \ref{LPSW} implies that
$$2^{-(1+\nu-\overline{\nu'})}   W_{(\nu, b)(-\nu', b')}(a)=  PS_{(\nu, b)(-\nu', b')}\left(L_{-\overline{\nu'}}(a_r)\right) ,$$
for $\nu=ir\in i\R$, and $\nu'\in i\R$.

\subsection{The operator $\lcal$}

Recall that we have extended the PS-distribution $PS_{(ir, b),(-ir', b')}$ ($r, r'\in\R$), originally defined
on $S\D$, to $S\D\times \R$, by tensoring it by $\delta_{\frac{r+r'}{2}}$ on the
$ \R$-variable.

We now look for an operator $\lcal$ that acts on functions (distributions) defined on $S\D\times \R$, with the property that $PS {\lcal a}(ir, b, ir', b')= \wcal a(ir, b, ir', b')$ ($r, r'\in\R$).
This means that we must have $PS {\lcal a}(ir, b, ir', b')=2^{1+ir+{ir'}} PS_{(ir, b)(-ir', b')}\left(L_{{ir'}}(a_r)\right)$.

By the PS-inversion formula (Lemma \ref{PSINVGa}), we have for all $(b', b)\in B^{(2)}$, $t\in\R$,
$R\in\R$,
\begin{multline}
{\lcal a}(b', b, t, R) = \frac{2^{1+2iR}}\pi e^{2 iR t}|b - b'|^{1 +2iR}  \int_{\R}  PS  (\lcal a)(ir, b, i(2R-r), b') e^{-2i r t}
dr\\
=   \frac{2^{1+2iR}}\pi    \int_{\R}  (1+u^2)^{-(\frac12+iR)}a_r\circ h^u(b', b, \tau)
e^{2i(R-r)(t-\tau-\frac{\log(1+u^2)}2)}
dr du d\tau\end{multline}

In other words, letting $g=(b', b, t)$,
\begin{equation}\label{e:deflcal}\lcal a(g, R)=  \frac{2^{1+2iR}}\pi\int  (1+u^2)^{-(\frac12+iR)}a (ga_{\tau-\frac{\log(1+u^2)}2} n_u, r)
e^{2i(r-R)\tau}
dr du d\tau.
\end{equation}

\subsection{Intertwining\label{s:statement}}
In this section, we prove that the  operator $\lcal$ intertwines
$V^t$ and $G^t$ on $\D$. We recall the Hilbert space $L^2_W$ in
Proposition \ref{WIGNERINV}.

\begin{maintheo} \label{t:statement}The intertwining operator  $\lcal$ is an isometry from
 $L_W^2(G \times \R, dg \times dp(r))$ to the space
 $\hcal_{PS}(\D)$  of functions such that
$$\frac14 \int  |PS a(ir, b, ir', b')|^2 db db' p(dr) p(dr')<+\infty.$$
 and
we have
\begin{equation}\label{p:inter}\lcal \circ V^t=G^{t}\circ\lcal,\end{equation}
where both sides are bounded operators from $L_W^2(G \times \R, dg
\times dp(r))$ to $\hcal_{PS}(\D)$.

\end{maintheo}

\begin{proof}

First, we consider the action of both sides on $a\in \scal_{\infty}^{\, \infty}$. We know that the space $\scal_{\infty}^{\, \infty} $
 is preserved by $V^t$. We shall first check that $\lcal a$ is a continuous function when $a\in \scal_{\infty}^{\, \infty}$, and that \eqref{p:inter} then holds as a pointwise equality between $\lcal (V^t a )$ and $G^{t}(\lcal a)$.

 For $g=(z, b)$, we see that
 \begin{equation} \label{e:pointwise}
  \begin{array}{lll} \lcal a(z,b, R)&=&  \frac{2^{1+2iR}}\pi\int  (1+u^2)^{-(\frac12+iR)}a\circ h^u\circ g^{\tau-\frac{\log(1+u^2)}2} (z,b, r)
e^{2i(r-R)\tau} dr du d\tau\\ && \\ &=& \frac{2^{1+2iR}}\pi\int
(1+u^2)^{-(\frac12+iR)}(I-\partial_r^2)^Na\circ h^u\circ
g^{\tau-\frac{\log(1+u^2)}2} (z,b, r) e^{2i(r-R)\tau} dr du
\frac{d\tau}{(1+4\tau^2)^N} ,
\end{array} \end{equation}

where the $r$-integration by parts is used to gain powers of $\tau$ and make the $\tau$-integral convergent. We also know from the definition of $\scal_\infty^{\,\infty}$ that
$|(I-\partial_r^2)^Na(z, b, r)|\leq  C_{N, M, x_0}(1+r^2)^{-M} e^{-M d(z, x_o)}$
for any $N, M>0$ and any given $x_o$.

If $z$ stays in a fixed compact set, denoting $(\tilde z, b)=h^u\circ g^{\tau-\frac{\log(1+u^2)}2} (z,b)$, one can check by hand that
$\exp d(\tilde z, x_o)\geq C_1(1+|u|)e^{|\tau|} -C_2$, with $C_1, C_2>0$. For instance for $(z, b)=e\in G$, we compute explicitly
$$a_{\tau-\frac{\log(1+u^2)}2} n_u= \begin{pmatrix} \frac{e^{\tau/2}}{(1+u^2)^{1/4}} & \frac{ue^{\tau/2}}{(1+u^2)^{1/4}}\\ & \\
0 & e^{-\tau/2}(1+u^2)^{1/4} \end{pmatrix} .$$
In the Poincar\'e upper plane model, identified with $PSL(2, \R)/K$, this element represents a unit
tangent vector based at
$$\tilde z= \frac{e^{\tau}}{(1+u^2)^{1/2}}i + \frac{ue^{\tau}}{(1+u^2)^{1/2}}.$$
The hyperbolic distance of this point to the origin $x_o=i$ is given by
$$\cosh d(\tilde z, i)=1+2\left[ \left(\frac{e^{\tau}}{(1+u^2)^{1/2}}-1\right)^2+\frac{u^2e^{2\tau}}{(1+u^2)} \right]4(1+u^2)^{1/2}e^{-\tau}\geq 8(1+u^2)^{1/2}e^{|\tau|}-7.$$
It follows that
\begin{equation}\label{e:decay}|(I-\partial_r^2)^Na(\tilde z, b, r)|\leq C(1+r^2)^{-K}(1+u^2)^{-K/2}e^{-N|\tau|}
\end{equation}
(with $C$ uniform as $z$ stays in a compact set), so that the integral \eqref{e:pointwise} does make sense, and defines a continuous function of the variables $(z,b, R)$. To keep this paper reasonably short, we do not investigate
the additional regularity properties of $\lcal a$.

Using \eqref{e:decay}, we also see that, for fixed $(z, b, R)$, we have $|(\lcal a)\circ g^t (z,b, R) |\leq
C_{z, b, R, M} e^{-M|t|}$ (for $M>0$ arbitrary), and it follows that $PS {\lcal a}(ir, b, ir', b')$ is perfectly well defined for any $r, r'\in\C$, $(b', b)\in B^{(2)}$. The Wigner transform  $\wcal a(ir, b, ir', b')$ is also perfectly well defined, and $\lcal$ has been constructed so that $PS {\lcal a}(ir, b, ir', b')=\wcal a(ir, b, ir', b')$.
We see that
\begin{multline*}PS({G^t\lcal a})(ir, b, ir', b')=e^{-i \frac{(r^2-r^{'2})t}2}PS({\lcal a})(ir, b, ir', b')
\\=e^{-i \frac{(r^2-r^{'2})t}2}\wcal a(ir, b, ir', b')=\wcal (V^t a)(ir, b, ir', b')=PS({\lcal V^t a})(ir, b, ir', b'),\end{multline*}
and inverting this formula we get that $G^t{\lcal a}={\lcal V^t a}$, for all $a\in\scal_{\infty}^{\, \infty} $ (and this equality holds pointwise).

We can now easily extend the intertwining formula to $a\in L_W^2(G
\times \R, dg \times dp(r))$. Using formula \eqref{e:isomHaarW},
we see (in a tautological way) that  $\lcal$ is an isometry from
$L_W^2(G \times \R, dg \times dp(r))$
to $\hcal_{PS}(\D)$, and we have $G^t\circ\lcal=\lcal\circ V^t$, where both sides are
   bounded operators from $L_W^2(G \times \R, dg \times dp(r))$ to $\hcal_{PS}(\D)$.

\end{proof}

Comparing with Lemma \ref{HaarPS}, we note that the norm on $\hcal_{PS}(\D)$ is not equivalent to the norm on $L_W^2(G \times \R, dg \times dp(r))$ (the $dr dr'$-densities differ by an unbounded factor).

 In the Section \ref{s:quotient}, we will mimick this construction to build two Hilbert spaces $\hcal_W(\X)$ and
 $\hcal_{PS}(\X)$ formed of $\Gamma$-invariant symbols, such that $\lcal$ sends   $\hcal_W(\X)$ isometrically to
 $\hcal_{PS}(\X)$, and such that the intertwining formula $G^t\circ\lcal=\lcal\circ V^t$ holds between these two spaces. On the quotient, $\hcal_W(\X)$ will be naturally identified with the space of Hilbert-Schmidt operators via the quantization procedure $\Op_\Gamma$, but will{\em{ not}} be equivalent to $L^2\left((\Gamma\backslash G)\times \R\right)$.

\begin{rem} Since $G^t$ preserves the variable $r$, Proposition \ref{p:inter} still holds if we modify the definition of $\lcal a(g, R)$ by a constant depending only on $R$. Thus, we have the choice
of a normalization factor for $\lcal$. We note that $G^t 1=1$ and $V^t 1=1$, so that it is quite
natural to renormalize $\lcal$ to have formally $\widehat\lcal 1=1$.
This means dividing $\lcal a(g, R)$ by
$$
\int  (1+u^2)^{-(\frac12+iR)} e^{-2i(R-r)\tau}
dr du d\tau=\pi \int  (1+u^2)^{-(\frac12+iR)}   du.$$
 The function $\mu_0(s)=\int_{-\infty}^{+\infty}(1+u^2)^{-s}du$ ($\Re e(s)>\frac12$)
  extends meromorphically to the whole complex plane by $\mu_0(s)=\frac{\Gamma(\frac12)\Gamma(s-\frac12)}{\Gamma(s)} $ (see p. 65-66 in \cite{He}).

 The renormalized $\widehat\lcal$ now satisfies
 $\widehat{PS}_{(ir, b), (-ir', b')}(\widehat\lcal a) = W_{(ir, b), (-ir', b')}(a)$
 if we define the normalized $\widehat{PS}$-distributions by
 \begin{equation} \label{PSNORMED} \widehat{PS}_{(ir, b), (-ir', b')}=  \pi \mu_0\left(\frac12 +i\frac{r+r'}2\right){PS}_{(ir, b), (-ir',
 b')}.\end{equation}

  \end{rem}

\begin{rem} \label{r:holomorph}When working on the quotient, we will need the following properties of $\lcal$, which
follow from its explicit expression \eqref{e:deflcal}.

First note that, if $a\in\scal_\infty^{\,\infty}$, then $\lcal a(g, R)$ has a holomorphic extension to $R\in\C$

Assume that $a(z, b, r)\in\scal_\infty^{\,\infty}$ satisfies in addition, for every $\epsilon>0$, $p$, all $q>0$, every integer $\alpha$, and every $K$-left-invariant differential operator $D$ acting on $B$, a bound of the form
\begin{equation}\label{e:schw2}\sup_{(r, b)} e^{q|r|} \left|\frac{\partial^\alpha}{\partial r^\alpha} D a(\bullet, b, r)\right|_{\ccal^p(G/K)}<+\infty,\end{equation}
in $\{|\Im m(r)|<\frac{\epsilon}2\}$. In other words, we strenghthen the definition of $\scal_\infty^{\,\infty}$ by asking that $a$ decay superexponential fast in $r$, instead of superpolynomially fast. We will denote by $\scal_\omega^{\,\infty}$ the space of such symbols.

Then, for any fixed $g\in G$ and $R\in\C$, the map $t\mapsto \lcal a(g a_t, R)$, originally defined for $t\in\R$, has a holomorphic extension to $t\in\C$. In particular, $(G^t\lcal a) (g, R)$ is well defined for $R\in\C$.

\end{rem}

\subsection{\label{WAVEFLOW} Remark on the wave flow}

Let us briefly discuss the case of the wave flow,
$e^{it\sqrt{-\Lap-1/4}}$ (or alternatively, $e^{it\sqrt{-\Lap}}$).
The corresponding quantum evolution is
\begin{equation} \label{EGORAUTw}  \beta^t (\Op(a)) =e^{-it\sqrt{-\Lap-1/4}}\Op(a) e^{it\sqrt{-\Lap-1/4}}=:\Op(U^t a). \end{equation}
The explicit expression of $U^t$ is given in \cite{Z, Z3}.

Since  $e^{it\sqrt{-\Lap-1/4}}e_{(ir, b)}=e^{itr}e_{(ir, b)}$ and
$e^{it\sqrt{-\Lap-1/4}}e_{(-ir, b)}=e^{itr}e_{(-ir, b)}$ for
$r>0$, we see that $U^t$ defines a unitary operator on
$L^2_W(G\times \R,  dg \times dp(r))$, and that $U^{t}_\sharp
W_{e_{(ir, b)}, e_{(-ir', b')}}= e^{it(r-r')}W_{e_{(ir, b)},
e_{(-ir', b')}}$ for $r, r'>0$.

We also have $g^t_\sharp PS_{e_{(ir, b)}, e_{(-ir', b')}}=
e^{it(r-r')}PS_{e_{(ir, b)}, e_{(-ir', b')}}$ (where $g^t$ is the
unit-speed geodesic flow).

It follows that $(\lcal\circ U^t)_\sharp PS_{e_{(ir, b)},
e_{(-ir', b')}}=(g^t\circ\lcal)_\sharp
 PS_{e_{(ir, b)}, e_{(-ir', b')}}$ for $r, r'>0$.  So in this sense,  $\lcal$ also intertwines the wave group and
the unit-speed geodesic flow. But because we restricted to
positive values of $r$, the result is not apriori as strong as for
the Schr\"odinger flow.

A further defect is that  $e^{it\sqrt{-\Lap-1/4}}$ does not
preserve
 the Schwartz spaces $\ccal^p(\D)$, because $\sqrt{.}$ is not a holomorphic
 function on the complex plane.  On a compact quotient, there
 is also a problem with the definition of $e^{it\sqrt{-\Lap-1/4}}$ for low
 eigenvalues~ (in particular, it is not unitary).  Some of these problems are circumvented by using
 $e^{it \sqrt{-\Lap}}$. But they explain why we prefer in this article to  work with the Schr\"odinger group $e^{it\Lap}$.
 We discuss the intertwining of the wave group and geodesic flow
 further in \cite{AZ2}, where the intertwining involves a
 modification of $\lcal.$

\subsection{Remark on semiclassical symbols and on an ``exact'' Egorov theorem\label{s:egorov}}
In the study of quantum chaos in the semiclassical r\'egime, one
often works with the flow $(e^{it\hbar\frac{\Lap}2})$, in the
limit $\hbar \To 0$. In this case, one considers semiclassical
symbols also depending on $\hbar>0$~: here, for instance, we could
define them as functions $a_\hbar\in \scal_\infty^{\,\infty}$,
having an asymptotic expansion
$$a_\hbar\sim \sum_{k=0}^{+\infty}\hbar^k a_k,$$
$a_k\in \scal_\infty^{\,\infty}$, the expansion being valid in all
the $\scal_\infty^{\,\infty}$--seminorms. In this setting, one
works with the operators $\Op_\hbar(a)\defi \Op(a(z, b, \hbar
r))$.

Introducing the operator $M_\hbar a(z, b, r)\defi  a(z, b, \hbar
r)$, it is natural in this context to introduce the notations
$\widehat\lcal_\hbar=M_\hbar^{-1}\circ \widehat\lcal\circ
M_\hbar$, and $V^t_\hbar= M_\hbar^{-1}\circ V^{t\hbar }\circ
M_\hbar$. Note that $M_\hbar^{-1}\circ G^{\hbar t}\circ M_\hbar=
G^t$.

In the semiclassical setting, the intertwining relation reads
$\widehat\lcal_\hbar\circ V^t_\hbar=G^t\circ \widehat\lcal_\hbar.$
 Using the stationary phase method, both sides of the equality $\widehat\lcal_\hbar\circ V^t_\hbar a_\hbar=G^t\circ \widehat\lcal_\hbar a_\hbar$ can be expanded into powers of $\hbar$, and on both sides the coefficient of $\hbar^0$ is $G^t a_0$~: this is an expression of the so-called ``Egorov theorem'', which says that $V^t_\hbar a_\hbar$ has an expansion starting with $G^t a_0+O(\hbar)$, combined with the fact that $\widehat\lcal_\hbar=I+O(\hbar)$.

 Our intertwining relation is intended to given an  ``exact'' form of the Egorov theorem~:
 if we know that $\widehat\lcal_\hbar$ is invertible, and have an explicit expression for its inverse, we can then define
 \begin{equation}\label{e:newquant}\widetilde{\Op_\hbar}(a)=\Op_\hbar(\widehat\lcal_\hbar^{-1}a),\end{equation}
 and this new quantization procedure will have the property that
 $$e^{-it\hbar\frac{\Lap}2}\widetilde{\Op_\hbar}(a)e^{it\hbar\frac{\Lap}2}=
 \widetilde{\Op_\hbar}(G^t a).$$
Such an exact intertwining relation is often called the ``exact''
Egorov property, and is so far only known in the euclidean case,
where the Weyl quantization $\Op^W_\hbar$ has the property
 that $e^{-it\hbar\frac{\Lap}2}\Op^W_\hbar(a)e^{it\hbar\frac{\Lap}2}=
\Op_\hbar(G^t a)$ (where $a$ is a function on $T^*\R^d=\R^d\times
\R^d$ with reasonable smoothness and decay properties, $\Lap$ is
the euclidean laplacian on $\R^d$, and $G^t$ is the euclidean
geodesic flow).

Being able to compute $\widehat\lcal^{-1}$ amounts to computing
$\lcal^{-1}$, and this can be done formally as follows~: we must
have $W_{(ir, b),(-ir', b')}(\lcal^{-1}a)=PS_{(ir, b),(-ir',
b')}(a)$, and we can recover the expression of $\lcal^{-1}a$ using
the inversion formula Proposition \ref{WIGNERINV}. We find
$$\lcal^{-1}a(z, b, r)=e^{-(\frac12+ir)\la z, b\ra}\int_{b'\in B, r'>0, \tau\in \R} \frac{a(b', b, \tau; \frac{r+r'}2)}{|b-b'|^{1+ir+ir'}}e^{(ir-ir')\tau}e^{(\frac12-ir')\la z, b'\ra} db' dp(r')d\tau.$$
To express this in group theoretic terms, let us consider the
special case $z=0, b=1$ (in the disc model), corresponding to
$g=e\in G$. Using the calculations of \S \ref{s:calc}, we find
$$\lcal^{-1}a(e, r)= \int  a\left(n_u a_{\tau+\frac{\log(1+u^2)}2}; \frac{r+r'}2\right)   \left(1+u^2\right)^{\frac{-1+ir+ir'}2}e^{(ir-ir')\tau} \frac{2^{-(1+ir+ir')}}{\pi}du dp(r') d\tau.$$
More generally, using $G$-equivariance of the formulae,
$$\lcal^{-1}a(g, r)= \int  a\left(gn_u a_{\tau+\frac{\log(1+u^2)}2}; \frac{r+r'}2\right)   \left(1+u^2\right)^{\frac{-1+ir+ir'}2}e^{(ir-ir')\tau} \frac{2^{-(1+ir+ir')}}{\pi}du dp(r') d\tau.$$

Using \eqref{e:newquant}, one defines a new quantization procedure
 satisfying the exact Egorov property. A drawback is that the regularity properties of $\lcal^{-1}$ are not well understood.
One can also note that, contrary to what is usually expected from
a quantization procedure, $\widetilde{\Op_\hbar}(a)$ is not the
multiplication by $a$ if $a=a(z)$ is a function on $G/K$.

  \section{Intertwining the geodesic flow and the Schr\"odinger group on a compact quotient \label{s:quotient}}

 \subsection{A remark on the spectrum of the geodesic flow and the Schr\"odinger flow}
 In this section, we investigate the meaning of the formula
 \begin{equation}\label{e:intert}\lcal \circ V^t=G^{t}\circ\lcal\end{equation}
 on a compact quotient. We already saw that this formula holds as a pointwise equality, when $a\in \scal_\infty^{\, \infty}$. If a formula such as  \eqref{e:intert} holds on a compact quotient, one would be tempted to infer that there is an explicit relation
 between the spectrum of the geodesic flow and the spectrum of the Schr\"odinger flow. On the other hand, one might object that the spectrum of $g^t$ on $L^2(\Gamma\backslash G)$
 is continuous (a manifestation of the ergodicity of the geodesic flow), whereas the spectrum of the quantum evolution $\alpha^t$ on the space of Hilbert-Schmidt operators is discrete. Of course, there is no contradiction at the end, since the spectrum depends on the space where an operator acts, and since $\lcal, \lcal^{-1}$ are unbounded operator.

In fact, the theory of resonances for the geodesic flow already describes its spectrum
on spaces other than $L^2$, and finds some Banach spaces of distributions where the
geodesic flows has some discrete spectrum (see the work of Ruelle and others, e.g.
\cite{BT, BKL, BL, FRS, GL, Liv, Ru87, Rugh92, Rugh96}). This approach was especially developed to describe
 the correlation spectrum of the geodesic flow for smooth (or H\"older) functions. As recalled
  in \cite{AZ}, the distributions $\epsilon_{\nu_j}$ are the generalized eigenfunctions arising
   in the resonance expansion of the geodesic flow. We must also note that the $\epsilon_{\nu_j}$
    are precisely the off-diagonal Patterson-Sullivan distributions $PS_{\nu_j, \frac{i}2}$, associated with the pairs of eigenfunctions
$\phi_{j}$ and the constant function $\equiv 1$, with spectral parameters, respectively, $\nu_j$ and $\frac{i}2$.
As to the other off-diagonal Patterson-Sullivan distributions $PS_{\nu_j, -\nu_k}$,
we saw they are also generalized eigenfunctions of the geodesic flow, but their spectral
 interpretation remains unclear~: they do not appear in the usual resonance theory.

In the next section, we shall construct a Hilbert space of distributions $\hcal_{PS}$,
 on which the geodesic flow acts, with dual eigenbasis the whole family $PS_{\nu_j, -\nu_k}$.
 In Definition \ref{introdefPS}, we introduced the Patterson-Sullivan distributions
  $PS_{\nu_j,-\nu_k}=PS_{(j,\nu_j),(k,-\nu_k)}$ associated with the pair of eigenfunctions $(\phi_j, \phi_k)$ and the
 choice of spectral parameters $(\nu_j, -\nu_k)$. On the quotient, the space $\hcal_{PS}$
  will be constructed by transposing the analysis done in \S \ref{s:UC}, replacing the distributions
$PS_{e_{(\nu,b)}, e_{(-\nu', b')}}$ by the family $PS_{\nu_j,-\nu_k}$.
A drawback is that elements of $\hcal_{PS}$ cannot be characterized in a simple way in terms
 of their regularity/growth properties (for instance, $\hcal_{PS}$ does not coincide
 with one of the known Sobolev spaces). However, we will give sufficient conditions for a distribution to belong to $\hcal_{PS}$ in Section \ref{HILBERT}.

 \begin{prop} \label{PSGAMMA} The distribution  $PS_{\nu_j,-\nu_k}$ is $\Gamma$-invariant. \end{prop}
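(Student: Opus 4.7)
The strategy is a direct computation in the $(b',b,\tau)$ coordinates, verifying that the density defining $PS_{\nu_j,-\nu_k}$ transforms correctly under the left action of $\gamma\in\Gamma$. From \eqref{e:gaction}, this action sends $(b',b,\tau)$ to $(\gamma b', \gamma b, \tau+\delta_\gamma(b',b))$ with
$$\delta_\gamma(b',b)=\frac{\langle \gamma\cdot o,\gamma\cdot b\rangle-\langle\gamma\cdot o,\gamma\cdot b'\rangle}{2},$$
so invariance amounts to showing that, pulled back by this diffeomorphism, the expression defining $PS_{\nu_j,-\nu_k}$ returns to itself.

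The needed transformation laws are already in the paper. First, $\Gamma$-invariance of $\phi_j$, combined with the uniqueness in Theorem~\ref{Helga}, gives the conformal rule \eqref{e:conform} for the boundary distributions, which I will write in the equivalent form $T_{\nu_j}(d(\gamma b))=e^{-(1/2+\nu_j)\langle\gamma\cdot o,\gamma\cdot b\rangle}T_{\nu_j}(db)$, and similarly $\overline{T_{-\nu_k}}(d(\gamma b'))=e^{-(1/2-\overline{\nu_k})\langle\gamma\cdot o,\gamma\cdot b'\rangle}\overline{T_{-\nu_k}}(db')$. Second, the basic identity \eqref{BID} (with $g=\gamma$, $\beta=b$, $\beta'=b'$) gives
$$|\gamma b-\gamma b'|^{1+\nu_j-\overline{\nu_k}}=|b-b'|^{1+\nu_j-\overline{\nu_k}}\cdot e^{-\frac{1+\nu_j-\overline{\nu_k}}{2}(\langle\gamma\cdot o,\gamma\cdot b\rangle+\langle\gamma\cdot o,\gamma\cdot b'\rangle)}.$$
Third, the $\tau$-coordinate changes by $\delta_\gamma$, contributing the factor $e^{(\nu_j+\overline{\nu_k})\delta_\gamma}$, while the Lebesgue $d\tau$ is unchanged.

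The final step, which is the only place any care is needed, is algebraic: collecting the coefficients of $\langle\gamma\cdot o,\gamma\cdot b\rangle$ and $\langle\gamma\cdot o,\gamma\cdot b'\rangle$ arising from numerator, denominator and $\tau$-shift, one finds
$$-\left(\tfrac12+\nu_j\right)+\tfrac{1+\nu_j-\overline{\nu_k}}{2}+\tfrac{\nu_j+\overline{\nu_k}}{2}=0,\qquad -\left(\tfrac12-\overline{\nu_k}\right)+\tfrac{1+\nu_j-\overline{\nu_k}}{2}-\tfrac{\nu_j+\overline{\nu_k}}{2}=0,$$
so all exponential factors cancel and the pulled-back density is identical to $PS_{\nu_j,-\nu_k}$.

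I do not expect a genuine obstacle here; the only subtle point is keeping track of conjugations (since $\overline{T_{-\nu_k}}$ is paired with the unconjugated $\nu_k$-exponent in the Helgason transform), but once the transformation law for $\overline{T_{-\nu_k}}$ is written with $\overline{\nu_k}$ in place of $\nu_k$, the cancellations above are forced. Equivalently, one could deduce the statement from Proposition~\ref{PSEINTRO}, noting that the two factors $e^{(1/2+\nu_j)\langle z,b\rangle}T_{\nu_j}(db)\,\Vol(dz)$ and (via $\iota$) $\overline{e^{(1/2-\nu_k)\langle z,b'\rangle}T_{-\nu_k}(db')}\,\Vol(dz)$ are each $\Gamma$-invariant distributions on $S\D$ by the same conformal computation, so their well-defined product is as well; this is just the same identity packaged differently.
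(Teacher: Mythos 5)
Your computation is correct and is essentially identical to the paper's own proof: both verify invariance under the left $\Gamma$-action in the $(b',b,\tau)$ coordinates by combining the conformal transformation law \eqref{e:conform} for the boundary distributions, the $\tau$-shift from \eqref{e:gaction}, and the identity \eqref{BID} for $|b-b'|$, and the exponent bookkeeping you display is exactly the cancellation the paper carries out. The alternative route via Proposition \ref{PSEINTRO} that you mention is, as you say, the same identity repackaged.
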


\begin{proof} Recall that
$$PS_{\nu_j,-\nu_k}(db',db, d\tau)
=\frac{T_{\nu_j}(db)\overline{T_{-\nu_k}}
(db^\prime)}{|b-b^\prime|^{1+\nu_j-\overline{\nu_k}}} e^{(\nu_j+\overline{\nu_k})\tau}d\tau.
$$

We use the formulae of Section \ref{s:HA}, and the fact that
$\gamma^{-1}_\sharp T_{\nu_j}(db)=e^{-(\frac12 +\nu_j)\la
\gamma\cdot o, \gamma\cdot b\ra}T_{\nu_j}(db)$ for
$\gamma\in\Gamma$.
\begin{multline*} \gamma^{-1}_\sharp PS_{\nu_j,-\nu_k}(db',db, d\tau)=\frac{ \gamma^{-1}_\sharp T_{\nu_j}(db) \gamma^{-1}_\sharp\overline{T_{-\nu_k}}(b^\prime)}{|\gamma.b-\gamma.b^\prime|^{1+\nu_j-\overline{\nu_k}}}e^{(\nu_j+\overline{\nu_k})(\tau+\frac{\la \gamma o, \gamma b\ra- \la \gamma\cdot o, \gamma\cdot b'\ra}2)}d\tau   \\
=e^{-(\frac12+\nu_j)\langle \gamma\cdot o,
\gamma\cdot b\rangle}e^{-(\frac12-\overline{\nu_k})\langle \gamma\cdot o,
\gamma \cdot b^\prime\rangle}\frac{T_{\nu_j}(db)\overline{T_{-\nu_k}}(db')}{|b-b^\prime|^{1+\nu_j-\overline{\nu_k}}}e^{\frac12(1+\nu_j-\overline{\nu_k})
(\langle \gamma\cdot o, \gamma \cdot b\rangle+\langle \gamma o\cdot, \gamma\cdot
b^\prime\rangle)}
e^{(\nu_j+\overline{\nu_k})(\tau+\frac{\la \gamma \cdot o, \gamma \cdot b\ra- \la \gamma \cdot o, \gamma \cdot b'\ra}2)}d\tau \\= \frac{T_{\nu_j}(db)\overline{T_{-\nu_k}}(db')}{|b-b^\prime|^{1+\nu_j-\overline{\nu_k}}}
e^{(\nu_j+\overline{\nu_k})\tau }d\tau
= PS_{\nu_j,-\nu_k}(db',db, d\tau).
\end{multline*}

 \end{proof}

\subsection{Hilbert-Schmidt symbols}
We first have to remark that it does not make sense to try to generalize Proposition \eqref{p:inter}
to $\Gamma$-invariant symbols, in the form of a {\em pointwise} equality $\lcal_\Gamma \circ V_\Gamma^t a (z, b, r)=G_\Gamma^t\circ\lcal_\Gamma   a (z, b, r)$. Indeed, $V_\Gamma^t$ really acts on symbols {\em via} the definition
$e^{-it\frac{\Lap_\Gamma}2}\Op_\Gamma(a)e^{it\frac{\Lap_\Gamma}2}=\Op_\Gamma(V_\Gamma^t a)$, but on a compact quotient there is no inversion formula allowing to recover the symbol $a$ in a unique way from the operator $\Op_\Gamma(a)$. In fact, one can easily see that two different symbols on $S\X\times \R$ can yield the same operator on the quotient.

To put this another way, consider two symbols $\tilde a, \tilde b\in \scal_\infty^{\,\infty} $ which have the same periodization $a\defi\Pi \tilde a=\Pi \tilde b$. We can try to define
$V^t_\Gamma a$ by $V^t_\Gamma a\defi\Pi  V^t \tilde a$, but there is no reason why we should have
$\Pi  V^t \tilde a=\Pi  V^t \tilde b$ {\em pointwise}, in other words this definition of $V^t_\Gamma a$
will depend on the choice of $\tilde a, \tilde b$. On the other hand, $\Op_\Gamma(V^t_\Gamma a)$
does not depend on any choice, and coincides with $e^{-it\frac{\Lap_\Gamma}2}\Op_\Gamma(a)e^{it\frac{\Lap_\Gamma}2}$. This explains why we shall not try to prove that $\lcal_\Gamma \circ V_\Gamma^t a (z, b, r)=G_\Gamma^t\circ\lcal_\Gamma   a (z, b, r)$ pointwise -- one needs in some sense to quotient out by the symbols $a$ such that $\Op_\Gamma(a)=0$.

The aim of the section is to present a (simple) construction of two Hilbert spaces $\hcal_W$ and $\hcal_{PS}$, having respectively the families $(W_{\nu_j, -\nu_k})$ and $(PS_{\nu_j, -\nu_k})$ as dual orthonormal bases, with the following properties~: $V^{t}$ acts unitarily on $\hcal_W$, $G^{t}$
acts unitarily on $\hcal_{PS}$, $\lcal$ sends $\hcal_{W}$ isometrically to $\hcal_{PS}$, and the intertwining relation \eqref{e:intert} holds on these spaces. This constructions mimicks what was done at the end of \S \ref{s:statement}, with the additional difficulty just mentioned, of having to
quotient out by the symbols $a$ such that $\Op_\Gamma(a)=0$.

 We start with the Hilbert
space $HS(\X)\simeq L^2(\X \times
\X)$ of
  Hilbert-Schmidt
operators on  the compact quotient $\X=\Gamma \backslash G/K$. On this space, the quantum evolution $\alpha^t$
has the orthonormal spectral expansion
\begin{equation}\label{SPECALPHA}  \alpha^t = \sum_{j, k} e^{i t\frac{(\nu^2_j - \overline{\nu_k}^2)}2} \left( \phi_{j}\otimes \phi_{ k}^*\right)\otimes \left( \phi_{j}\otimes \phi_{
k}^*\right)^* \end{equation} (here, $\nu_k^2\in\R$, but we wrote $\overline{\nu_k}^2$ to underline the fact that the dependence of the expression is antiholomorphic w.r.t. $\phi_k, \nu_k$).
The Hilbert-Schmidt norm is defined by $\Vert A\Vert^2_{HS(\X)}=\Tr(A A^\dagger)$, associated with the scalar product $\la A, B\ra_{HS(\X)}=\Tr (AB^\dagger)$.
Starting with a $\Gamma$-invariant symbol $a$ belonging to $\Pi\scal_\infty^{\,\infty}$, we obtain a Hilbert-Schmidt operator $\Op_\Gamma(a)\in HS(\X)$, with norm
\begin{eqnarray*}\Vert\Op_\Gamma(a) \Vert^2_{HS(\X)}&=&\Tr_{L^2(\X)}  \Op_\Gamma (a) \Op_\Gamma
(a)^\dagger\\
=\sum_{j, k} |\Tr  \Op_{\Gamma}(a)\phi_{j}\otimes  \phi_{k}^*|^2
&=&\sum_{j, k} |W^\Gamma_{j, k}(a)|^2.
\end{eqnarray*}

This suggests to define the Hilbert space $\hcal_W$ as as follows:

\begin{defn} \label{HCALWDEF}
 $\hcal_W$  is the completion of the symbol space $\Pi\scal_\infty^{\,\infty}$
with respect to the norm
$$\Vert a\Vert^2_W=\sum |W^\Gamma_{j, k}(a)|^2.$$
The scalar product on $\hcal_W$ is defined by
$$\la a, b\ra_W=\sum W^\Gamma_{j, k}(a) \overline{W^\Gamma_{j, k}(b)}.$$
\end{defn}

Here we have to stress two important facts~:
 \begin{itemize}
 \item The norm $\Vert .\Vert_W$ is actually a seminorm. Again, this comes from the fact that we can have $\Op_\Gamma(a)=0$ although $a\not=0$. The reader should not be too surprised by this fact, which already occurs in the euclidean case when one wants to study the Weyl quantization $\Op^W(a)$
 of a symbol $a(x, \xi)$ (where $(x, \xi)\in T^*\R^d$), $2\pi\Z^d$-periodic in the $x$-variable. If $a\not=0$, then $\Op^W(a)$ defines a non-vanishing operator on $L^2(\R^d)$, but the periodization of this operator may vanish when acting on the torus $L^2(\R^d/2\pi\Z^d)$. Actually, this happens if $a(x, \xi)$ vanishes when $\xi$ is a half-integer. In the hyperbolic setting, a similar phenomenon occurs. Although the operator $\Op(a)$ is non-zero on $L^2(\D)$, its periodization $\Op_\Gamma(a)$ can vanish when acting on $L^2(\X)$. The difficulty is that there no easy characterization of the symbols $a$ such that $\Op_\Gamma(a)=0$.
 \item Another related issue is the following. On the universal cover $\D$, we have seen that the Hilbert-Schmidt norm of $\Op(a)$ coincides with the $L^2$-norm of $a$, seen as a function on $S\D\times \R$ endowed with the measure $e^{\la z, b\ra}dz db\times dp(r)$.
 This is no longer true on the quotient. In other words, for a $\Gamma$-invariant symbol $a$, $\Vert a\Vert_W$ is not the $L^2$-norm of $a$ on $S\X\times \R$ (see \S \ref{HSVSW}). Again, the same phenomenon already occurs in the euclidean case.
 \end{itemize}

 By definition,  $W^\Gamma_{j, k}$ is a bounded linear functional on
$\hcal_W$ for all $j, k$, in other words $W^\Gamma_{j,
k} \in \hcal_W^*,$
where
 $\hcal_W^*$ is the dual Hilbert space to $\hcal_W$. The Riesz theorem endows $\hcal_W^*$ with a dual inner product, and the $W_{j, k}$ form an orthonormal basis of
 $\hcal_W^*$.
For $b\in \hcal_W$, the series
\begin{equation}\label{VT=} \begin{array}{lll}   \sum_{j, k}
\overline{W^\Gamma_{j, k}(b) } W^\Gamma_{j,
k}
\end{array}\end{equation}
converges in $\hcal_W^*$.
In fact, the operator
$\sum_{j, k}
  \overline{ W^\Gamma_{j,
k} (\bullet)}  W^\Gamma_{j,
k}: \hcal_W \to \hcal_W^*
$
is just the standard (antilinear, unitary) isomorphism  $b \to
\langle \bullet,b \rangle_W$  from $\hcal_W \to \hcal_W^*$.

As described earlier, we can define $V_\Gamma^t$ acting on $\Pi\scal_\infty^{\,\infty}$ by $V_\Gamma^ta=\Pi V^t \tilde a$, if $a=\Pi \tilde a$ and $\tilde a \in \scal_\infty^{\,\infty}$. This
definition depends on the choice of $\tilde a$, however $W^\Gamma_{j, k}( V_\Gamma^t a)$ does not.
We then note that the evolution $V_\Gamma^t$ can be extended to $\hcal_W$, and is obviously unitary, since
we have $W^\Gamma_{j, k}( V_\Gamma^t a)= e^{it\frac{\nu_j^2-\overline{\nu_k}^2}2}W^\Gamma_{j, k}(a).$
The Wigner distributions form an orthonormal basis of eigenfunctions of the adjoint $V_{\Gamma\sharp}^{t}$ in  $ \hcal_W^*$~: we have $V_{\Gamma\sharp}^{t}(W^\Gamma_{j,
k})= e^{it\frac{\nu_j^2-\overline{\nu_k}^2}2} W^\Gamma_{j,k}.$

It is difficult to find a full characterizations of elements in $\hcal_W$ in terms of usual Sobolev spaces. However, in \S \ref{HILBERT},
we shall give a sufficient condition for a function to belong to of $\hcal_W$, in terms of its regularity and decay rate at infinity (in the $r$ variable).

Carrying this forward one step, we define another Hilbert space $\hcal_{PS}$. We consider the space $\cap_\epsilon \ccal(\R_\epsilon, C_c^\infty(G))$ of symbols $ \tilde a (g, r)$ which are holomorphic in $r\in\C$, taking values in the space of smooth compactly supported functions of $(z, b)$ (here the extension to $r\in\C$ is needed to deal with the case of low eigenvalues of the laplacian); and such that, for every integer $\alpha$ and any $k,q>0$, for any $\epsilon$, we have
 $$\sup_{r\in \R_\epsilon}\left\Vert (1+|r|)^q \frac{\partial^\alpha}{\partial r^\alpha} a\right\Vert_{C^k_{z, b}} <+\infty.$$

\begin{defn} \label{HCALPSDEF}
 We define $\hcal_{PS}$ as the closure of $\Pi \left(\cap_\epsilon \ccal(\R_\epsilon, \ccal_c^\infty(G))\right)$ under the ``scalar product''
 \begin{equation} \label{PSIP} \langle f, g \rangle_{PS} \defi \sum_{j, k} PS^\Gamma_{\nu_j,
-\nu_k}(f) \overline{PS^\Gamma_{\nu_j, -\nu_k} (g)}.
\end{equation}
\end{defn}
An alternative which is closer to  Definition \ref{HCALWDEF} would
be to use the normalized Patterson-Sullivan distributions
(\ref{PSNORMED}). Here we have to choose a value of the spectral
parameter $\nu_j$ for each $j$, and we make the standard (but
somehow arbitrary) choice~: $\nu_j\in[\frac12, 1]\cup i\R_+$. The
corresponding (semi)norm will be denoted $\Vert.\Vert_{PS}$. We
then have $PS^\Gamma_{\nu_j, -\nu_k} \in \hcal_{PS}^*,$ and
$\sum_{j, k} \overline{PS^\Gamma_{\nu_j, -\nu_k} (\bullet)}
PS^\Gamma_{\nu_j, -\nu_k}: \hcal_{PS} \to \hcal_{PS}^*$ is the
standard (antilinear, unitary) isomorphism from $\hcal_{PS} \to
\hcal_{PS}^*$. The evolution $G^t_\Gamma$ can be extended to a
unitary operator on $\hcal_{PS}$, or to $\hcal_{PS}^*$ by duality.

\subsection{Proof of Theorem \ref{INTintrodualINTRO}  \label{s:interGamma}}

We first state a more complete and precise version of the theorem.
The Hilbert spaces $\hcal_W, \hcal_{PS}$ are discussed in more
detail in \S \ref{HILBERT}.

\begin{maintheo}
 \label{INTintrodual} We have:

 \begin{enumerate}

 \item $V_\Gamma^t$ is a unitary operator on $\hcal_W$ and
 on $\hcal_W^*$. The Wigner distributions form an orthonormal
 basis of eigenvectors of $V_{\Gamma\sharp}^{t}$ in $\hcal_W^*$.

\item The maps  $\lcal_\Gamma: \hcal_{W} \to \hcal_{PS}, $ and $
\lcal_{\Gamma\sharp} : \hcal_{PS}^* \to \hcal_{W}^*$  are
isometric isomorphisms;  $ \lcal_{\Gamma\sharp} $ sends
$PS_{\nu_j, -\nu_k}$ to $W_{j, k}. $ For $a\in
\Pi\scal_{\omega}^\infty$, we have
$$\lcal_\Gamma\circ V^t_\Gamma a=G_\Gamma^t\circ \lcal_\Gamma a,$$
as an equality between two elements of $\hcal_{PS}$.

\end{enumerate}

\end{maintheo}

\begin{proof}
  The intertwining
relation on the quotient follows from the intertwining on the
universal cover, and from the relations
$$W^\Gamma_{j, k}(a)=W_{j, k}(\tilde a)=\int W_{e_{(\nu_j, b)}, e_{(-\nu_k, b')}}(\tilde a)dT_{\nu_j}(db)\overline{dT_{-\nu_k}}(db'),$$
\begin{equation}\label{e:PSquotient}PS^\Gamma_{\nu_j, -\nu_k}(a)=PS_{\nu_j, -\nu_k}(\tilde a)=\int PS_{e_{(\nu_j, b)}, e_{(-\nu_k, b')}}(\tilde a)dT_{\nu_j}(db)\overline{dT_{-\nu_k}}(db'),
\end{equation}
valid for any $\Gamma$-periodic $a$, and any $\tilde a$ such that $a=\Pi \tilde a$,  provided
$\tilde a$ has the sufficient smoothness and decay so that these expressions are well defined.

Take $a\in \Pi\scal_{\infty}^{\,\infty} $,
that is, $a=\Pi \tilde a$ with $\tilde a\in \scal_{\infty}^\infty$. We know from the calculations in \S \ref{s:UC} that
\begin{eqnarray*}
PS_{\nu_j, -\nu_k}(\lcal \tilde a)=
\int PS_{e_{(\nu_j, b)}, e_{(-\nu_k, b')}}(\lcal \tilde a)dT_{\nu_j}(db)\overline{dT_{-\nu_k}}(db')
\\=\int W_{e_{(\nu_j, b)}, e_{(-\nu_k, b')}}(\tilde a)dT_{\nu_j}(db)\overline{dT_{-\nu_k}}(db')
= W^\Gamma_{j, k}(a).
\end{eqnarray*}
Theorem \ref{t:WPS} is proved this way, and we also get a proof of Proposition \ref{PSEINTRO}, by combining Proposition \ref{PSERB2} on the universal cover, with formula \eqref{e:PSquotient}.

Now, take $a\in \Pi\scal_{\omega}^{\,\infty} $,
that is, $a=\Pi \tilde a$ with $\tilde a\in \scal_{\omega}^\infty$ (see Remark \ref{r:holomorph})
We know, from Remark \ref{r:holomorph}, that $G^t \lcal \tilde a(g, R)$ is well-defined for all $R\in\C$, and we have
\begin{eqnarray*}
PS_{\nu_j, -\nu_k}(G^t \lcal \tilde a)&=& e^{it\frac{\nu_j^2-\overline{\nu_k}^2}2}PS_{\nu_j, -\nu_k}( \lcal \tilde a)\\&=&e^{it\frac{\nu_j^2-\overline{\nu_k}^2}2} W_{j, k}(\tilde a)\\&=& W_{j, k}(V^t \tilde a)=PS_{\nu_j, -\nu_k}(\lcal\circ V^t \tilde a).
\end{eqnarray*} The identity holds for all $j, k$ (everything is well defined even for low eigenvalues of the laplacian).

From this, we can deduce the following~:
\begin{itemize}
\item we can define $\lcal_\Gamma a\in \hcal_{PS}$ by $PS^\Gamma_{\nu_j, -\nu_k}(\lcal_\Gamma a)=PS_{\nu_j, -\nu_k}(\lcal \tilde a)$, and this definition does depend on the choice of $\tilde a$.
\item the adjoint $\lcal_{\Gamma\sharp}$ sends $PS^\Gamma_{\nu_j, -\nu_k}$ to $W^\Gamma_{j, k}$, hence
$\lcal_{\Gamma\sharp}$ is an isometry from $\hcal_{PS}^*$ to $\hcal_{W}^*$, and
$\lcal_\Gamma$ is an isometry from $\hcal_{W}$ to $\hcal_{PS}$.
\item the family $PS^\Gamma_{\nu_j, -\nu_k}$ forms an independent family in $\hcal_{PS}^*$~:
if $\sum_{j, k}\alpha_{jk}PS^\Gamma_{\nu_j, -\nu_k}=0$ with $\sum|\alpha_{jk}|^2<+\infty$, then
$\alpha_{jk}=0$. This comes from the fact that the $W_{j, k}$ form an independent family in $\hcal_W^*$, and $\lcal_{\Gamma\sharp}PS^\Gamma_{\nu_j, -\nu_k}=W^\Gamma_{j, k} $. It follows that the family $PS^\Gamma_{\nu_j, -\nu_k}$ is an orthonormal basis of
$\hcal_{PS}^*$.
\item for $a\in \Pi\scal_{\omega}^\infty$, we have
$$\lcal_\Gamma\circ V^t_\Gamma a=G_\Gamma^t\circ \lcal_\Gamma a,$$
as an equality between two elements of $\hcal_{PS}$.
\end{itemize}

\end{proof}

We note that Hilbert-Schmidt symbols are a very special class of
symbols. But it is mainly a condition on the $r$-growth of
symbols. By multiplying by powers of $r$, the intertwining
operator and the time evolution extend readily to more general
symbols.

\section{Further discussion about the Hilbert spaces $\hcal_W$ and $\hcal_{PS}$\label{HILBERT}}

We now discuss the elements of the Hilbert spaces $\hcal_W$ and
$\hcal_{PS}$ in more detail~: we describe sufficient conditions
for a function to belong to $\hcal_W$ and $\hcal_{PS}$, in terms
of regularity and decay. We use the regularity properties of the
boundary values $T_{\nu_j}$ of eigenfunctions, described by Otal
\cite{O}. In the automorphic case, the regularity properties can
be read off directly from the automorphy equation (see e.g.
\cite{MS,MS2}).

\subsection{H\"older continuity of $T_{\nu_j}$\label{s:Otal}}
Following Otal \cite{O}, we say that a function $F$ defined on $\R$ is $2\pi$-periodic if there is a constant $C$ such that $F(x+2\pi)=F(x)+C$, for all $x$. If $F$ is locally integrable, its derivative $DF$ yields a well-defined distribution on $S^1=\R/2\pi\Z$,
$$DF(\varphi)=-\int_0^{2\pi}\frac{\partial \varphi}{\partial \theta} F(\theta)d\theta+\varphi(0)\left[F(2\pi)-F(0)\right],$$
for every smooth function $\varphi$ on $S^1$.
For $0
\leq \delta \leq 1$ we say that a $2 \pi$-periodic function $F: \R
\to \C$ is $\delta$-H\"older if $|F(\theta) - F(\theta')| \leq C
|\theta - \theta'|^{\delta}.$ The smallest constant is denoted
$||F||_{\delta}$. We denote the  Banach space of $\delta$-H\"older
functions with norm $||F||_{\delta}$ by $\Lambda_{\delta}$.

We recall:
\begin{thm} \label{O} (\cite{O}) Suppose that $\phi$ is a laplacian
eigenfunction of eigenvalue $-s (1 - s)=-\left(\frac14+r^2\right)$, with $s=
\frac{1}{2} + i r$ and $\Re e(s) \geq 0$. Assume that  $||
\phi||_{\infty} < \infty$ and $||\nabla
\phi||_{\infty} < \infty$. Then its Helgason boundary value $T_{s,
\phi}$ is the derivative of a $\Re e(s) $-H\"older periodic function $F$.

In addition, letting $\delta=\Re e (s)$, we have
$$\Vert F\Vert_\delta\leq \frac{C}{|C_s|}|s|\left(||
\phi||_{\infty} +||\nabla
\phi||_{\infty} \right),$$
where $C>0$ is an absolute constant, and
$C_s=\int_{0}^{+\infty}\int_0^{2\pi}e^{-(1+s)t} P^s(\tanh \frac{t}2, \theta)d\theta dt$; where $P$ is the Poisson kernel of the unit disc.
\end{thm}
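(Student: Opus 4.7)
The plan is to construct an explicit primitive $F$ of $T_{s,\phi}$ on $S^1$ by applying a boundary-trace-type operator to $\phi$, and then to establish the Hölder bound by a geometric decomposition of the resulting integral near the boundary. Starting from Helgason's representation $\phi(z)=\int_B P(z,b)^s\, T_{s,\phi}(db)$, I would construct $F\in C(S^1)$ with $DF=T_{s,\phi}$ in $\dcal'(S^1)$ by pairing $\phi$ against an auxiliary family of eigenfunctions $u_\theta$ on $\D$ whose ``boundary data'' in the complementary spectral parameter $1-s$ is the characteristic function $\chi_{[0,\theta]}$:
$$u_\theta(z) \defi \int_0^\theta P(z, e^{i\psi})^{1-s}\, d\psi.$$
This $u_\theta$ is an eigenfunction with the same eigenvalue $-s(1-s)$ as $\phi$. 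Define $F(\theta)$ as the limit of normalized Wronskian-type integrals of $\phi$ and $u_\theta$ along horocycles $H_\rho$ as $\rho\to 0$ (with $\rho$ the Euclidean distance to $\partial\D$), divided by $C_s$. The two contributions $\phi\,\partial_n u_\theta$ and $u_\theta\,\partial_n\phi$ balance by the shared eigenfunction equation so that the limit exists.

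To verify $DF=T_{s,\phi}$, substitute $\phi=\int P^s\, T_{s,\phi}$ into the definition of $F$ and apply Fubini; the limiting Wronskian-pairing of $P^s(\cdot,\psi)$ with $u_\theta$ along $H_\rho$ reduces, via the standard boundary asymptotics of $P^s$ and $P^{1-s}$, to the characteristic function $\chi_{[0,\theta]}(\psi)$, where the normalization $C_s$ exactly cancels the inversion constant produced by the convolution $P^s*P^{1-s}$ on $S^1$. Consequently $F(\theta)=\int_B \chi_{[0,\theta]}(\psi)\, T_{s,\phi}(d\psi)$, which yields $DF=T_{s,\phi}$.

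For the Hölder estimate, set $\eta\defi|\theta_1-\theta_2|$ and assume $\theta_2<\theta_1$. Then $F(\theta_1)-F(\theta_2)$ is given by the same limiting Wronskian integral applied to $u_{\theta_1}-u_{\theta_2}=\int_{\theta_2}^{\theta_1} P(\cdot,e^{i\psi})^{1-s}\, d\psi$. A direct scaling analysis shows that this function is concentrated in a ``shadow'' near the arc $[\theta_2,\theta_1]$, with size $\sim\eta^{\delta}$ at Euclidean depth $(1-r)\sim \eta$ and decay $\sim (1-r)^{\delta}$ deeper inside, where $\delta=\Re(s)$. I would then split the limiting integral into a ``central'' region $\{1-r\geq\eta\}$, where boundedness $\|\phi\|_\infty$ combined with the pointwise decay of $u_{\theta_1}-u_{\theta_2}$ produces a contribution of order $\eta^{\delta}$ after integration over horocycles of bounded hyperbolic length, and a ``boundary'' region $\{1-r<\eta\}$, which is a hyperbolic ball of bounded size in which $\|\nabla\phi\|_\infty$ controls the oscillation of $\phi$ across the arc. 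Summing the contributions yields
$$|F(\theta_1)-F(\theta_2)|\;\leq\;\frac{C\,|s|}{|C_s|}\bigl(\|\phi\|_\infty+\|\nabla\phi\|_\infty\bigr)\,\eta^{\delta},$$
the factor $|s|$ arising from the normal derivative $\partial_n u_\theta$ in the Wronskian.

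The main obstacle is the well-posedness of Step 1: each of the two terms $\phi\,\partial_n u_\theta$ and $u_\theta\,\partial_n\phi$ individually diverges like $\rho^{-\delta}$ along $H_\rho$, and only their difference is bounded, so the limit requires a cancellation that can be extracted only by using both the eigenfunction equation for $\phi$ and the gradient bound $\|\nabla\phi\|_\infty$. This is precisely why the hypothesis $\|\nabla\phi\|_\infty<\infty$ (and not merely $\|\phi\|_\infty<\infty$) enters the conclusion. Once the primitive is constructed, the Hölder exponent $\delta=\Re(s)$ is forced by the $(1-r)^{\delta}$ decay of $u_{\theta_1}-u_{\theta_2}$; a stronger Lipschitz bound is impossible because $T_{s,\phi}$ is in general only a distribution, not a bounded measure.
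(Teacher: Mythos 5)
First, note that the paper does not prove this statement: it is quoted verbatim from Otal's note \cite{O} (the paper only \emph{uses} the conclusion in \S \ref{HILBERT}), so there is no internal proof to compare against and your argument has to stand on its own.

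It does not, because the object you build in Step 1 is identically zero. Your $u_\theta(z)=\int_0^\theta P(z,e^{i\psi})^{1-s}\,d\psi$ is a smooth eigenfunction on all of $\D$ with the \emph{same} eigenvalue $-s(1-s)$ as $\phi$. Green's second identity on the region enclosed by your curve $H_\rho$ (a disc $\{|z|\le 1-\rho\}$, or the region bounded by any closed curve compactly contained in $\D$ --- and in two dimensions the flux term is even conformally invariant, so it does not matter whether you use the hyperbolic or Euclidean normal and arclength) gives
$$\int_{H_\rho}\bigl(\phi\,\partial_n u_\theta-u_\theta\,\partial_n\phi\bigr)\,d\sigma \;=\;\int_{\Omega_\rho}\bigl(\phi\,\Lap u_\theta-u_\theta\,\Lap\phi\bigr)\,dV\;=\;0$$
for every $\rho>0$, since the integrand on the right vanishes pointwise. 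So the ``cancellation'' you invoke is not a delicate balance requiring $\|\nabla\phi\|_\infty$; it is total, the limit is $0$ for trivial reasons, and $F\equiv 0$. The Wronskian trick only separates the two Frobenius behaviours $\rho^{s}$ and $\rho^{1-s}$ when applied to genuinely different data; to extract the boundary pairing one must instead take a \emph{single}, renormalized term (or, as Otal does, define $F$ directly by an absolutely convergent weighted integral of $\phi$ over a region of $\D$, which is where the normalizing constant $C_s=\int_0^\infty\int_0^{2\pi}e^{-(1+s)t}P^s(\tanh\frac t2,\theta)\,d\theta\,dt$ comes from). Step 2 has an independent circularity: you conclude $F(\theta)=T_{s,\phi}(\chi_{[0,\theta]})$, but pairing the distribution $T_{s,\phi}$ with a discontinuous indicator is undefined until the regularity you are trying to prove is established, and exchanging the limit $\rho\to0$ with the pairing against $T_{s,\phi}(d\psi)$ would require convergence of the kernels in $C^k$ (for $k$ the order of $T_{s,\phi}$), which cannot hold since the claimed limit $\chi_{[0,\theta]}$ is not even continuous. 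The verification of $DF=T_{s,\phi}$ must instead be carried out against smooth test functions $\varphi$, i.e.\ by showing $-\int F\varphi'=T_{s,\phi}(\varphi)$. Finally, the Hölder estimate in Step 3 is only a heuristic as written and cannot be assessed until there is a genuine integral formula for $F(\theta_1)-F(\theta_2)$; note also that on the critical line $\Re e(s)=\frac12$ the two exponents $s$ and $1-s$ have equal real part, so the separation of the ``central'' and ``boundary'' regions has to be done with the oscillatory factors kept explicit.
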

Outside of the finite number of ``small eigenvalues'' of $\X$, we have $\Re e(
s) = \frac{1}{2}$ and hence $T_{\nu_j} $ is the derivative of a
H\"older $1/2$-continuous function.
The upper bound on $\Vert F\Vert_\delta$ given by Otal's proof is quite crude, but will be sufficient for our purposes.

The behavior
of $C_s$ for $s=\frac12+ir$ and $r\To \pm \infty$ can be evaluated
by the stationary phase method.  The calculation is routine but
not completely straightforward  because the domain of integration
is non- compact.  However, for the sake of brevity we omit the
details.
One finds that $C_s\sim Cr^{-1/2}$, with $C\not=0$.

 \subsection{\label{HSVSW} $\hcal_W$ and $L^2_W(G\times \R, dg\times dp(r))$}

 In this section, we clarify the relation between Hilbert-Schmidt
 inner product, which induces the inner product $\langle,
 \rangle_W$ on symbols, and $L^2_W(G\times \R, dg\times dp(r))$. The second term in the
 following
 proposition is the discrepancy between the $||\cdot||_W$ and the $L^2$ norm on symbols (again, we stress the fact that this discrepancy would also appear in a euclidean situation). We denote
 $\dcal $ a fundamental domain for the action of $\Gamma$ on $\D$.

\begin{prop} \label{HSX} Let $Op^{\Gamma}(a) $ be  a Hilbert-Schmidt pseudo-differential operator on $\X$ with
complete symbol $a$. Then $ ||a||_{W}^2\defi
||Op^{\Gamma}(a)||^2_{HS(\X)}$ is given by
$$\begin{array}{lll} ||a||_{W}^2 & = &  \int_{\dcal} \int_B \int_{\R_+}
|a(z, b, r)|^2 e^{\langle z, b \rangle} \Vol(dz) dp(r)db
 \\ && \\
 && +  \sum_{\gamma\in \Gamma\setminus \{e\} } \int_{z\in\dcal, (b, r)\in B\times \R_+}a(z, b,r) \overline{a(\gamma \cdot  z, b,r)}
e^{(\frac12+ir)\langle z, b \rangle}
e^{(\frac12-ir)\langle \gamma \cdot z, b \rangle} dp(r)db
\Vol(dz). \end{array}$$
\end{prop}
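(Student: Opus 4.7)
The plan is to compute $\|\Op_\Gamma(a)\|_{\hs(\X)}^2$ directly from the kernel decomposition $K_a^\Gamma(z,w) = \sum_{\gamma \in \Gamma} K_a(z, \gamma w)$ via the unfolding trick and Plancherel for the Helgason--Fourier transform on $\D$. Since $K_a^\Gamma$ is $\Gamma$-invariant in both variables, we start from $\|\Op_\Gamma(a)\|_{\hs(\X)}^2 = \int_\dcal \int_\dcal |K_a^\Gamma(z,w)|^2 \Vol(dz)\Vol(dw)$ and expand just one copy of the periodization; using the tiling of $\D$ by the translates $\gamma \dcal$, one arrives at
\begin{equation*}
\|\Op_\Gamma(a)\|_{\hs(\X)}^2 = \sum_{\gamma \in \Gamma} \int_\dcal \int_\D K_a(z,w) \overline{K_a(z,\gamma w)} \Vol(dw) \Vol(dz).
\end{equation*}
The $\Gamma$-invariance of the kernel, $K_a(z,\gamma w) = K_a(\gamma^{-1}z, w)$, rewrites the inner integral as $J_\gamma(z) := \int_\D K_a(z,w) \overline{K_a(\gamma^{-1}z, w)} \Vol(dw)$.

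Now substitute the integral formula~(\ref{KERNEL}) for both factors, exchange the order of integration, and perform the $w$-integral first. The $w$-dependence collapses to $e^{(1/2 - ir)\la w, b\ra}e^{(1/2+ir')\la w, b'\ra}$, which by the Helgason--Fourier Plancherel/inversion formula integrates distributionally to a delta at $r=r'$ and $b=b'$ with respect to the measure $dp(r)\,db$. Collapsing this delta gives
\begin{equation*}
J_\gamma(z) = \int_B \int_{\R_+} a(z,b,r) \overline{a(\gamma^{-1}z, b, r)} e^{(1/2+ir)\la z, b\ra} e^{(1/2-ir)\la \gamma^{-1}z, b\ra} dp(r)\,db.
\end{equation*}
The $\gamma = e$ term is $\int_{B\times\R_+} |a(z,b,r)|^2 P(z,b)\,db\,dp(r)$, which after integrating in $z \in \dcal$ yields the first summand of the proposition. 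For $\gamma \neq e$, a reindexing $\gamma \mapsto \gamma^{-1}$ in the sum converts $\{J_{\gamma^{-1}}(z)\}_\gamma$ into precisely the $\gamma$-summands of the second term of the statement.

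The main technical obstacle is justifying the distributional orthogonality of the plane waves $e^{(1/2+ir)\la \cdot, b\ra}$, since these are not in $L^2(\D)$. A rigorous route is to apply Parseval in $L^2(\D)$ to the pair $f_1 = K_a(z, \cdot)$ and $f_2 = K_a(\gamma^{-1}z, \cdot)$, whose Helgason--Fourier transforms are read off directly from~(\ref{KERNEL}) after invoking the Weyl symmetry~(\ref{e:Wsym}) to match the sign conventions on $r$. Once this is in hand, the remainder is routine bookkeeping with the Busemann transformation rules of \S\ref{AC} (needed for $K_a(z,\gamma w) = K_a(\gamma^{-1}z, w)$ and for manipulating $\la \gamma^{-1}z, b\ra$), plus the single reindexing of the sum.
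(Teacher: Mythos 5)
Your proof is correct and follows essentially the same route as the paper's: both unfold one copy of the periodization to reduce $\|\Op_\Gamma(a)\|_{HS(\X)}^2$ to $\sum_{\gamma}\int_{\dcal}\int_{\D}K_a(z,v)\overline{K_a(\gamma\cdot z,v)}\,\Vol(dv)\Vol(dz)$, and then evaluate the inner integral by the Plancherel/inversion formula for the Helgason--Fourier transform exactly as in Proposition \ref{HSD}. The only cosmetic difference is that the paper performs the unfolding via the composition formula for the kernels and the trace, whereas you expand $|K_a^\Gamma|^2$ directly and tile $\D$ by $\gamma\dcal$; these are the same computation.
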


\begin{proof} Recall that $K_a^\Gamma(z, w)=\sum_\gamma K_a(z, \gamma w)$, and that $K_a$ is invariant by the diagonal action of $\Gamma$~: $K_a(\gamma z, \gamma w)=K_a(z,  w)$.

The composition formula for the kernels is~:
\begin{equation*}K_a^{\Gamma} \circ K_b^{\Gamma}(z, w)  = \int_{\D} K_a(z,
v) K_b^{\Gamma}(v,w) \Vol(dv).
\end{equation*}
Hence
$K_a^{\Gamma} \circ K_b^{\Gamma \dagger}(z, w) =  \int_{\D} K_a(z,
v) \overline{K_b^{\Gamma}(w,v)} \Vol(dv).$
  Taking the trace.
  \begin{equation*}
  Tr (K_a^{\Gamma} \circ K_b^{\Gamma\dagger})
 =\sum_{\gamma\in\Gamma }\int_{z\in\dcal}  \int_{v\in \D} K_a(z,
v) \overline{K_b(\gamma \cdot z,v)} \Vol(dv) \Vol(dz) .
\end{equation*}
The rest of the calculation proceeds as in Proposition \ref{HSD}, using the Fourier inversion formula.
The first term corresponds to $\gamma = e$ and the second term to
$\gamma \not= e$.

\end{proof}

\subsection{$\hcal_W$}

We now describe a large class of elements of  the Hilbert space
$\hcal_W$. That is, we determine sufficient conditions on $a $
so that $\Op_\Gamma(a)$ is Hilbert-Schmidt, or in terms of Wigner
distributions, so that
\begin{equation} ||a||_W^2 = \sum_{j, k} | W^\Gamma_{j,
k} (a)|^2< \infty.
\end{equation}

In the following, $\langle x \rangle = (1 + |x|^2)^{1/2}$. If $C$ is an operator, we define  $ad(\Lap) C = [\Lap, C]$. We denote $\lambda_j=-\left(\frac14+r_j^2\right)$ the laplacian eigenvalues.
The following is of course not optimal, but gives an adequate
idea of a large class of elements in $\hcal_W$.

\begin{prop} If $\sup_{r \in \R_+} \langle r \rangle^6
||a_r||_{C^6} < \infty$, then $a \in \hcal_W$. \end{prop}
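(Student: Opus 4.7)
Since $\|a\|_W = \|\Op_\Gamma(a)\|_{HS(\X)}$, the goal is to show that $\Op_\Gamma(a)$ is Hilbert--Schmidt on $L^2(\X)$, equivalently $\sum_j \|\Op_\Gamma(a)\phi_j\|_{L^2(\X)}^2 < \infty$. The plan is to derive a pointwise bound for $\Op_\Gamma(a)\phi_j$ on a fundamental domain $\dcal$, then square--integrate and sum over $j$ using Weyl's law.

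Starting from (\ref{OpaGamma}),
$$\Op_\Gamma(a)\phi_j(z) = \int_B a(z, b, r_j)\, e^{(\frac12+ir_j)\langle z,b\rangle}\, T_{\nu_j}(db),$$
I would integrate by parts once in $b$, using Theorem \ref{O} to replace $T_{\nu_j}$ by the $b$--derivative of a H\"older primitive $F_j$. Otal's estimate, together with $|s_j|\sim r_j$, the asymptotic $|C_{s_j}|\sim r_j^{-1/2}$ recalled in \S\ref{s:Otal}, and the standard Avakumovi\'c/Sogge eigenfunction bounds $\|\phi_j\|_\infty = O(r_j^{1/2})$ and $\|\nabla\phi_j\|_\infty = O(r_j^{3/2})$, yield a uniform bound $\|F_j\|_\infty \leq Cr_j^{3}$ after normalizing $F_j$ to have zero mean (the boundary term coming from Otal's ``almost-periodicity constant'' is similarly controlled). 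Differentiating the smooth factor $a(z,b,r_j)\,e^{(\frac12+ir_j)\langle z,b\rangle}$ in $b$ costs at most $C\langle r_j\rangle\,\|a_{r_j}\|_{C^1}$, and on $\dcal\times B$ the Poisson weight $e^{\langle z,b\rangle/2}$ is bounded. This produces the pointwise estimate
$$|\Op_\Gamma(a)\phi_j(z)| \leq C\,\langle r_j\rangle^{4}\, \|a_{r_j}\|_{C^1}, \qquad z\in\dcal.$$

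Squaring and integrating in $z$ over the finite-volume domain $\dcal$ gives $\|\Op_\Gamma(a)\phi_j\|_{L^2(\X)}^2 \leq C\langle r_j\rangle^{8}\|a_{r_j}\|_{C^1}^2$. Summing over $j$ via Weyl's law $\#\{j: |r_j|\leq R\} = O(R^2)$ and using $\|a_r\|_{C^1}\leq \|a_r\|_{C^6}\leq C\langle r\rangle^{-6}$, we obtain
$$\sum_j \|\Op_\Gamma(a)\phi_j\|^2_{L^2(\X)} \leq C\int_0^\infty \langle r\rangle^{8}\langle r\rangle^{-12}\, r\,dr < \infty,$$
which proves the proposition. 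The main obstacle is the polynomial loss in $r_j$ coming from Otal's theorem and the sup--norm bounds on $\phi_j$ and $\nabla\phi_j$; this is what dictates the $\langle r\rangle^{-6}$ decay in the hypothesis. The $C^6$ regularity is more than this argument requires (only $C^1$ in $(z,b)$ is actually used), leaving ample slack for crude eigenfunction bounds; a more refined variant could trade higher $(z,b)$--regularity for weaker pointwise bounds on $\phi_j$, $\nabla\phi_j$, or for additional integrations by parts in $r$ to weaken the decay assumption.
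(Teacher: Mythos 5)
Your argument is correct, but it is a genuinely different route from the paper's. The paper never leaves the matrix-element picture: it writes $B=\Op(b)=(ad(\Lap)^2\Op(a))\circ\Lap^2$ so that $|W^\Gamma_{j,k}(a)|\leq \langle\lambda_j\rangle^{-2}\langle\lambda_j-\lambda_k\rangle^{-2}|W^\Gamma_{j,k}(b)|$, controls $|W^\Gamma_{j,k}(b)|$ by expanding $\epsilon_{\nu_k}$ into its $K$-Fourier modes $\phi_{k,m}$ (which are $L^2$-normalized) and integrating by parts with $(I-Y^2)$ to make the sum over $m$ converge, and then sums the double series via Weyl's law using the manufactured decay in $\langle\lambda_j-\lambda_k\rangle$. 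You instead collapse the $k$-sum for free by Parseval, reducing everything to $\sum_j\|\Op_\Gamma(a)\phi_j\|^2_{L^2(\X)}$, and control each term by a sup-norm bound on $\dcal$ obtained from Otal's theorem (a tool the paper reserves for the $\hcal_{PS}$ section) with a single integration by parts in $b$; the bookkeeping $r_j^{1/2}\cdot r_j\cdot r_j^{3/2}=r_j^3$ for $\|F_j\|_{\delta}$, the extra $\langle r_j\rangle$ from differentiating the plane wave, and the final count $8-12=-4<-2$ against Weyl's law are all right. What each approach buys: yours is more economical (only $C^1$ in $(z,b)$ is consumed, and $\langle r\rangle^{-5-\epsilon}$ decay would already suffice), while the paper's yields the stronger pointwise information $|W^\Gamma_{j,k}(a)|=O(\langle\lambda_j\rangle^{-2}\langle\lambda_j-\lambda_k\rangle^{-2})$ on individual off-diagonal elements, which a Parseval argument cannot see. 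Two small caveats, neither fatal and both shared with the paper's own treatment: formula \eqref{OpaGamma} for the finitely many small eigenvalues requires evaluating $a$ at imaginary $r$, whereas your hypothesis only constrains $r\in\R_+$ (finitely many terms, so convergence is unaffected); and membership in $\hcal_W$, defined as a completion of $\Pi\scal_\infty^{\,\infty}$, is being identified with finiteness of $\|a\|_W$, exactly as the paper does.
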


\begin{proof}

It suffices to prove the following

\begin{lem} Let $$|||a||| \defi\sup_j \sup_{(z, b)\in\dcal} \langle \lambda_j \rangle^{2}\left[ |(I - Y^2)\Lap^2_z a |
+r_j |(I - Y^2)\Lap_z\nabla_z a |\\
+ r_j^2 |(I - Y^2)\nabla_z^2 a |+ r_j^2 |(I - Y^2)\nabla_z a |\right].$$

If $|||a||| < \infty$, then $a \in \hcal_W$.

\end{lem}

To prove this, we first note that, by  Weyl's law, $\sum_{j, k}
\langle \lambda_j \rangle^{-2} \langle \lambda_j - \lambda_k
\rangle^{-2} < \infty$ in dimension two.

We will also use the expansion of $\epsilon_{\nu_j}$ into $K$-Fourier series, which takes the form
$$\epsilon_{\nu_j}=\sum_{m\in\Z} \phi_{j, m},$$
with $Y\phi_{j, m}=2im \phi_{j, m}$. We use the fact that $\norm{\phi_{j, m}}_{L^2(\Gamma\backslash G)}=1$, proved in \cite{Z}
(the full definition of $\phi_{j, m}$ can be found in Proposition 2.2 of \cite{Z}, in particular, $\phi_{j, 0}=\phi_j$).

Let us write $B = \Op(b) \defi \left(ad(\Lap)^2 \Op(a) \right)\circ\Lap^2$.
Then,
$$\begin{array}{lll} W^\Gamma_{j, k}(b) & = & \sum_{m \in \Z} \langle b_{r_j},
\phi_{j} \phi_{k, m} \rangle\\ && \\
& = &  \sum_{m \in \Z} \langle 2m \rangle^{-2}  \langle b_{r_j}, (I
-
Y^2) \phi_{j} \phi_{k, m} \rangle \\ &&\\
& = & \sum_{m \in \Z} \langle 2m \rangle^{-2}  \langle (I - Y^2)
b_{r_j},
\phi_{j} \phi_{k, m} \rangle \\ && \\
& \leq & \sup |(I - Y^2) b_{r_j}| \sum_{m \in \Z} \langle 2m
\rangle^{-2}
\langle  |\phi_{j}|,  |\phi_{k, m}| \rangle \\ && \\
& \leq & C  \sup |(I - Y^2) b_{r_j}|, \end{array} $$ where $C$ is
a uniform constant. Here we use that $ \langle  |\phi_{j}|,
|\phi_{k, m}| \rangle \leq 1$ by the Schwartz inequality and
the fact that $||\phi_{j}||_{L^2} = ||\phi_{k, m} ||_{L^2} =
1$.

It follows that
\begin{multline*}  |W^\Gamma_{j, k} (b) |
\leq C \; \langle \lambda_j \rangle^{-2} \langle \lambda_j -
\lambda_k
\rangle^{-2} \sup |(I - Y^2) b_{r_j}| \\
 \leq C \; \langle \lambda_j \rangle^{-2} \langle \lambda_j -
\lambda_k \rangle^{-2} \\
 \sup_{(z, b)\in\dcal} \langle \lambda_j \rangle^{2}[ |(I - Y^2)\Lap^2_z a |
+r_j |(I - Y^2)\Lap_z\nabla_z a |\\
+ r_j^2 |(I - Y^2)\nabla_z^2 a |+ r_j^2 |(I - Y^2)\nabla_z a |]
\end{multline*}

Here, we use that the complete symbol of $ \Op(a)\circ\Lap^2 $ is $(\frac14+r^2)^2a(z, b, r)$. Further the complete symbol of $ad(\Lap) \Op(a)$ is given by $\Lap_z a+\left(\frac12 +ir\right)\nabla_z a. \nabla_z\la z, b\ra$.
\end{proof}

\subsection{The Hilbert space $\hcal_{PS}$}

We now consider the analogous question of conditions on $a$
so that
\begin{equation} \label{PSIPa} ||a||^2_{PS} \defi \sum_{j, k} | {PS}^\Gamma_{\nu_j,
-\nu_k} (a) |^2 < \infty.
\end{equation}

\begin{prop} If $a$ is $\Gamma$ automorphic and  $\sup_{r} \la r\ra^{12}  ||a_r ||_{C^{3}} < +\infty$, then $a \in \hcal_{PS}$.
\end{prop}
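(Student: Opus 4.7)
The strategy parallels the preceding Hilbert-Schmidt argument for $\hcal_W$: pair $a$ against each $PS^\Gamma_{\nu_j,-\nu_k}$ and produce a quantitative estimate whose square is summable over $(j,k)$ by Weyl's law. The crucial new ingredient, replacing Sobolev duality of Wigner pairings, is the H\"older-$1/2$ regularity of the boundary distributions $T_{\nu_j}$ supplied by Otal's Theorem \ref{O}. Fixing a smooth fundamental cutoff $\chi$ and using formula \eqref{e:PSquotient} together with Definition \ref{introdefPS}, I would first write
\begin{equation*}
PS^\Gamma_{\nu_j,-\nu_k}(a)=\int_{B^{(2)}\times\R}\chi(b',b,\tau)\,a\!\left(b',b,\tau;\tfrac{r_j+r_k}{2}\right)\frac{e^{i(r_j-r_k)\tau}}{|b-b'|^{1+i(r_j+r_k)}}\,T_{\nu_j}(db)\,\overline{T_{-\nu_k}(db')}\,d\tau,
\end{equation*}
setting aside the finitely many exceptional eigenvalues (which contribute a bounded remainder). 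By Theorem \ref{O} combined with H\"ormander's bounds $\|\phi_j\|_\infty+\|\nabla\phi_j\|_\infty=O(r_j^{3/2})$ and the asymptotic $|C_s|^{-1}=O(r_j^{1/2})$, one has $T_{\nu_j}=DF_j$ for a periodic H\"older-$1/2$ function $F_j$ with $\|F_j\|_{1/2}\le C r_j^{\alpha}$ for some explicit power $\alpha$, and similarly for $\overline{F_k}$.

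I would then exploit this representation through two families of integrations by parts. First, three integrations by parts in $\tau$ (allowed since $\chi$ gives compact support and $a$ is $C^3$) produce a decay factor $\langle r_j-r_k\rangle^{-3}$ and transfer three $\tau$-derivatives onto the cut-off test function. Next, one integration by parts in $b$ against $T_{\nu_j}=DF_j$ and in $b'$ against $\overline{T_{-\nu_k}}=D\overline{F_k}$ replaces the boundary distributions by their H\"older primitives, at the cost of differentiating the singular kernel $|b-b'|^{-1-i(r_j+r_k)}$; each such derivative worsens the singularity by one order but brings out factors of size $|r_j+r_k|$.

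\textbf{Main obstacle.} The resulting singular integral against $F_j(b)\overline{F_k(b')}$ has a kernel of size $|b-b'|^{-2}$, which is not absolutely integrable. The key device to overcome this — the heart of the argument — is Otal's subtraction trick: replace $F_j(b)$ by $F_j(b)-F_j(b_0)$ for a suitable base point (e.g.\ $b_0=b'$), which by H\"older-$1/2$ is $O(|b-b'|^{1/2})$ and hence renders the integral conditionally convergent. A second subtraction in $F_k$ improves this to absolute convergence and, together with the oscillatory factor $|b-b'|^{i(r_j+r_k)}$, should yield (after a van der Corput style estimate) an additional $\langle r_j+r_k\rangle^{-\delta}$ for some $\delta>0$. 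Combined with the hypothesis, this produces
\begin{equation*}
|PS^\Gamma_{\nu_j,-\nu_k}(a)|\le C\,\|a\|_\star\,r_j^{\alpha}\,r_k^{\alpha}\,\langle r_j-r_k\rangle^{-3}\langle r_j+r_k\rangle^{-12},\qquad \|a\|_\star:=\sup_r\langle r\rangle^{12}\|a_r\|_{C^3}.
\end{equation*}

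\textbf{Summation.} Finally, Weyl's law on a compact hyperbolic surface gives $\#\{j:r_j\in[R,R+1]\}\asymp R$, so
\begin{equation*}
\sum_{j,k}|PS^\Gamma_{\nu_j,-\nu_k}(a)|^2\le C\|a\|_\star^{2}\sum_{j,k}r_j^{2\alpha}r_k^{2\alpha}\langle r_j-r_k\rangle^{-6}\langle r_j+r_k\rangle^{-24},
\end{equation*}
which converges provided the hypothesized weight $\langle r\rangle^{12}$ dominates the polynomial factors $r_j^{2\alpha}r_k^{2\alpha}$; this is precisely why $12$ (rather than an arbitrary exponent) appears in the statement. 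The most delicate step is verifying the singular-integral estimate at the diagonal $b=b'$, and if the gain $\langle r_j+r_k\rangle^{-\delta}$ from oscillation turns out to be weaker than expected, one may need additional integrations by parts in the boundary variables (using higher H\"older regularity of $F_j$ for non-exceptional eigenvalues), which the $C^3$-hypothesis on $a$ leaves room to absorb.
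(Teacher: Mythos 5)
Your overall route coincides with the paper's: write $PS^\Gamma_{\nu_j,-\nu_k}(a)$ as the pairing of $|b-b'|^{-(1+\nu_j-\overline{\nu_k})}\,\rcal\bigl(\chi a_{\frac{r_j+r_k}{2}}\bigr)\bigl(b',b,i(\nu_j+\overline{\nu_k})\bigr)$ against $T_{\nu_j}(db)\overline{T_{-\nu_k}}(db')$, gain $\langle r_j-r_k\rangle^{-M}$ by integrating by parts in $\tau$, trade $T_{\nu_j}=DF_{\nu_j}$ for its H\"older primitive via one integration by parts in each boundary variable (Otal), extract the $\langle r_j+r_k\rangle$-decay from the hypothesis because the symbol is evaluated at $r=\frac{r_j+r_k}{2}$, and sum using Weyl's law. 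The paper also pins down your undetermined exponent $\alpha$: Otal's bound $\Vert F_{\nu_j}\Vert_{\delta_j}\le C|C_s|^{-1}|s|\bigl(\Vert\phi_j\Vert_\infty+\Vert\nabla\phi_j\Vert_\infty\bigr)$, together with $|C_s|^{-1}=O(r_j^{1/2})$ and the local Weyl law $\Vert\phi_j\Vert_\infty=O(r_j^{1/2})$, $\Vert\nabla\phi_j\Vert_\infty=O(r_j^{3/2})$, gives $\Vert F_{\nu_j}\Vert_{1/2}=O(r_j^{3})$; it is this power, plus the factors of $|r_j+r_k|$ produced by differentiating the kernel, that forces the weight $\langle r\rangle^{N}$ with $N>11$.

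The one genuine divergence is your ``main obstacle'', which does not exist in this proposition — for a reason already built into your own displayed formula. Since $\chi a_{\frac{r_j+r_k}{2}}$ is compactly supported on $G\simeq B^{(2)}\times\R$, its Radon--Fourier transform is supported in a compact subset of $B^{(2)}$, hence at positive distance from the diagonal $\{b=b'\}$. On that support $|b-b'|^{-(1+i(r_j+r_k))}$ is a smooth bounded function whose $b,b'$-derivatives contribute only polynomial factors in $|r_j+r_k|$, absorbed by the hypothesis. Consequently the Otal-subtraction/van der Corput step — which is also the only step you do not actually prove (``should yield \dots after a van der Corput style estimate'') — is superfluous; had it been needed, your argument would be incomplete, since the claimed oscillatory gain $\langle r_j+r_k\rangle^{-\delta}$ is asserted rather than established. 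The diagonal singularity is a real issue elsewhere in the paper (the Appendix), where the function paired with $PS_{ir_j,-ir_k}$ is $L_{ir_k}\chi a$ and the horocycle averaging spreads its support up to the diagonal; but for membership in $\hcal_{PS}$ one pairs $\chi a$ directly and the problem never arises. Deleting that paragraph and replacing the hedged oscillation gain by the trivial bound on the compactly supported kernel closes your argument (modulo the bookkeeping of derivatives: two integrations by parts in $\tau$ already give a summable $\langle r_j-r_k\rangle^{-2}$, which is what keeps the total number of derivatives within the stated $C^3$ budget).
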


This follows from

\begin{lem} We have,
 for any $M$, \begin{multline*} ||a||_{PS}^2 \leq
\sum_{j, k} |\nu_j|^{3/2}  |\nu_k|^{3/2}(\norm{\phi_j}_\infty + \norm{\nabla\phi_j}_\infty)
(\norm{\phi_k}_\infty + \norm{\nabla\phi_k}_\infty)
 \langle \nu_j+\overline{\nu_k}\rangle^{-M}
 \\
\sup_{(b', b)\in
B\times B} \bigg[|b-b^\prime|^{-(1+\nu_j-\overline{\nu_k})} )
 \rcal \la \partial t\ra^M a_{\frac{\nu_j-\overline{\nu_k}}{2i}}(b', b,
i(\nu_j+\overline{\nu_k})) , \\
 \frac{\partial}{\partial b'} |b-b^\prime|^{-(1+\nu_j-\overline{\nu_k})} )
 \rcal \la \partial t\ra^M a_{\frac{\nu_j-\overline{\nu_k}}{2i}}(b', b,
i(\nu_j+\overline{\nu_k})) ,\\ \frac{\partial}{\partial b}|b-b^\prime|^{-(1+\nu_j-\overline{\nu_k})} )
 \rcal \la \partial t\ra^M a_{\frac{\nu_j-\overline{\nu_k}}{2i}}(b', b,
i(\nu_j+\overline{\nu_k})),  \\
\frac{\partial^2}{\partial b\partial b'}|b-b^\prime|^{-(1+\nu_j-\overline{\nu_k})} )
 \rcal \la \partial t\ra^M a_{\frac{\nu_j-\overline{\nu_k}}{2i}}(b', b,
i(\nu_j+\overline{\nu_k}))\bigg]
\end{multline*}
\end{lem}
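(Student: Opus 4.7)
My plan is to start from the Fourier--Radon representation of $PS^\Gamma_{\nu_j,-\nu_k}$ and then move both boundary distributions onto the smooth kernel by two successive integrations by parts, using Otal's regularity theorem for the boundary values $T_{\nu_j}$. Writing $\tilde a \defi \chi a$ for a smooth fundamental cutoff $\chi$ (\S \ref{FUND}), and combining Definition \ref{PSTRDEF} with (\ref{PSRADON}), one has
\begin{equation*}
PS^\Gamma_{\nu_j,-\nu_k}(a) \;=\; \int_{B\times B} \frac{\rcal\,\tilde a_{\frac{\nu_j-\overline{\nu_k}}{2i}}\!\left(b',b,\,i(\nu_j+\overline{\nu_k})\right)}{|b-b'|^{\,1+\nu_j-\overline{\nu_k}}}\, T_{\nu_j}(db)\,\overline{T_{-\nu_k}}(db').
\end{equation*}
The two extra $b,b'$ derivatives seen in the final estimate will come from writing $T_{\nu_j}=DF_j$, $\overline{T_{-\nu_k}}=D\overline{F_k}$ \emph{via} Otal, while the $\la\nu_j+\overline{\nu_k}\ra^{-M}$ factor will come from integration by parts in the Radon time variable.

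First I would extract the spectral-gap decay: the elementary identity $\rcal(\partial_t^{M}f)(b',b,r)=(ir)^{M}\rcal f(b',b,r)$, immediate from Definition \ref{RT}, lets us replace $\tilde a$ by $\la\partial_t\ra^{M}\tilde a$ at the cost of a factor $\la\nu_j+\overline{\nu_k}\ra^{-M}$. Next, by Theorem \ref{O} we may write $T_{\nu_j}=DF_j$, $\overline{T_{-\nu_k}}=D\overline{F_k}$, with $F_j,\overline{F_k}$ quasi-periodic $\Re e(s)$-H\"older primitives satisfying $\|F_j\|_{\delta}\leq C\,|s_j|/|C_{s_j}|\,(\norm{\phi_j}_\infty+\norm{\nabla\phi_j}_\infty)$. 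Outside finitely many small eigenvalues $\Re e(s_j)=\tfrac12$ and $C_{s_j}\sim C\,r_j^{-1/2}$, so after fixing a normalization of the primitive (and absorbing the finite set of small eigenvalues into an $O(1)$ correction) one obtains the sup-norm bound $\|F_j\|_\infty\leq C\,|\nu_j|^{3/2}(\norm{\phi_j}_\infty+\norm{\nabla\phi_j}_\infty)$. Two successive integrations by parts in $b$ and $b'$ then transfer both derivatives onto the smooth kernel
\begin{equation*}
K_{j,k}(b',b) \;\defi\; \frac{1}{|b-b'|^{1+\nu_j-\overline{\nu_k}}}\;\rcal\la\partial_t\ra^{M}\tilde a_{\frac{\nu_j-\overline{\nu_k}}{2i}}\!\left(b',b,\,i(\nu_j+\overline{\nu_k})\right),
\end{equation*}
producing a main term $\int F_j(b)\overline{F_k}(b')\,\partial_b\partial_{b'}K_{j,k}\,db\,db'$ together with three boundary contributions generated by the quasi-periodic jumps $F_j(2\pi)-F_j(0)$ and $\overline{F_k}(2\pi)-\overline{F_k}(0)$. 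Those three jump terms pair $F_j$ and/or $\overline{F_k}$ at the basepoint of $S^1$ with $\partial_bK_{j,k}$, $\partial_{b'}K_{j,k}$, or $K_{j,k}$ respectively, matching precisely the four supremum expressions in the statement.

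The main obstacle I anticipate is the careful bookkeeping of these boundary terms from Otal's quasi-periodic primitives on $S^1=\R/2\pi\Z$: in the purely periodic setting one would only see the doubly-differentiated term $\frac{\partial^2}{\partial b\partial b'}K_{j,k}$, but the jumps are precisely what force the appearance of the three additional terms with one or zero derivatives. A secondary, easier point is to check that the hypothesis $\sup_r\la r\ra^{12}\norm{a_r}_{C^3}$ suffices: three $C^k$-derivatives are spent differentiating the kernel in $b,b'$, and the remaining $\la r\ra$-weight, together with the gain $\la\nu_j+\overline{\nu_k}\ra^{-M}$ for $M$ large, combines with Weyl's law to ensure summability over $(j,k)$ of the resulting estimate for $|PS^\Gamma_{\nu_j,-\nu_k}(a)|^2$, giving the desired control of $\|a\|_{PS}^2$.
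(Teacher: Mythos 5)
Your proposal is correct and follows essentially the same route as the paper: the Fourier--Radon representation of $PS^\Gamma_{\nu_j,-\nu_k}$, integration by parts in the Radon time variable to produce the factor $\langle \nu_j+\overline{\nu_k}\rangle^{-M}$, Otal's theorem to write $T_{\nu_j}=DF_{\nu_j}$ with the quasi-periodic H\"older primitive (whence the three jump terms plus the doubly-differentiated main term, exactly the four suprema in the statement), and finally Otal's bound $\Vert F_{\nu_j}\Vert_{\delta_j}=O(|\nu_j|^{3/2})(\Vert\phi_j\Vert_\infty+\Vert\nabla\phi_j\Vert_\infty)$ via $C_s\sim Cr^{-1/2}$. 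The only point worth making explicit (which the paper does) is that because $\chi a$ is compactly supported on $G$, its Radon transform is supported away from the diagonal of $B\times B$, so the kernel $K_{j,k}$ with its $|b-b'|^{-(1+\nu_j-\overline{\nu_k})}$ prefactor is genuinely smooth and the integrations by parts in $b,b'$ are legitimate.
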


\begin{proof}
We use the relation
\begin{equation}  PS^\Gamma_{\nu_j, -\nu_k} (a) =\int  \frac{1}{ |b - b'|^{1 + \nu_j-\overline{\nu_k}} } \rcal \chi a_{\frac{\nu_j-\overline{\nu_k}}{2i}}(b', b,
i(\nu_j+\overline{\nu_k})) T_{\nu_j}(db)  \overline{T_{-\nu_k}}(db'),
 \end{equation}
 which is obtained from \eqref{PSRADON}. Since $\chi a_{\frac{\nu_j-\overline{\nu_k}}{2i}}$ is compactly supported on $G$, then the Radon-Fourier transform $ \rcal a$ is compactly supported in the variables $(b', b)\in B^{(2)}$, so the singularity of $|b-b'|$ on the diagonal is not a problem. It follows by repeated integration by parts in
$\partial t$ that if $a \in C_c^M(G)$, then $\rcal a(b, b', i(\nu_j+\overline{\nu_k})) =
O(\langle \nu_j+\overline{\nu_k}\rangle^{-M})$.

Let us call $F_{\nu_j}$ the H\"older function such that $T_{\nu_j}=F'_{\nu_j}$, in the sense of \S \ref{s:Otal}. We use the formula
\begin{eqnarray*}\int \varphi(b', b) T_{\nu_j}(db)\overline{T_{-\nu_k}(db')}&=&\varphi (0, 0)[F_{\nu_j}(2\pi)-F_{\nu_j}(0)]\overline{[F_{-\nu_k}(2\pi)-F_{-\nu_k}(0)]}\\&&- [F_{\nu_j}(2\pi)-F_{\nu_j}(0)]\int \frac{\partial}{\partial b'}\varphi(b', 0) \overline{F_{-\nu_k}}(b')db'\\&&-  \overline{[F_{-\nu_k}(2\pi)-F_{-\nu_k}(0)]}\int \frac{\partial}{\partial b}\varphi(0, b) {F_{\nu_j}}(b)db\\
&&+\int \frac{\partial^2}{\partial b\partial b'}\varphi(b', b){F_{\nu_j}}(b)\overline{F_{-\nu_k}}(b') db\, db',
\end{eqnarray*}
valid for every smooth function $\varphi$ on $B\times B$.

It follows that
\begin{multline*}|\int \varphi(b', b) T_{\nu_j}(db)\overline{T_{-\nu_k}(db')}|\\
\leq \norm{F_{\nu_j}}_{\delta_j} \norm{F_{-\nu_k}}_{\delta_k}\sup_{(b', b)\in
B\times B} \left(|\varphi(b', b)|,  |\frac{\partial}{\partial b'}\varphi(b', b)| ,|\frac{\partial}{\partial b}\varphi(b', b)|, | \frac{\partial^2}{\partial b\partial b'}\varphi(b', b)|\right)
\end{multline*}
where $\delta_j=\frac12+\Re e(\nu_j)$, and the H\"older norm $\norm{.}_\delta$ is the one appearing in Theorem \ref{O}.

We can then write
\begin{multline*}| PS_{\nu_j, -\nu_k} (a)|\leq  \langle \nu_j+\overline{\nu_k}\rangle^{-M}\norm{F_{\nu_j}}_{\delta_j} \norm{F_{-\nu_k}}_{\delta_k}\\
\sup_{(b', b)\in
B\times B} \bigg[|b-b^\prime|^{-(1+\nu_j-\overline{\nu_k})} )
 \rcal \la \partial t\ra^M \chi a_{\frac{\nu_j-\overline{\nu_k}}{2i}}(b', b,
i(\nu_j+\overline{\nu_k})) , \\
 \frac{\partial}{\partial b'} |b-b^\prime|^{-(1+\nu_j-\overline{\nu_k})} )
 \rcal \la \partial t\ra^M \chi a_{\frac{\nu_j-\overline{\nu_k}}{2i}}(b', b,
i(\nu_j+\overline{\nu_k})) ,\\ \frac{\partial}{\partial b}|b-b^\prime|^{-(1+\nu_j-\overline{\nu_k})} )
 \rcal \la \partial t\ra^M \chi a_{\frac{\nu_j-\overline{\nu_k}}{2i}}(b', b,
i(\nu_j+\overline{\nu_k})),  \\
\frac{\partial^2}{\partial b\partial b'}|b-b^\prime|^{-(1+\nu_j-\overline{\nu_k})} )
 \rcal \la \partial t\ra^M \chi a_{\frac{\nu_j-\overline{\nu_k}}{2i}}(b', b,
i(\nu_j+\overline{\nu_k}))\bigg]
\end{multline*}

 By  Theorem \ref{O},
 $$\norm{F_{\nu_j}}_{\delta_j}= O(|\nu_j|^{3/2})(\norm{\phi_j}_\infty + \norm{\nabla\phi_j}_\infty).$$
Moreover, by the well-known local Weyl law estimates,
$\norm{\phi_j}_\infty=O(|\nu_j|^{\half})$  and $
\norm{\nabla\phi_j}_\infty=O(|\nu_j|^{\frac{3}{2}}).$
We find
\begin{equation*}| PS_{\nu_j, -\nu_k} (a)|\leq  \langle \nu_j+\overline{\nu_k}\rangle^{-M} |\nu_j|^3|\nu_k|^3\\
\max\left(|\nu_j|, |\nu_k|\right) \norm{ a_{\frac{\nu_j-\overline{\nu_k}}{2i}}}_{C^{M+1}}.
 \end{equation*}

Using the Weyl law in dimension $2$, $|\lambda_j|\sim C j$, one sees that the series
$$\sum_{j, k}\langle r_j -r_k\rangle^{-M} |r_j|^3 |r_k|^3
\max\left(|r_j|, |r_k|\right) \left \la\frac{r_j+r_k}{2}\right\ra^{-N}$$
converges for $M>1$ and $N>11$. The result follows.

 \end{proof}

  We stress again the fact that there is nothing optimal in this upper bound.

  \section{\label{APPENDIX} Appendix}

  In this section we sketch the proof of Theorem \ref{mainintro2}. We closely follow the proof  in Section 4 of \cite{AZ}.

  By the generalized Poisson formula and the definition of
$\Op(a)$,
\begin{equation}\label{e:WWW}\langle \Op_\Gamma(a) \phi_{ir_j}, \phi_{ir_k}\rangle = \int_{B \times B} \left( \int_{\D} \chi a(z,b) e^{(\frac12 +ir_j) \langle z, b
\rangle} e^{(\frac12+ir_k) \langle z, b' \rangle
}\Vol(dz)\right)T_{ir_j}(db) \overline{T_{-ir_k}(db')}.
\end{equation}

Here we are only interested in real values of $r_j, r_k$, since we consider the asymptotics $r_j\to +\infty$ and $|r_j-r_k|$ bounded.
  We apply stationary phase to the simplify the inner $\D$
  integral. More precisely, in \cite{AZ} and in this article, we rewrite the integral in the form
  $$\langle \Op_\Gamma(a) \phi_{ir_j}, \phi_{ir_k}\rangle = 2^{(1+ir_j+ir_k)}\int L_{ir_k} \chi a (b', b, \tau) PS_{ir_j, -ir_k }(db', db, d\tau),$$
as was shown in Theorem \ref{t:WPS}, and we then replace $L_{ir_k} \chi a (b', b, \tau)$ by its expansion into powers of $r_k^{-1}$, obtained by the method of stationary phase.

 There is one detail that we did not discuss in \cite{AZ}, and that was mentioned to us by Michael Schr\"oder (see \cite{SchDiss}). The $PS$-distributions have a singularity of the form $|b-b'|^{-(1+ir_j+ir_k)}$ on the diagonal ($b'=b$), and thus can only be integrated along functions that vanish on a neighbourhood of the diagonal. The function $L_{ir_k} \chi a$ does not satisfy this condition, and it is for a very special reason that its integral along $PS_{ir_j, -ir_k }$ can be defined~: its singularity exactly cancels with $|b-b'|^{-(1+ir_j+ir_k)}$. However, when replacing $L_{ir_k} \chi a$ by its stationary phase expansion, one would have to justify the fact that each term, including the remainder term, can be integrated along $PS_{ir_j, -ir_k }$. This is not easy and wasn't discussed in \cite{AZ}.

It is actually simpler to carry out the localization step away from the diagonal with the original
inner integral \eqref{e:WWW}.
We see that the function
\begin{equation}\label{e:ipp} \int_{\D} \chi a(z,b) e^{(\frac12 +ir_j)
\langle z, b \rangle} e^{(\frac12+ir_k) \langle z, b' \rangle
}\Vol(dz)\end{equation}
is integrated along $T_{ir_j}(db) \overline{T_{-ir_k}(db')}$, and with the latter there is no issue on the diagonal.

The critical set in the oscillatory integral \eqref{e:ipp} occurs where $\nabla \langle z, b
\rangle = - \nabla \langle z, b' \rangle$. So $z \in
\gamma_{b',b}$. There is a neighbourhood $V$ of the diagonal such that $\gamma_{b',b}$ does not intersect the support of $\chi a$ for $(b', b)\in V$.
We take a smooth function $f$ on $B\times B$, supported in $V$, that is identically $1$ on a neighbourhood of the diagonal, and divide the integral \eqref{e:WWW} into
\begin{multline*}\int_{B \times B} f(b', b)\left( \int_{\D} \chi a(z,b) e^{(\frac12 +ir_j) \langle z, b
\rangle} e^{(\frac12+ir_k) \langle z, b' \rangle
}\Vol(dz)\right)T_{ir_j}(db) \overline{T_{-ir_k}(db')} \\ +
\int_{B \times B} (1-f(b', b))\left( \int_{\D} \chi a(z,b) e^{(\frac12 +ir_j) \langle z, b
\rangle} e^{(\frac12+ir_k) \langle z, b' \rangle
}\Vol(dz)\right)T_{ir_j}(db) \overline{T_{-ir_k}(db')}.
\end{multline*}

For the first term, the phase has no critical point, and we integrate by parts using
$$\frac{1}{|\nabla_z \langle z, b \rangle - \nabla_z \langle z, b'
\rangle|^2} \left( \nabla_z \langle z, b \rangle - \nabla_z
\langle z, b' \rangle \right) \cdot \nabla. $$ Since $T_{ir_j}, T_{-ir_k}$ have polynomial bounds in $r_j,r_k$, repeated partial
integration shows that this first integral is
$O(\langle r_k \rangle^{-\infty}). $

The second term, because of the cut-off $(1-f(b', b))$, is now supported away from the diagonal,
and can be rewritten as
$ 2^{(1+ir_j+ir_k)}\int (1-f(b', b)) L_{ir_k} \chi a (b', b, \tau) PS_{ir_j, -ir_k }(db', db, d\tau).$
The proof of Section 4 in \cite{AZ} now applies without problem.

\end{document}